\def\tk{\widetilde k} 
\def\tm{\widetilde m} 
\def\tn{\widetilde n} 
\def\tu{\widetilde u}
\begin{document}

\newcommand{\bs}{\boldsymbol}
\def \a{\alpha} \def \b{\beta} \def \d{\delta} \def \e{\varepsilon} \def \g{\gamma} \def \k{\kappa} \def \l{\lambda} \def \s{\sigma} \def \t{\theta} \def \z{\zeta}

\newcommand{\mb}{\mathbb}

\newtheorem{theorem}{Theorem}
\newtheorem{lemma}[theorem]{Lemma}
\newtheorem{claim}[theorem]{Claim}
\newtheorem{corollary}[theorem]{Corollary}
\newtheorem{conj}[theorem]{Conjecture}
\newtheorem{prop}[theorem]{Proposition}
\newtheorem{definition}[theorem]{Definition}
\newtheorem{question}[theorem]{Question}
\newtheorem{example}[theorem]{Example}
\newcommand{\hh}{{{\mathrm h}}}
\newtheorem{remark}[theorem]{Remark}

\numberwithin{equation}{section}
\numberwithin{theorem}{section}
\numberwithin{table}{section}
\numberwithin{figure}{section}

\def\sssum{\mathop{\sum\!\sum\!\sum}}
\def\ssum{\mathop{\sum\ldots \sum}}

\newcommand{\diam}{\operatorname{diam}}

\def\squareforqed{\hbox{\rlap{$\sqcap$}$\sqcup$}}
\def\qed{\ifmmode\squareforqed\else{\unskip\nobreak\hfil
\penalty50\hskip1em \nobreak\hfil\squareforqed
\parfillskip=0pt\finalhyphendemerits=0\endgraf}\fi}

\newfont{\teneufm}{eufm10}
\newfont{\seveneufm}{eufm7}
\newfont{\fiveeufm}{eufm5}
%
%
\newfam\eufmfam
     \textfont\eufmfam=\teneufm
\scriptfont\eufmfam=\seveneufm
     \scriptscriptfont\eufmfam=\fiveeufm
%
%
\def\frak#1{{\fam\eufmfam\relax#1}}

\newcommand{\bflambda}{{\boldsymbol{\lambda}}}
\newcommand{\bfmu}{{\boldsymbol{\mu}}}
\newcommand{\bfxi}{{\boldsymbol{\eta}}}
\newcommand{\bfrho}{{\boldsymbol{\rho}}}

\def\eps{\varepsilon}

\def\sssum{\mathop{\sum\!\sum\!\sum}}
\def\ssum{\mathop{\sum\ldots \sum}}
\def\dsum{\mathop{\quad \sum \qquad \sum}}

\def\T {\mathsf {T}}
\def\Tor{\mathsf{T}_d}
\def\Tore{\widetilde{\mathrm{T}}_{d} }

\def\sM {\mathsf {M}}
\def\sL {\mathsf {L}}
\def\sK {\mathsf {K}}
\def\sP {\mathsf {P}}

\def\ss{\mathsf {s}}

\def \balpha{\bm{\alpha}}
\def \bbeta{\bm{\beta}}
\def \bgamma{\bm{\gamma}}
\def \bdelta{\bm{\delta}}
\def \bzeta{\bm{\zeta}}
\def \blambda{\bm{\lambda}}
\def \bchi{\bm{\chi}}
\def \bphi{\bm{\varphi}}
\def \bpsi{\bm{\psi}}
\def \bxi{\bm{\xi}}
\def \bmu{\bm{\mu}}
\def \bnu{\bm{\nu}}
\def \bomega{\bm{\omega}}

\def \ba{\mathbf{a}}
\def \bb{\mathbf{b}}

\def \bell{\bm{\ell}}

\def\eqref#1{(\ref{#1})}

\def\vec#1{\mathbf{#1}}

\def\rf#1{\left\lceil#1\right\rceil}

\newcommand{\abs}[1]{\left| #1 \right|}

\def\Zq{\mathbb{Z}_q}
\def\Zqx{\mathbb{Z}_q^*}
\def\Zd{\mathbb{Z}_d}
\def\Zdx{\mathbb{Z}_d^*}
\def\Zf{\mathbb{Z}_f}
\def\Zfx{\mathbb{Z}_f^*}
\def\Zp{\mathbb{Z}_p}
\def\Zpx{\mathbb{Z}_p^*}
\def\cM{\mathcal M}
\def\cE{\mathcal E}
\def\cH{\mathcal H}

\def\le{\leqslant}
\def\leq{\leqslant}
\def\ge{\geqslant}
\def\geq{\geqslant}

\def\sfB{\mathsf {B}}
\def\sfC{\mathsf {C}}
\def\sfS{\mathsf {S}}
\def\sfI{\mathsf {I}}
\def\sfT{\mathsf {T}}
\def\L{\mathsf {L}}
\def\FF{\mathsf {F}}

\def\sE {\mathscr{E}}
\def\sS {\mathscr{S}}

\def\cA{{\mathcal A}}
\def\cB{{\mathcal B}}
\def\cC{{\mathcal C}}
\def\cD{{\mathcal D}}
\def\cE{{\mathcal E}}
\def\cF{{\mathcal F}}
\def\cG{{\mathcal G}}
\def\cH{{\mathcal H}}
\def\cI{{\mathcal I}}
\def\cJ{{\mathcal J}}
\def\cK{{\mathcal K}}
\def\cL{{\mathcal L}}
\def\cM{{\mathcal M}}
\def\cN{{\mathcal N}}
\def\cO{{\mathcal O}}
\def\cP{{\mathcal P}}
\def\cQ{{\mathcal Q}}
\def\cR{{\mathcal R}}
\def\cS{{\mathcal S}}
\def\cT{{\mathcal T}}
\def\cU{{\mathcal U}}
\def\cV{{\mathcal V}}
\def\cW{{\mathcal W}}
\def\cX{{\mathcal X}}
\def\cY{{\mathcal Y}}
\def\cZ{{\mathcal Z}}
\newcommand{\rmod}[1]{\: \mbox{mod} \: #1}

\def\cg{{\mathcal g}}

\def\vy{\mathbf y}
\def\vr{\mathbf r}
\def\vx{\mathbf x}
\def\va{\mathbf a}
\def\vb{\mathbf b}
\def\vc{\mathbf c}
\def\ve{\mathbf e}
\def\vf{\mathbf f}
\def\vg{\mathbf g}
\def\vh{\mathbf h}
\def\vk{\mathbf k}
\def\vm{\mathbf m}
\def\vz{\mathbf z}
\def\vu{\mathbf u}
\def\vv{\mathbf v}

\newcommand{\fa}{\mathfrak{a}}
\newcommand{\fb}{\mathfrak{b}}
\newcommand{\fc}{\mathfrak{c}}
\newcommand{\fl}{\mathfrak{l}}
\newcommand{\fm}{\mathfrak{m}}
\newcommand{\fn}{\mathfrak{n}}
\newcommand{\fq}{\mathfrak{q}}
\newcommand{\fA}{\mathfrak{A}}
\newcommand{\fB}{\mathfrak{B}}
\newcommand{\fH}{\mathfrak{H}}
\newcommand{\fJ}{\mathfrak{J}}
\newcommand{\fK}{\mathfrak{K}}
\newcommand{\fL}{\mathfrak{L}}
\newcommand{\fR}{\mathfrak{R}}
\newcommand{\fQ}{\mathfrak{Q}}
\newcommand{\fS}{\mathfrak{S}}
\newcommand{\fT}{\mathfrak{T}}
\newcommand{\fZ}{\mathfrak{Z}}

\def\e{{\mathbf{\,e}}}
\def\ep{{\mathbf{\,e}}_p}
\def\eq{{\mathbf{\,e}}_q}
\def\er{{\mathbf{\,e}}_r}
\def\es{{\mathbf{\,e}}_s}

 \def\SS{{\mathbf{S}}}

 \def\0{{\mathbf{0}}}
 
 \newcommand{\GL}{\operatorname{GL}}
\newcommand{\SL}{\operatorname{SL}}
\newcommand{\lcm}{\operatorname{lcm}}
\newcommand{\ord}{\operatorname{ord}}
\newcommand{\Tr}{\operatorname{Tr}}
\newcommand{\Span}{\operatorname{Span}}

\def\({\left(}
\def\){\right)}
\def\l|{\left|}
\def\r|{\right|}
\def\floor#1{\left\lfloor#1\right\rfloor}
\def\sumstar#1{\mathop{\sum\vphantom|^{\!\!*}\,}_{#1}}

\def\mand{\qquad \mbox{and} \qquad}



\hyphenation{re-pub-lished}

\mathsurround=1pt

\def\bfdefault{b}

\def \F{{\mathbb F}}
\def \K{{\mathbb K}}
\def \N{{\mathbb N}}
\def \Z{{\mathbb Z}}
\def \P{{\mathbb P}}
\def \Q{{\mathbb Q}}
\def \R{{\mathbb R}}
\def \C{{\mathbb C}}
\def\Fp{\F_p}
\def \fp{\Fp^*}

\newcommand{\ud}{\mathrm{d}}
\newcommand{\ue}{\mathrm{e}}

 \def \xbar{\overline x}

\title{Estimates for trilinear and quadrilinear character sums}

\author[\'E. Fouvry] {\'Etienne Fouvry}
\address{D{\'e}partement de Math{\'e}matiques, Universit{\'e} Paris-Saclay,
91405 Orsay Cedex, 
France}
\email{etienne.fouvry@universite-paris-saclay.fr}

\author[I. E. Shparlinski] {Igor E. Shparlinski}
\address{School of Mathematics and Statistics, University of New South Wales, Sydney NSW 2052, Australia}
\email{igor.shparlinski@unsw.edu.au}

\author[P. Xi]{Ping Xi}

\address{School of Mathematics and Statistics, Xi'an Jiaotong University, Xi'an 710049, P. R. China}
\email{ping.xi@xjtu.edu.cn}

\date{\today}

\begin{abstract} 
We obtain new bounds on some trilinear and quadrilinear character sums, which 
are non-trivial starting from very short ranges of the variables. 
 An  application to an apparently new problem on oscillations
of characters on differences between Farey fractions is given. 
Other applications include
a modular analogue of a multiplicative hybrid problem of Iwaniec and S{\'a}rk{\"o}zy~(1987)
and the solvability of some prime type equations with constraints.

\end{abstract}

\subjclass[2020]{11L40}

\keywords{character sums, trilinear forms, quadrilinear forms}

\maketitle

\tableofcontents

\section{Introduction and backgrounds}
\subsection{Set-up}
Motivated by various applications to analytic number theory, estimates for character sums received a lot of attention in the past decades. In many situations, the variables are supported over some additively structured sets, such as  sets of consecutive integers. But the difficulty can vary significantly since the weights might be arbitrary.
This paper aims to study two kinds of multilinear character sums with 
arbitrary weights. 

Throughout this paper, denote by $p$ a large  prime, and by $\chi$ a non-trivial multiplicative character modulo $p$. Take two integers $a,b$ with $p\nmid ab$.
For $K,L,M,N\geqslant1$, we consider the trilinear character sum
 \begin{equation}\label{eq:trilinearcharactersum}
\fT(\balpha,\bbeta)
= \sum_{k\leqslant K} \sum_{m\leqslant M}\sum_{n\leqslant N}\alpha_m\beta_{k,n}\chi(ak+bmn)
\end{equation}
and the quadrilinear sum
 \begin{equation}\label{eq:quadrilinearcharactersum}
\fQ(\boldsymbol\alpha,\boldsymbol\beta)
=\sum_{k\leqslant K}\sum_{\ell\leqslant L} \sum_{m\leqslant M}\sum_{n\leqslant N} \alpha_{\ell, m}\beta_{k,n}\chi(ak \ell + bmn),
\end{equation}
where $\balpha=(\alpha_m)$ or $(\alpha_{\ell,m})$ and $\bbeta=(\beta_{k,n})$ are some complex weights.  Note that one obtains~\eqref{eq:trilinearcharactersum} if taking $L=1$ and $\alpha_{1,m}=\alpha_m$ 
in~\eqref{eq:quadrilinearcharactersum}. No confusion on the definitions of $\balpha$ arises since one is for the trilinear sum, and the other is for the quadrilinear one.

In various practical applications, one aims to obtain upper bounds, as strong as possible, for $\fT(\balpha,\bbeta)$ and $\fQ(\balpha,\bbeta)$ in terms of the $\ell_\rho$-norms $\|\balpha\|_\rho$ and $\|\bbeta\|_\rho$ (see~\eqref{eq:norm} below for the definition of norms).
We refer to the following inequalities
$$
|\fT(\balpha,\bbeta)|\leqslant \min\{\|\balpha\|_\infty\|\bbeta\|_\infty KMN,~  \|\balpha\|_2\|\bbeta\|_2 (KMN)^{\frac{1}{2}}\}
$$
and
$$
|\fQ(\balpha,\bbeta)|\leqslant \min\{\|\balpha\|_\infty\|\bbeta\|_\infty KLMN,~  \|\balpha\|_2\|\bbeta\|_2 (KLMN)^{\frac{1}{2}}\}
$$
as trivial bounds, which follow directly from the triangle inequality and the 
Cauchy--Schwarz inequality.
Our aim is to beat the above trivial bounds for 
$\balpha, \bbeta$ as general as possible and $K,L,M,N$ as small as possible compared to $p$.

We would like to emphasize that weights in $\fQ(\balpha,\bbeta)$ depend on variables from different products, which makes treatments of such sums much more difficult, as the standard smoothing technique  does not  immediately apply.

Our bounds for $\fT(\balpha,\bbeta)$ and $\fQ(\balpha,\bbeta)$ with power-savings, as well as applications, will be given in Section~\ref{sec:main}, and we would like to give a brief 
outline of related results right now.

\subsection{Related bilinear sums}
\label{sec:surv B}

The above studies on character sums over sumsets can be dated back to
Vinogradov~\cite[Chapter~V, Exercise~8.c]{Vi54} on the following bilinear form
\begin{equation}\label{eq:bilinearform-Vinogradov}
\fB(\balpha,\bbeta) = \sum_{m\in \cM} \sum_{n \in \cN} \alpha_m\beta_n\chi(m + n),
\end{equation}
where $\chi$ is a non-trivial Dirichlet character modulo $p$ and $\cM,\cN\subseteq[1,p]$ are arbitrary subsets with $M=\#\cM$ and $N=\#\cN$. A direct application of Fourier techniques yields
\begin{equation}\label{eq:Vinogradov}
|\fB(\balpha,\bbeta)| \le  \|\balpha\|_2\|\bbeta\|_2~ p^{\frac{1}{2}}.
\end{equation}
Although this bound is widely known, its full derivation is not easy to 
find in the literature, however it can be found as a short proof 
of~\cite[Equation~(1.4)]{SS17}.  
Despite a very short and elementary proof, the bound~\eqref{eq:Vinogradov}  has 
never been  improved in full generality. 
However,  Karatsuba~\cite{Ka92} (see also~\cite[Chapter~VIII, Problem~9]{Ka93}) 
has proved, that if
\begin{equation}\label{eq:PV-threshold}
M>p^{\frac{1}{2}+\eta} \mand  N>p^\eta
\end{equation}
for some $\eta>0$, then the inequality
\begin{equation}\label{eq:Karatsuba-implicit}
|\fB(\balpha,\bbeta)| \le  \|\balpha\|_\infty\|\bbeta\|_\infty MNp^{-\kappa}
\end{equation}
holds with some $\kappa>0$, depending only on $\eta$. The range~\eqref{eq:PV-threshold} reveals the P\'olya--Vinogradov threshold even when summing over arbitrary subsets. A similar phenomenon can also be found in~\cite{Xi23}.
We note that the proof of~\eqref{eq:Karatsuba-implicit}, in the range~\eqref{eq:PV-threshold},
does not seem to be 
present in literature, and a concise proof with an explicit upper bound, is provided in Appendix~\ref{appendix:Karatsuba}
to convince cautious readers.

It is worthwhile to point out that $\fB(\balpha,\bbeta)$ has received considerable attention in recent years due to its connection with the {\it 
Paley graph conjecture}. It is conjectured, for instance with just constant weights $\alpha_m = \beta_n =1$, that
$$
\fB(\balpha,\bbeta)=o(MN)
$$
for all subsets $\cM,\cN$ with $M,N>p^\eta$ and $\eta>0$. This is far from proven, and the classical inequalities of Vinogradov~\eqref{eq:Vinogradov} and Karatsuba~\eqref{eq:Karatsuba-implicit} still stand. See~\cite{DBSW21, HP20} for recent progress towards this conjecture.

For structural sets, such as intervals, or sets with small doubling, or sets supported on short intervals, a large variety of bounds for $\fB(\balpha,\bbeta)$ are known. 
For example, the above bounds of Vinogradov and Karatsuba have been improved by 
Friedlander and Iwaniec~\cite[Theorem~3]{FI93} when $\cM$ and $\cN$ 
are contained in intervals of lengths at most $\sqrt{p}$ and 
are of cardinalities $M,N \ge p^{\frac{11}{24}+\eta}$. Of course, the main point 
here is that $\frac{11}{24}<\frac{1}{2}$, which neither~\eqref{eq:Vinogradov} nor~\eqref{eq:Karatsuba-implicit} with~\eqref{eq:PV-threshold}
can achieve. The exponent $\frac{11}{24}$ has been reduced to $\frac{9}{20}$ by Bourgain, Garaev, Konyagin and Shparlinski~\cite[Theorem~25]{BGKS13}.
Several more results of this type can be found in~\cite{BGKS12, BKS15, Ch08, Ha17, Ka08, SS22, SS18, SV17, Vo18}.

\subsection{Related trilinear and quadrilinear sums}
\label{sec:surv T and Q} 
Before concluding this section, we also mention some recent works on various variants of $\fB(\balpha,\bbeta)$. To proceed, we assume $\cH,\cK,\cM,\cN\subseteq[1,p]$ are arbitrary subsets.
Hanson~\cite{Ha17} has studied
$$
\sum_{k\in\cK} \sum_{m\in \cM} \sum_{n \in \cN} \alpha_m\beta_{n}\chi(k+m + n),
$$
while Roche-Newton, Shparlinski and Winterhof\cite{RSW19} have considered
$$
\sum_{k \in \cK}  \sum_{m\in \cM} \sum_{n \in \cN} 
\chi(km+mn+nk)\ue\(\frac{a(km+mn+nk)}{p}\)
$$
with $\gcd(a,p)=1$. Shkredov and Shparlinski~\cite{SS17} have treated quadrilinear forms
\begin{align*}
&\sum_{h \in\cH} \sum_{k \in \cK}  \sum_{m\in \cM} \sum_{n \in \cN} 
 \alpha_{h} \beta_{k,m,n}\chi(h+ k + mn) ,\\
&\sum_{h \in\cH} \sum_{k \in \cK}  \sum_{m\in \cM} \sum_{n \in \cN} 
\alpha_{h} \beta_{k,m,n}\chi(h+ k(m+n)).
\end{align*}
See also\cite{KMPS20} for some recent generalizations and refinements.

Our main object $\fT(\balpha,\bbeta)$ is intimately related to the following trilinear character sum
$$
\sum_{k\leqslant K} \sum_{m \in \cM}\sum_{n \in \cN} 
\alpha_k\beta_m \gamma_n \chi(k+mn),
$$
where $\chi$ is a non-trivial Dirichlet character modulo $p$ and $\cM,\cN\subseteq[1,p]$ are arbitrary subsets with $M=\#\cM$ and $N=\#\cN$. Banks and Shparlinski~\cite[Theorem~2.2]{BS20} give 
the upper bound
$$
KMN(p^{-1}+(KM)^{-1}+K^{-2})^{\frac{1}{2r}}(p^{\frac{1}{4r}}+N^{-\frac{1}{2}}p^{\frac{1}{2r}})p^{o(1)}
$$
with an arbitrary fixed integer $r \ge 1$, provided that the three coefficients are all bounded.
Note that the above bound is non-trivial as long as
$$
K>p^{\frac{1}{4}+\eta},\qquad KM>p^{\frac{1}{2}+\eta},\qquad N>p^{\eta}
$$
with some constant $\eta>0$.

Our work on $\fT(\balpha,\bbeta)$ is largely inspired by the recent work of Fouvry and Shparlinski~\cite{FS23} on quadrilinear character sums such as 
\begin{align*}
& \fQ_1(\balpha,\bbeta)=\mathop{\sum\sum\sum\sum}_{1 \le r,s,u,v \le x}\alpha_r\beta_u\chi(rs-uv) ,\\
&\fQ_2(\bgamma)=\mathop{\sum\sum\sum\sum}_{1 \le r,s,u,v\le x}\gamma_{r,s,u}\chi(rs-uv), 
\end{align*}
with bounded complex weights $\balpha= (\alpha_r)$, $\bbeta=(\beta_u)$ and $\bgamma = (\gamma_{r,s,u})$.
It is shown in~\cite{FS23}, that  for any fixed $\eta > 0$ and 
$x\ge p^{\frac{1}{8}+\eta}$, we have 
$$
\fQ_1(\balpha,\bbeta)\ll x^{4 -\kappa} \mand  \fQ_2(\bgamma)\ll x^4   \frac{\log \log p}{\log p}
$$
where $\kappa>0$ and the implied constants depend only on $\eta$ (we refer to Section~\ref{sec:not}
for the exact definition of the symbol `$\ll$' and other standard notations).

\subsection{Notation and conventions} 
\label{sec:not}
We adopt the Landau symbol $A=O(B)$, and the Vinogradov symbol $A\ll B$, to mean $|A|\leqslant cB$ for some constant $c>0$. 
We also write $a \sim A$ to indicate that $A< a \le 2A$ and $A \asymp B$ to 
indicate $A \ll B \ll A$.



For each complex weight $\boldsymbol\alpha=(\alpha_m)_{m \in \cM}$ 
and $\rho\geqslant 1$, we denote 
\begin{equation}\label{eq:norm}
\|\balpha\|_\rho =\( \sum_{m \in \cM} |\alpha_m|^\rho\)^{1/\rho}
\mand 
\|\balpha\|_\infty = \max_{m \in \cM} |\alpha_m|.
\end{equation}

For a finite set $\cS$ we use $\#\cS$ to denote its cardinality. 
The letter $p$, with or without subscripts, always denotes a prime number.

We denote by $\F_p$ the finite field of $p$ elements, which we identify by $\{0,1,\ldots,p-1\}$, and in what follows, we mix the usage of $\F_p$ and $\{0,1,\ldots,p-1\}$. We freely alternate between the language of finite fields and the 
language of congruences.  

We also use $\N$ to denote that set of positive integers.

As usual, 
\begin{equation}\label{eq:tau}
\tau(k) = \#\{d \in \N:~ d \mid k\}
\end{equation}
denotes  
the divisor function, and we repeatedly use the classical bound
$\tau(k) = k^{o(1)}$
as $k \to +\infty$ (see, for example, in~\cite[Equation~(1.81)]{IK04}). 
For an integer $a$ coprime to $m$, 
$\overline{a}$ denotes the multiplicative inverse of $a\bmod m$, that is, $a\overline{a}\equiv1\bmod m$, which should not be confused with the complex conjugate.  

Given a function $f\in L^1(\R)$, that is, with a bounded $L^1$-norm over $\R$, 
 the Fourier transform is defined by
$$
\widehat{f}(y)=\int_\R f(x)\ue(-yx)\ud x
$$
with $\ue(z)=\exp(2\pi iz)$.

\section{Main results}
\label{sec:main}

\subsection{Bounds of multilinear character sums}
Put
\begin{equation}\label{eq:L-notation}
\mathscr{L}_1=(|a|K + |b|MN) \mand \mathscr{L}_2=(|a|KL + |b|MN).
\end{equation}
 We prove two estimates for the sums $\fT(\balpha,\bbeta)$ given by~\eqref{eq:trilinearcharactersum}, according to whether the weights $\alpha_m$ are identically $1$, which we write as $\balpha\equiv \mathbf{1}$, or arbitrary.

\begin{theorem}\label{thm:TypeI}
Let $K,M,N>1$ and $p>\max\{K,M,N\}$ a large prime. 
Uniformly over the weights $\balpha\equiv \mathbf{1}$, $\bbeta=(\beta_{k,n})$ and integers $a,b$ with $\gcd(ab,p)=1$, we have
$$
|\fT(\balpha,\bbeta)|
\leqslant\|\bbeta\|_\infty KMN\Delta_1\,p^{o(1)}
$$
for each positive integer $r$, provided that $M>4p^{\frac{1}{2r}}$, where
$$
\Delta_1=(1+NK^{-1})^{\frac{1}{2r}}(1+\mathscr{L}_1MNp^{-1-\frac{1}{2r}})^{\frac{1}{2r}}\(\frac{p^{1+\frac{1}{r}}}{M^4N^4}\)^{\frac{1}{4r}}
$$
with $\mathscr{L}_1$ as defined by~\eqref{eq:L-notation}. 
\end{theorem}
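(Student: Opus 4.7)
The plan is to employ a Burgess--H\"older strategy, taking advantage of the fact that $\balpha \equiv \mathbf{1}$ makes the $m$-variable unweighted and thus amenable to the Burgess shifting trick. As a preparatory move, note that for $\gcd(n,p) = 1$ one has the factorization
\[
\chi(ak+bmn) = \chi(bn)\,\chi\bigl(m+\overline{bn}\cdot ak\bigr),
\]
so absorbing $\chi(bn)$ into a modified weight $\gamma_{k,n} = \beta_{k,n}\chi(bn)$ (with $|\gamma_{k,n}|\le\|\bbeta\|_\infty$) and setting $S_M(t) = \sum_{m\le M}\chi(m+t)$ and $t(k,n) = \overline{bn}\cdot ak \bmod p$, one reduces $\fT$ to a weighted sum of shifted character sums $\sum_{k,n}\gamma_{k,n}S_M(t(k,n))$, up to a negligible boundary contribution from $n\equiv 0\bmod p$ of size $O(\|\bbeta\|_\infty K)$.

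The heart of the argument is the Burgess shifting identity
\[
S_M(t) = \frac{1}{YZ}\sum_{y\le Y}\sum_{z\le Z}\sum_{m\le M}\chi(m+t+yz) + O(YZ),
\]
valid for parameters $Y,Z$ with $YZ\le M$, combined with the factorization $\chi(m+t+yz) = \chi(y)\chi(\overline{y}(m+t)+z)$, which turns the $z$-sum into a new shifted character sum $S_Z(\overline{y}(m+t))$. Applying H\"older's inequality of order $2r$ on the (unweighted) pair $(y,m)$ and expanding the $2r$-th power, one arrives at
\[
\sum_{\vec z,\vec z'\in[1,Z]^r}\sum_{y, m}\sum_{k,n,k',n'}\prod_j\gamma_{k_j,n_j}\bar\gamma_{k'_j,n'_j}\,\chi\!\Biggl(\prod_{j=1}^r\frac{\overline y(m+t(k_j,n_j))+z_j}{\overline y(m+t(k'_j,n'_j))+z'_j}\Biggr),
\]
in which, after a change of variable, the inner rational function in $(k,n)$ turns out to be homogeneous of degree zero in $(k,n)$ and therefore depends only on the ratio $k/n \bmod p$.

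The final stage is the estimation of this combined sum. For each $(\vec z,\vec z')$, the inner $(k,n)$-sum is completed to $\F_p^2$ via Fourier expansion of $\mathbf{1}_{[1,K]}$ and $\mathbf{1}_{[1,N]}$; the completion error is controlled by the size of the integer forms $ak$ and $bmn$, and this is precisely where $\mathscr L_1 = |a|K+|b|MN$ enters, producing the factor $(1+\mathscr L_1 MN p^{-1-1/(2r)})^{1/(2r)}$. The main term is estimated by Weil's bound applied to the 1D rational character sum in $u = k/n$, giving $O(r\sqrt p)$ when the rational function is not a $d$-th power (where $d = \ord\chi$), which after optimization in $Y,Z$ yields the Burgess-type factor $(p^{1+1/r}/(MN)^4)^{1/(4r)}$. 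The degenerate configurations where the rational function becomes a $d$-th power reduce to matching the multisets of zeros/poles, whose count is governed by the Farey-type energy $\sum_s A(s)^2$ with $A(s) = \#\{(k,n)\in[1,K]\times[1,N]: k\equiv sn\bmod p\}$; this produces the remaining factor $(1+NK^{-1})^{1/(2r)}$. Taking $2r$-th roots and absorbing $r^{O(r)}$ and logarithmic factors into $p^{o(1)}$ then yields the claimed bound.

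The main obstacle I anticipate is orchestrating the Burgess shift, the H\"older step of order $2r$, the Fourier completion, and Weil's bound in such a way as to recover exactly the three factors of $\Delta_1$; in particular, the simultaneous exploitation of the $m$- and $n$-variables to obtain a $(MN)^{-1/r}$-saving (rather than a mere $M^{-1/r}$-saving from pointwise Burgess) requires care. The second delicate issue is the Farey-type counting of collisions $kn'\equiv k'n\bmod p$ that produces the asymmetric factor $(1+N K^{-1})^{1/(2r)}$ and must be tracked through every sub-degenerate stratum of the $2r$-th moment expansion.
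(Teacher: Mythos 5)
Your plan shares the strategic skeleton of the paper's argument: shift $m$ by a product of two auxiliary parameters, apply H\"older with exponent $2r$, and invoke a Weil--type estimate for a complete character sum. But the two proofs diverge at the decisive stage, and your version, as written, has a genuine gap there.

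After your Burgess shift and the H\"older step of order $2r$ in the (unweighted) pair $(y,m)$, the moment you must bound is
$$
\sum_{\vec z,\vec z'}\ \sum_{\vec k,\vec n,\vec k',\vec n'}\ \prod_{j}\gamma_{k_j,n_j}\overline{\gamma}_{k_j',n_j'}\ \sum_{y,m}\chi\biggl(\prod_{j}\frac{m+t(k_j,n_j)+y z_j}{m+t(k_j',n_j')+y z_j'}\biggr).
$$
You then propose to ``complete the $(k,n)$-sum to $\F_p^2$ via Fourier expansion of $\mathbf{1}_{[1,K]}$ and $\mathbf{1}_{[1,N]}$'' and to ``apply Weil to the 1D rational character sum in $u=k/n$''. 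Neither of these steps is available: the $(k_j,n_j)$- and $(k_j',n_j')$-sums are not unweighted characteristic-function sums but carry the arbitrary factors $\gamma_{k_j,n_j}$, so Fourier completion does not apply, and there is no single variable $u=k/n$ but rather $2r$ independent ratios, each tied to an arbitrary weight. In a Burgess/H\"older setup the Weil input must land on the genuinely free (unweighted) variables, here $(y,m)$; the $2r$-fold $(k,n)$-variables can only be treated by counting or a Cauchy--Schwarz-type energy argument. This is not a notational slip --- it is the entire content of the main-term estimate, and the route you describe does not close.

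The paper proceeds differently and avoids this dead end. It shifts $m\mapsto m+uv$ via a tent function $w$, then exploits the algebraic identity $\chi(ak+b(m+uv)n)=\chi(un)\,\chi(\overline{un}(ak+bmn)+bv)$ to regroup the sum over $(k,m,n,u)$ by the residue $x\equiv\overline{un}(ak+bmn)\bmod p$ with multiplicity $\varrho(x)$. After H\"older with exponents $\bigl(\tfrac{r}{r-1},\,2r,\,2r\bigr)$, the problem splits cleanly into $\cI_1=\sum_x\varrho(x)$ (trivial), $\cI_2=\sum_x\varrho(x)^2$, and $\cI_3=\sum_x\bigl|\sum_{v\sim V}\ue(v\xi)\chi(x+bv)\bigr|^{2r}$. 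The crucial new ingredient is Lemma~\ref{lm:I2-upperbound}, which bounds $\cI_2$ by an elementary Diophantine count of solutions to $(ak+bmn)\widetilde n\widetilde u=(a\widetilde k+b\widetilde m\widetilde n)nu+tp$ with $|t|\leqslant T$; the parameter $T$ is exactly what produces the factor $\bigl(1+\mathscr L_1 MN p^{-1-1/(2r)}\bigr)^{1/(2r)}$, while the ``one $k$ per progression of modulus $n$'' observation gives the asymmetric factor $(1+N/K)^{1/(2r)}$. Weil is used only through the Davenport--Erd\H{o}s bound for $\cI_3$, with no degenerate-case stratification to keep track of. If you want to salvage your version, the moment $\sum_{y,m}|\Phi(y,m)|^{2r}$ would have to be attacked by first applying Cauchy--Schwarz or a counting lemma on the $(k,n)$-sums to dispose of the weights, and only then invoking Weil on the $(y,m)$-sum --- which would effectively rebuild the paper's $\cI_2$/$\cI_3$ split by a more circuitous path.
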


\begin{remark}\label{remark1}
Taking $N=K=1$ in Theorem~\ref{thm:TypeI}, we recover the celebrated Burgess bound for short linear character sums, 
which shows oscillations of non-trivial multiplicative characters modulo $p$, in any interval with length 
at least $p^{\frac{1}{4}+\varepsilon}$ for any $\varepsilon > 0$ and sufficiently large $p$. This is also known as the {\it Burgess threshold} (see~\cite[Theorem~12.26]{IK04}). 
\end{remark}

\begin{remark} 
\label{rem:1/8-treshold}
It is not easy to give a full description on the range of $(K,M,N)$ which is equivalent to $\Delta_1<1$. However,  when $a=b=1$, we note  that 
Theorem~\ref{thm:TypeI} is non-trivial if either of the following conditions
\begin{itemize}
\item $p^{\frac{1}{4}+\eta}<MN\leqslant p^{\frac{1}{2}}$ and  $N\leqslant K\leqslant p/(MN)$,
\item $MN\leqslant p^{\frac{1}{2}+\eta}\leqslant M^2NK$ and $N\ge K$,
\end{itemize}
holds with some fixed $\eta>0$.
For example, we can take 
\begin{equation}\label{eq:1/8}
K,M,N \sim p^{\frac{1}{8}+\eta},
\end{equation}
for small $\eta>0$.  The lower bound for the values of $K,M,N$ given by~\eqref{eq:1/8},
is the square root of the Burgess threshold. 
\end{remark}

We now use Theorem~\ref{thm:TypeI} to get a new bound, with a power saving, 
on the sums $\fQ_2(\bgamma)$ 
from Section~\ref{sec:surv T and Q}. Indeed, applying Theorem~\ref{thm:TypeI} with
$$
(m,n,k;a,b)\leftarrow (v,u,k;1,-1),\qquad (K,M,N)\leftarrow (x^2,x,x)
$$
and
$$
\beta_{n,k}=\mathop{\sum\sum\sum}_{\substack{1 \le r,s,u\le x\\ rs=k, \, u = n}}\gamma_{r,s,u},
$$
we find the following.

\begin{corollary}
\label{cor; Appl tp Q2}
Let $p$ be a large prime with $p > x^2 \ge 1$. 
Uniformly over the weights $\bgamma=(\gamma_{r,s,u})$, we have
$$
|\fQ_2(\bgamma)|\le \|\bgamma\|_\infty x^{4-\frac{2}{r}}(1+x^4p^{-1-\frac{1}{2r}})^{\frac{1}{2r}}p^{\frac{1}{4r}+\frac{1}{4r^2}+o(1)}
$$
for each positive integer $r$.
\end{corollary}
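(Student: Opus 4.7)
The plan is to derive Corollary~\ref{cor; Appl tp Q2} as a direct specialization of Theorem~\ref{thm:TypeI}, following the substitution indicated in the paragraph preceding the statement. Specifically, I would set $(a,b)=(1,-1)$, $(K,M,N)=(x^2,x,x)$, relabel the indices $(m,n,k)\leftarrow (v,u,k)$, take $\balpha\equiv\mathbf{1}$, and fold the pair $(r,s)$ into a single outer variable $k=rs$ by defining
$$
\beta_{k,n}=\sum_{\substack{1\le r,s,u\le x\\ rs=k,\ u=n}}\gamma_{r,s,u}.
$$
A direct unpacking of definitions then shows $\fT(\balpha,\bbeta)=\fQ_2(\bgamma)$ under this identification.

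The bulk of the proof is a short calculation. First I would bound $\|\bbeta\|_\infty$: each fibre $\{(r,s):rs=k,\ 1\le r,s\le x\}$ has cardinality at most $\tau(k)=k^{o(1)}$ by~\eqref{eq:tau}, and since $k\le x^2\le p$ this yields $\|\bbeta\|_\infty\le \|\bgamma\|_\infty\,p^{o(1)}$. It remains to simplify $\Delta_1$. With $\mathscr{L}_1=|a|K+|b|MN=2x^2$, the first factor $(1+NK^{-1})^{1/(2r)}=(1+x^{-1})^{1/(2r)}=O(1)$ is harmless; the middle factor equals $(1+2x^4p^{-1-1/(2r)})^{1/(2r)}$, which matches the shape in the corollary up to a bounded constant; and the last factor is $(p^{1+1/r}x^{-8})^{1/(4r)}=x^{-2/r}p^{1/(4r)+1/(4r^2)}$. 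Multiplying $\|\bbeta\|_\infty\cdot KMN\cdot\Delta_1\cdot p^{o(1)}$ together and absorbing all absolute constants into $p^{o(1)}$ then produces exactly the bound claimed.

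The only point requiring minor care is the hypothesis $M>4p^{1/(2r)}$ of Theorem~\ref{thm:TypeI}, which here reads $x>4p^{1/(2r)}$; this is satisfied throughout the range of $r$ for which the stated bound is sharper than the trivial bound $\|\bgamma\|_\infty x^4$, and outside that range the corollary is weaker than the triangle inequality and needs no proof. Apart from this bit of bookkeeping, the argument is a mechanical specialization of Theorem~\ref{thm:TypeI}, and I do not anticipate any genuine analytic obstacle.
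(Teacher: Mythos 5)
Your proposal is exactly the paper's approach: the paper applies Theorem~\ref{thm:TypeI} with the substitution $(m,n,k;a,b)\leftarrow(v,u,k;1,-1)$, $(K,M,N)\leftarrow(x^2,x,x)$ and the folded weight $\beta_{k,n}=\sum_{rs=k,\,u=n}\gamma_{r,s,u}$, and your computations (the $\tau(k)=p^{o(1)}$ bound on $\|\bbeta\|_\infty$, $\mathscr{L}_1=2x^2$, and the simplification of $\Delta_1$) are all correct and reproduce the stated bound.

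The one point I would push back on is your final claim that ``either $x>4p^{1/(2r)}$ holds or the stated bound is weaker than trivial.'' That dichotomy fails for small $r$. Take $r=1$ and $x\asymp p^{3/8}$, so $p>x^2$ is satisfied. Then $\mathscr L_1 MN p^{-3/2}\asymp x^4p^{-3/2}\asymp 1$, the middle factor of $\Delta_1$ is $O(1)$, the last factor is $(p^2/x^8)^{1/4}\asymp p^{-1/4}$, and the corollary's bound is $\asymp x^4 p^{-1/4}$, a genuine power saving over $x^4$. Yet the hypothesis $M>4p^{1/(2r)}$ reads $x>4p^{1/2}$, which is impossible under $p>x^2$. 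The same kind of failure occurs for $r=2$ with $x$ roughly between $p^{3/16}$ and $p^{1/4}$. So for $r\le 2$ the corollary as written does not reduce to Theorem~\ref{thm:TypeI}, and the patch you propose does not close the gap. In fairness, the paper itself is equally terse --- it simply says ``applying Theorem~\ref{thm:TypeI} \dots, we find the following'' without checking $M>4p^{1/(2r)}$ --- and the corollary is used downstream only in the regime $x\ge p^{1/8+\eta}$ with $r$ large, where $4p^{1/(2r)}<x$ holds comfortably and the bound you derived is what is wanted. So your argument is essentially identical to the paper's, with the only addition being the explicit (but not quite correct) attempt to handle the small-$r$ edge cases; a clean fix would be to add the hypothesis $x>4p^{1/(2r)}$ to the corollary's statement rather than to argue around it.
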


We see that Corollary~\ref{cor; Appl tp Q2}  gives a non-trivial bound for $\fQ_2(\bgamma)$ with a power-saving as long as $x\ge p^{\frac{1}{8}+\eta}$ with some fixed $\eta > 0$. And this also recovers the above bound for $\fQ_1(\balpha,\bbeta)$
with more general weights.

For arbitrary weights $\boldsymbol\alpha$ and $\bbeta$, we have the following alternative bound.  

\begin{theorem}\label{thm:TypeII}
Let $K,M,N>1$ and $p>\max\{K,M,N\}$ a large prime. 
Uniformly over the weights $\balpha=(\alpha_m)$, $\bbeta=(\beta_{k,n})$ and integers $a,b$ with 
$\gcd(ab,p)=1$, we have
$$
|\fT(\balpha,\bbeta)|
\leqslant\|\boldsymbol\alpha\|_\infty\|\boldsymbol\beta\|_2M(NK)^{\frac{1}{2}}\Delta_2\, p^{o(1)}
$$
for each positive integer $r\geqslant2$, provided that $N>4p^{\frac{1}{r}}$, where
$$
\Delta_2=(1+KM^{-2})^{\frac{1}{4r}}(1+\mathscr{L}_1MNp^{-1-\frac{1}{r}})^{\frac{1}{2r}}\(\frac{p^{1+\frac{1}{r}}}{K(MN)^2}\)^{\frac{1}{4r}}+M^{-\frac{1}{2}}
$$
with $\mathscr{L}_1$ as defined by~\eqref{eq:L-notation}.
\end{theorem}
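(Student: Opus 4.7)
The plan is to absorb the arbitrary weight $\balpha=(\alpha_m)$ through an initial Cauchy--Schwarz and then bring the Burgess--completion method that underlies Theorem~\ref{thm:TypeI} to bear on the resulting squared quantity. First I apply Cauchy--Schwarz in the pair $(k,n)$ to get
\begin{equation*}
|\fT(\balpha,\bbeta)|^2 \le \|\bbeta\|_2^2 \sum_{k\le K}\sum_{n\le N}\Bigl|\sum_{m\le M}\alpha_m\chi(ak+bmn)\Bigr|^2 = \|\bbeta\|_2^2 \sum_{m,m'\le M}\alpha_m\overline{\alpha_{m'}}\,W(m,m'),
\end{equation*}
where
\begin{equation*}
W(m,m')=\sum_{k\le K}\sum_{n\le N}\chi(ak+bmn)\,\overline{\chi(ak+bm'n)}.
\end{equation*}
The diagonal $m=m'$ contributes at most $\|\balpha\|_2^2 KN\le \|\balpha\|_\infty^2 MKN$, and after taking the square root this produces exactly the $M^{-1/2}$ summand of $\Delta_2$.

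For the off-diagonal part $m\neq m'$, the sum $W(m,m')$ is a two-variable character sum whose argument $(ak+bmn)/(ak+bm'n)$ is a M\"obius transformation in $n$ for each fixed $k$ (and in $k$ for each fixed $n$). The plan is to apply to the $n$-variable the Burgess-type shift and completion argument that drives Theorem~\ref{thm:TypeI}: shift $n\to n+\lambda$ with $\lambda$ ranging over a small product set of cardinality $\asymp N$, factorise using the multiplicativity of $\chi$, and raise the result to the $2r$-th moment against $(m,m')$. It is at this $(m,m')$-sum that the Weil bound will deliver the cancellation. Completion in the $k$ variable, via the standard Fourier expansion of the indicator $\mathbf{1}_{[1,K]}$, introduces the factor $p^{1/r}$ and accounts for the $\left(p^{1+1/r}/(K(MN)^2)\right)^{1/(4r)}$ shape of the remaining summand of $\Delta_2$.

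Putting everything together---the $\|\bbeta\|_2$ from the first Cauchy--Schwarz, the diagonal and off-diagonal contributions, the boundary losses of the Burgess shift producing $(1+KM^{-2})^{1/(4r)}$, and the completion losses producing $(1+\mathscr{L}_1 MN p^{-1-1/r})^{1/(2r)}$---and taking the square root will yield the claimed $\Delta_2$. The hypothesis $N>4p^{1/r}$ enters at the completion step: it ensures that the length of the $n$-sum is large enough for the Burgess/completion trade-off to remain profitable for the chosen $r$.

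The hardest part will be the off-diagonal analysis, and within it the careful bookkeeping of degenerate subfamilies in the $2r$-th moment expansion---tuples of shifts $(\lambda_1,\ldots,\lambda_{2r})$ on which the polynomial argument of $\chi$ collapses to a $p$-th power, pairs with $m=m'$ among paired indices, and so on. These degenerate contributions must be controlled by combinatorial counts and matched against the Weil savings on the generic locus so that they do not dominate the main term. The restriction $r\ge 2$---as opposed to $r\ge 1$ in Theorem~\ref{thm:TypeI}---presumably reflects the need to stabilise the Weil bound on the completed two-variable sum in $(m,m')$ that is produced by the extra Cauchy--Schwarz step.
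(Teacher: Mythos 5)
Your opening moves match the paper exactly: Cauchy--Schwarz in $(k,n)$ absorbs $\balpha$ and produces $\|\bbeta\|_2$; the diagonal $m=m'$ gives $KMN$ and, after the final square root, the $M^{-1/2}$ summand of $\Delta_2$; and the Burgess-type shift $n\to n+uv$ with $u\sim U$, $v\sim V$, $UV\asymp N$ is indeed what the paper does. However, the off-diagonal analysis in your proposal diverges from the paper's and has a genuine gap: there is no ``completion in the $k$ variable via Fourier expansion of $\mathbf{1}_{[1,K]}$'' in the paper, and that step would not deliver the stated shape of $\Delta_2$. What the paper actually does is regroup all the discrete variables into a weight $\varrho(x_1,x_2)$ on $\F_p^2$ by writing $ak+bm_jn\equiv um_jx_j$, and then applies H\"older so that the $2r$-th moment is taken over the \emph{completed} pair $(x_1,x_2)\in\F_p^2$, not over $(m,m')$ and not via additive-character completion of $k$. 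The $p^{1/r}$ factor then comes purely from the choice $V=p^{1/r}$.

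The two tools you would need, and do not supply, are the following. First, after the Cauchy--Schwarz you face products $\chi(\cdot)\overline{\chi}(\cdot)$, and the Weil input must be the two-variable moment bound
$$\mathop{\sum\sum}_{x_1,x_2\in\F_p}\Bigl|\sum_{v\sim V}\gamma_v\chi(x_1+v)\overline{\chi}(x_2+v)\Bigr|^{2r}\ll V^{2r}p+V^rp^2,$$
which is Lemma~\ref{lm:DavenportErdos2}; the one-variable Davenport--Erd\H{o}s bound used in Theorem~\ref{thm:TypeI} is insufficient here. Second, the factors $(1+KM^{-2})^{1/(4r)}$ and $(1+\mathscr{L}_1 MN p^{-1-1/r})^{1/(2r)}$ do \emph{not} come from ``boundary losses of the shift'' or ``completion losses''; they come from a Diophantine counting estimate for the second moment $\cJ_2=\sum_{x_1,x_2}\varrho(x_1,x_2)^2$, which is Lemma~\ref{lm:J2-upperbound}: the congruences are lifted to equations over $\Z$ with error parameters $|t_1|,|t_2|\le T$, $T\asymp 1+\mathscr{L}_1MU/p$, and the number of solutions in $k$ is controlled by $K\gcd(m_1,m_2)/(m_1m_2)+1$, summing to $K+M^2$. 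Without this counting, the exponents in $\Delta_2$ cannot be recovered. In short, your skeleton is compatible with the paper's, but the two essential lemmas --- the two-variable Weil moment bound and the second-moment Diophantine count --- are the content of the proof, and substituting an additive Fourier completion of $k$ for them does not lead anywhere.
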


\begin{remark} 
Fix $a=b=1$, then
Theorem~\ref{thm:TypeII} is non-trivial as long as
$$
M>p^\eta,\qquad  K(MN)^2>(1+KM^{-2})p^{1+\eta}, \qquad MN(K+MN) <p,
$$
holds with some fixed $\eta > 0$.  
For example, we can take
$$
K,M,N \sim p^{\frac{1}{5}+\eta},
$$
which has to be compared with~\eqref{eq:1/8} and also with the Burgess threshold.
\end{remark}

As an application of the above bounds for $\fT(\balpha,\bbeta)$, one is allowed to
address a modulo $p$ version of a question of Iwaniec and S{\'a}rk{\"o}zy~\cite{IS87} about distances between product sets and squares. 
We present such an application in \S~\ref{sec:applications}.

One can derive some bounds on such sums from Theorems~\ref{thm:TypeI} and~\ref{thm:TypeII}
with a trivial summation over $a$, which plays the role of $\ell$ in $\fQ(\boldsymbol\alpha,\boldsymbol\beta)$. However we may obtain a more 
precise bound as follows.

\begin{theorem}\label{thm:quadrilinearform} 
Let $K,L,M,N>1$ and $p>\max\{K,L,M,N\}$ a large prime. 
Uniformly over the weights $\balpha=(\alpha_{\ell,m})$
and  $\bbeta=(\beta_{k,n})$, we have
$$
|\fQ(\boldsymbol\alpha,\boldsymbol\beta)|
\leqslant \|\boldsymbol\alpha\|_\infty\|\boldsymbol\beta\|_2LM(KN)^{\frac{1}{2}}p^{o(1)}\cdot \Delta_3,
$$ 
for each positive integer $r\geqslant2$, provided that $K,N>4p^{\frac{1}{r}}$, where
\begin{align*}
\Delta_3
=(KLMN)^{-\frac{3}{4r}}&\(\frac{M}{K}+1\)^{\frac{1}{2r}}\(1+\frac{\mathscr{L}_2^2}{p^{1+\frac{1}{r}}}\)^{\frac{1}{2r}}\mathscr{L}_2^{\frac{1}{2r}}p^{\frac{1}{4r}+\frac{1}{2r^2}}\\
& \qquad  +(KN)^{-\frac{1}{2}}p^{\frac{1}{2r}}+(MN)^{-\frac{1}{2}}p^{\frac{1}{2r}}+(LM)^{-\frac{1}{2}}+p^{-\frac{1}{2}}
\end{align*}
with $\mathscr{L}_2$ as defined by~\eqref{eq:L-notation}.
\end{theorem}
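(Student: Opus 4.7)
The plan is to follow a two-step Burgess-style strategy: a preliminary Cauchy--Schwarz to remove the multilinear weights $\alpha_{\ell,m}$, followed by a Burgess shift and a Weil-type bound for the resulting complete character sum in two variables.

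First, I would apply the Cauchy--Schwarz inequality in the pair $(\ell, m)$ to strip away the weight $\alpha_{\ell, m}$, reducing the problem to
$$|\fQ(\balpha,\bbeta)|^2 \leqslant LM\|\balpha\|_\infty^2 \cdot S, \qquad S := \sum_{\ell \leqslant L}\sum_{m \leqslant M} \biggl| \sum_{k \leqslant K} \sum_{n \leqslant N} \beta_{k,n}\chi(ak\ell + bmn)\biggr|^2.$$
Opening the modulus square and invoking the multiplicativity $\chi(u)\overline{\chi(v)} = \chi(u\bar v)$ converts the inner sum over $(\ell, m)$ into a character sum of a two-variable rational function, weighted by $\beta_{k_1, n_1}\overline{\beta_{k_2, n_2}}$ for tuples $(k_1, k_2, n_1, n_2)$. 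The trivial bound for $S$ would only recover the trivial estimate for $\fQ(\balpha, \bbeta)$; the goal of the remaining analysis is to save the factor $\Delta_3^2\, p^{o(1)}$.

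Second, I would split the outer quadruple sum over $(k_1, k_2, n_1, n_2)$ according to whether the tuple is \emph{diagonal} (meaning $k_1 = k_2$, $n_1 = n_2$, or the two linear forms $ak_i\ell + bmn_i$ coincide projectively modulo $p$) or \emph{generic}. The diagonal contributions, handled via a second Cauchy--Schwarz on $\bbeta$ together with the usual short-interval Burgess bound on the remaining one-variable character sum (which introduces the ubiquitous factor $p^{1/(2r)}$), precisely supply the four secondary error terms $(KN)^{-1/2}p^{1/(2r)}$, $(MN)^{-1/2}p^{1/(2r)}$, $(LM)^{-1/2}$ and $p^{-1/2}$ in $\Delta_3$.

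Third, for the generic off-diagonal part I would apply the Burgess shift $\ell \mapsto \ell + jq$, with $j$ in a short interval and $q$ running over a set of small moduli, average via Hölder's inequality with exponent $2r$, and complete the shifted $\ell$-sum through Polya--Vinogradov to a sum over all of $\F_p$. The resulting complete character sum of a rational function on $\F_p^2$ is bounded via Weil's theorem (or its two-variable extension due to Deligne); balancing the length of the shift against the trivial and completed contributions delivers the main term of $\Delta_3$.

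The main obstacle will be the careful bookkeeping of the algebraic degeneracies of the two-variable rational function $(ak_1\ell + bmn_1)\overline{(ak_2\ell + bmn_2)}$ and the coordination of the Hölder optimization in $r$: these must be tuned to produce the unusual exponent $p^{1/(4r) + 1/(2r^2)}$ and the factor $(M/K + 1)^{1/(2r)}$ in $\Delta_3$, which are the fingerprints of iterating a Burgess-type argument twice across both the $\ell$- and $m$-variables, an extra averaging compared with the proof of Theorem~\ref{thm:TypeII}.
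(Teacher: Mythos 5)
Your opening Cauchy--Schwarz goes in the opposite direction from the paper's, and this is not a cosmetic difference. You strip $\balpha$, bounding $\|\balpha\|_2\le(LM)^{1/2}\|\balpha\|_\infty$ and leaving
$$S=\sum_{\ell\le L}\sum_{m\le M}\Bigl|\sum_{k\le K}\sum_{n\le N}\beta_{k,n}\chi(ak\ell+bmn)\Bigr|^2,$$
so that after opening the square you face the product $\beta_{k_1,n_1}\overline{\beta_{k_2,n_2}}$, a weight in four variables controlled only in $\ell_2$-norm. There is no clean way to carry this through the Burgess machinery: to sum these weights away you either pay with another Cauchy--Schwarz (which would force a saving of $\Delta_3^4$ rather than $\Delta_3^2$ on the inner moment) or you fall back on a pointwise bound for the two-variable character sum $\sum_{\ell,m}\chi(ak_1\ell+bm n_1)\overline\chi(ak_2\ell+bm n_2)$; the latter fails precisely in the interesting range $L,M\sim p^{1/8+\eta}$, which is below the two-variable Burgess threshold. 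The paper's Cauchy--Schwarz on $(k,n)$ instead strips $\bbeta$ (producing the $\|\bbeta\|_2$ in the theorem) and leaves the \emph{pointwise bounded} weights $\alpha_{\ell_1,m_1}\overline{\alpha_{\ell_2,m_2}}$ inside, which can simply be absorbed, and then the Burgess shift runs over the free variables $k$ and $n$ with coupled parameters $u,w,v$ satisfying $UV=K/4$, $WV=N/4$ --- a \emph{double} shift, whereas you only shift $\ell$. The double shift is what produces the common shift parameter $v$ appearing additively inside both characters.

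Your reading of the secondary terms in $\Delta_3$ is also off. In the paper they do not come from a ``diagonal in $(k_1,n_1,k_2,n_2)$'' plus a one-variable Burgess bound; they arise from the degenerate configurations $\ell_1 m_2\equiv\ell_2 m_1\pmod p$ (giving $(LM)^{-1/2}$ and part of $p^{-1/2}$), from $\mathsf{Err}_1$ (the contribution of $p\mid(a\ell_j u+bm_j w)(ak\ell_j+bm_j n)$, giving $(MN)^{-1/2}p^{1/(2r)}$ and the other part of $p^{-1/2}$), and from $\mathsf{Err}_2$ (the restriction $kw\ne nu$, giving $(KN)^{-1/2}p^{1/(2r)}$). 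Finally, the paper does not complete to $\F_p^2$ and invoke Deligne: it uses Lemma~\ref{lm:DavenportErdos2}, a two-variable $2r$-th moment of a \emph{one}-variable complete character sum (Weil for $\chi$ of a polynomial), together with the delicate combinatorial counting of Lemma~\ref{lm:gcdsum} after pulling out $d=\gcd(ak,bn)$. That gcd extraction and Lemma~\ref{lm:gcdsum} are the genuine core of the proof and are exactly the ``careful bookkeeping'' you flag as an obstacle without addressing.
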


We now analyse when Theorem~\ref{thm:quadrilinearform} wins over the trivial bound 
$$
|\fQ(\boldsymbol\alpha,\boldsymbol\beta)| \le \|\boldsymbol\alpha\|_\infty\|\boldsymbol\beta\|_2(LM)(KN)^{\frac{1}{2}}.
$$

\begin{corollary}  \label{cor:nontriv}  
Fix $a=b=1$.
For any  $\eta>0$ there is some $\kappa> 0$ such that if
\begin{equation}
\begin{cases}\label{manycases}
KL\ll MN\leqslant KLp^{\frac{1}{3}-\eta},\\ K^5L^3N \geqslant Mp^{1+\eta}, \\ (M/K)^5(N/L)^3\leqslant p^{1-\eta},\\
(KL)^3  MN\geqslant p^{1+\eta},\\
KN\geqslant p^\eta,\\
MN\geqslant p^\eta,\\
LM\geqslant p^\eta,
\end{cases}
\end{equation}
then
$$
|\fQ(\boldsymbol\alpha,\boldsymbol\beta)|
\leqslant \|\boldsymbol\alpha\|_\infty\|\boldsymbol\beta\| LM (KN)^{\frac{1}{2}}p^{-\kappa}.
$$
\end{corollary}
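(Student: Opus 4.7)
The plan is to apply Theorem~\ref{thm:quadrilinearform} with a large positive integer $r$, to be chosen in terms of $\eta$, and to show that each of the five terms making up $\Delta_3$ is at most $p^{-\kappa'}$ for some $\kappa'>0$ depending only on $\eta$. After absorbing the $p^{o(1)}$ factor from the theorem, this yields the claim with any fixed $\kappa\in(0,\kappa')$.

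The four tail terms $(KN)^{-1/2}p^{1/(2r)}$, $(MN)^{-1/2}p^{1/(2r)}$, $(LM)^{-1/2}$ and $p^{-1/2}$ are immediate: the lower bounds $KN,MN,LM\geqslant p^\eta$ from the last three lines of~\eqref{manycases} make each of them at most $p^{-\eta/4}$ once $r>2/\eta$. The same choice of $r$ also dwarfs $p^{1/r}$ against $p^\eta$, which is helpful for matching the hypothesis $K,N>4p^{1/r}$ of Theorem~\ref{thm:quadrilinearform}.

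The principal obstacle is the first summand of $\Delta_3$, which I denote $T_1$. Using $KL\ll MN$ from~\eqref{manycases}, one has $\mathscr{L}_2\asymp MN$. I would then split into four sub-cases based on (i) whether $M\leqslant K$ or $M>K$, controlling the factor $(M/K+1)^{1/(2r)}$, and (ii) whether $(MN)^2\leqslant p^{1+1/r}$ or $(MN)^2>p^{1+1/r}$, controlling $(1+\mathscr{L}_2^2/p^{1+1/r})^{1/(2r)}$. After collecting exponents, $T_1$ reduces in each sub-case (up to constants) to, respectively,
\[
\((KL)^3MN\)^{-\frac{1}{4r}}p^{\frac{1}{4r}+\frac{1}{2r^2}},\qquad
\(K^5L^3N/(Mp)\)^{-\frac{1}{4r}}p^{\frac{1}{2r^2}},
\]
\[
\(MN/(KL)\)^{\frac{3}{4r}}p^{-\frac{1}{4r}},\qquad
\((M/K)^5(N/L)^3/p\)^{\frac{1}{4r}},
\]
and these are precisely controlled by the four remaining inequalities in~\eqref{manycases}: $(KL)^3MN\geqslant p^{1+\eta}$, $K^5L^3N\geqslant Mp^{1+\eta}$, $MN\leqslant KLp^{1/3-\eta}$, and $(M/K)^5(N/L)^3\leqslant p^{1-\eta}$, respectively.

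In each sub-case the resulting bound is of the form $p^{-c(\eta)/r}$ up to a harmless $p^{1/(2r^2)}$ factor, so taking $r$ large enough in terms of $\eta$ ensures a net saving of $p^{-\kappa'}$. The main calculational work lies in the careful bookkeeping of exponents in the four algebraic reductions above; the scheme is robust since $r$ may be chosen as large as needed.
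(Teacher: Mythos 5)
Your proposal is correct and follows essentially the same route as the paper: the paper raises $\widetilde\Delta_3$ to the power $2r$ and expands the product $(M/K+1)(1+\mathscr{L}_2^2/p^{1+1/r})$ into four terms, whereas you split into the four corresponding sub-cases depending on which summand dominates each factor — a cosmetic difference that produces identical bounds. Your four algebraic reductions check out and match the paper's four inequalities term by term.
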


To see this,  we write the quantity $\Delta_3$ in an obvious manner as
\begin{equation}\label{eq:Delta3-Delta3'}
\Delta_3=\widetilde \Delta_3+(KN)^{-\frac{1}{2}}p^{\frac{1}{2r}}+(MN)^{-\frac{1}{2}}p^{\frac{1}{2r}}+(LM)^{-\frac{1}{2}}+p^{-\frac{1}{2}}
\end{equation}
corresponding to five terms in its definition. By taking $r$ sufficiently large, each of the last four terms in~\eqref{eq:Delta3-Delta3'} is at most $p^{-\kappa}$ for some $\kappa>0$, provided that the last three conditions in~\eqref{manycases} hold.
It remains to check that $\widetilde \Delta_3\ll p^{-\kappa}$. We now assume $KL\ll MN$ so that $\mathscr{L}_2=KL+MN\ll  MN$.
First, rising  $\widetilde \Delta_3$ to the power $2r$ and  expanding it, we obtain four terms. Each of these terms is at most $p^{-\kappa}$ as long as $KL\ll MN$ and
$$
\max\left\{(MN)^2 p^{-\frac 12}, \, \dfrac MKp^{\frac 12 +\frac 1r},\, \dfrac {M^3N^2}{Kp^{\frac 12}} ,\, 
p^{\frac 12 +\frac 1r}\right\}  \ll (KL)^{\frac 32} (MN)^{\frac 12}p^{-2r\kappa}.
$$
These leads to the remaining inequalities in~\eqref{manycases}, after  choosing $r$ to be large enough.

In particular,  we see from Corollary~\ref{cor:nontriv} that we have a non-trivial 
bound on $\fQ(\boldsymbol\alpha,\boldsymbol\beta)$ provided that 
$$
K=L=M=N>p^{\frac{1}{8}+\eta}.
$$
Observe that one more time the exponent $1/8$ appears as a threshold, 
see Remark~\ref{rem:1/8-treshold}, 
however this does not seem to follow from Theorem~\ref{thm:TypeI}.

\subsection{Consequences and applications} 
\label{sec:applications}
Theorems~\ref{thm:TypeI} and~\ref{thm:TypeII} describe general situations since the sequence $\bbeta$ satisfies no particular hypotheses. For instance, if  we choose $\balpha$ and $\bbeta$ in the landscape of multiplicative or additive characters modulo $p$, we obtain, as 
a direct consequence of Theorem~\ref{thm:TypeII},  the following bound for triple character sums on short initial segments.
To proceed, we use $\psi$ to denote a non-trivial additive character of $\F_p$.

\begin{corollary}
\label{cor:trilin rat func}
Fix a rational function $P$ in $\F_p(X)$ and a rational function $Q$ in $\F_p(X,Y)$.
For every fixed $\eta >0$ there exists some $\kappa>0$,   such that
$$
\sum_{k\leqslant K}\sum_{m\leqslant M}\sum_{n\leqslant N}\chi(k+mn)\psi(P(m)+Q(k,n))
\ll KMN p^{-\kappa}
$$
and
$$
\sum_{k\leqslant K}\sum_{m\leqslant M}\sum_{n\leqslant N}\chi((k+mn)P(m)Q(k,n))
\ll KMN p^{-\kappa}
$$
hold uniformly for
$$ p^{\frac 15 + \eta}\leq K, \, M, \, N <p^{\frac 14 -\eta}.
$$
\end{corollary}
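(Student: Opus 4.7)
My plan is to recast both sums as instances of $\fT(\boldsymbol\alpha,\boldsymbol\beta)$ from~\eqref{eq:trilinearcharactersum} with $a=b=1$ and invoke Theorem~\ref{thm:TypeII}. Since $\psi$ is additive and $\chi$ multiplicative, one has
$$
\psi(P(m)+Q(k,n))=\psi(P(m))\,\psi(Q(k,n)),
$$
and similarly $\chi((k+mn)P(m)Q(k,n))$ factors as $\chi(k+mn)\chi(P(m))\chi(Q(k,n))$, extending $\chi$ by $0$ at non-units. I would then set $\alpha_m=\psi(P(m))$ or $\chi(P(m))$, and $\beta_{k,n}=\psi(Q(k,n))$ or $\chi(Q(k,n))$, both vanishing at the $O(1)$ poles of $P$ and $Q$ respectively. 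This ensures $\|\boldsymbol\alpha\|_\infty\leq 1$ and $\|\boldsymbol\beta\|_2\leq (KN)^{1/2}$, and allows one and the same calculation to handle the additively- and the multiplicatively-twisted cases.

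Next, Theorem~\ref{thm:TypeII} gives a bound of the shape $KMN\cdot\Delta_2\cdot p^{o(1)}$ for each sum, so the task reduces to picking an integer $r\geq 2$ (depending only on $\eta$) so that $\Delta_2<p^{-\kappa}$ for some $\kappa=\kappa(\eta)>0$. In the range $p^{1/5+\eta}\leq K,M,N< p^{1/4-\eta}$, the quick checks
$$
KM^{-2}\leq p^{-3/20-3\eta}<1,\qquad \mathscr{L}_1 MN\leq 2(MN)^2\leq 2p^{1-4\eta},
$$
show that the first two brackets of $\Delta_2$ are $O(1)$; the dominant factor satisfies
$$
\frac{p^{1+1/r}}{K(MN)^2}\leq p^{1/r-5\eta}
$$
because $K(MN)^2\geq p^{1+5\eta}$, yielding a power saving once $r>1/(5\eta)$; finally, the tail $M^{-1/2}\leq p^{-1/10-\eta/2}$ is automatically small. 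Taking $r$ sufficiently large relative to $1/\eta$ both absorbs the $p^{o(1)}$ factor and secures the side condition $N>4p^{1/r}$.

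The hard part is nothing deep: it is merely the bookkeeping to verify that all four multiplicative contributions in $\Delta_2$ are controlled simultaneously. The lower bound $K,M,N\geq p^{1/5+\eta}$ is what forces the decisive factor $p^{1+1/r}/(K(MN)^2)$ below $1$, while the upper bound $K,M,N<p^{1/4-\eta}$ ensures the auxiliary factor $\mathscr{L}_1 MN p^{-1-1/r}$ stays bounded. No new ingredient beyond Theorem~\ref{thm:TypeII} is required, and the same splitting of $\boldsymbol\alpha$ and $\boldsymbol\beta$ works verbatim for both the additively and the multiplicatively twisted triple sums.
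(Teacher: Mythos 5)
Your proposal is correct and follows exactly the route the paper intends: the paper presents the corollary "as a direct consequence of Theorem~\ref{thm:TypeII}," which is precisely what you do. You factor the twist into $\alpha_m$ and $\beta_{k,n}$ (using additivity of $\psi$ in the first sum and complete multiplicativity of $\chi$, with $\chi(0)=0$, in the second, killing the $O(1)$ poles), note $\|\boldsymbol\alpha\|_\infty\leq 1$ and $\|\boldsymbol\beta\|_2\leq (KN)^{1/2}$ so that the Theorem~\ref{thm:TypeII} bound reduces to $KMN\,\Delta_2\,p^{o(1)}$, and then verify that in the stated range $K(MN)^2\geq p^{1+5\eta}$ drives the main bracket of $\Delta_2$ below a negative power of $p$ while the other factors stay bounded; choosing $r$ large in terms of $1/\eta$ closes the argument.
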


\begin{remark}
Theorem~\ref{thm:quadrilinearform} easily leads to an obvious quadrilinear version 
of Corollary~\ref{cor:trilin rat func},  but with $K,L,M$ and $N$ of size at least $p^{1/8+\eta}$
for any $\eta> 0$. 
\end{remark}

Furthermore, Iwaniec and  S{\'a}rk{\"o}zy~\cite{IS87} have considered the following multiplicative hybrid problem  
with positive integers: 
given two arbitrary subsets $\cM$ and $\cN$, how close is the product $mn$ to a   
square in $\Z$ with $(m,n)\in\cM\times\cN$? 
A special case of their general result asserts that
for any $\cM,\cN\subseteq[N,2N]$ with $\#\cM,\#\cN\gg N$, there exist $(m,n,\ell )\in\cM\times\cN\times\N$ satisfying
$$\frac{mn-\ell^2}{\ell}\ll (N/\log N)^{-\frac{1}{2}}.$$
Iwaniec and S{\'a}rk{\"o}zy~\cite{IS87} have also conjectured that the upper bound might be replaced by $N^{-1+o(1)}$. 

Theorem~\ref{thm:TypeII} allows us to study a modular analogue of the above result of 
Iwaniec and  S{\'a}rk{\"o}zy, for which the above conjectural bound $N^{-1+o(1)}$ can be realised in some particular cases. To set up, we consider subsets $\cM,\cN\subseteq[1,p]$, we examine the distance between $mn$, with $(m,n)\in\cM\times\cN$, and quadratic residues modulo $p$.

\begin{theorem}\label{thm:IS-analogue}
Fix a positive integer $r$, two real numbers $0<c_0<1$ and $\eta>0$. Then there exists a constant $P$, depending only on $(r,c_0,\eta)$, such that
\begin{itemize}
\item  for every prime $p\geqslant P$,
\item  for every $M$ and $N$ satisfying  
 \begin{equation}\label{condforMandN}
M^4N^2\geqslant p^{1+\frac{1}{r}+\eta},
\end{equation}
\item  for every subsets $\cM,\cN\subseteq[1,p]$ with 
\begin{equation}\label{lowerdensity}
\cM\subseteq [M,2M], \quad \cN\subseteq [N,2N], \quad \#\cM\geqslant c_0M,\quad \#\cN\geqslant c_0 N,
\end{equation}
\end{itemize}
there exist  $m\in \cM$, $n\in \cN$, and some positive integer $k$ satisfying 
$$
k \le p^{1+\frac{1}{r}+\eta}(MN)^{-2}+p^\eta
$$
such that $mn+k$ is a quadratic residue modulo $p$.
\end{theorem}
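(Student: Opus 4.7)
The plan is to count the triples $(m,n,k)\in\cM\times\cN\times[1,K_0]$ for which $mn+k$ is a nonzero quadratic residue modulo $p$, where
$$
K_0 := \bigl\lceil p^{1+1/r+\eta}(MN)^{-2}\bigr\rceil+\bigl\lceil p^\eta\bigr\rceil,
$$
and to show that this count is strictly positive for all $p$ sufficiently large in terms of $(r,c_0,\eta)$. Letting $\chi$ be the Legendre symbol modulo $p$, so that $\frac{1+\chi(x)}{2}$ is the indicator of a nonzero quadratic residue for $x\not\equiv 0\pmod p$, separating the main term from the oscillating part yields
$$
S \;=\; \frac{\#\cM\cdot \#\cN\cdot K_0}{2} \,+\, \frac{T}{2} \,+\, O\bigl(MN(1+K_0/p)\bigr),
$$
with
$$
T \;=\; \sum_{m\in \cM}\sum_{n\in \cN}\sum_{1\leqslant k\leqslant K_0}\chi(mn+k)
$$
a trilinear character sum of the shape~\eqref{eq:trilinearcharactersum}; the remainder accounts for the triples with $mn+k\equiv 0\pmod p$. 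By~\eqref{lowerdensity} the main term dominates once $|T|=o(MNK_0)$, which is what the rest of the proof must establish.

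I would invoke Theorem~\ref{thm:TypeII} on $T=\fT(\balpha,\bbeta)$ with $a=b=1$, $\alpha_m=\mathbf{1}_{\cM}(m)$ (supported on $m\leqslant 2M$), and $\beta_{k,n}=\mathbf{1}_{\cN}(n)$ (supported on $n\leqslant 2N$, $k\leqslant K_0$). Since $\|\balpha\|_\infty=1$ and $\|\bbeta\|_2\leqslant (2K_0 N)^{1/2}$, the theorem yields $|T|\ll MNK_0\,\Delta_2\,p^{o(1)}$. The decisive factor
$$
\Bigl(\frac{p^{1+1/r}}{K_0(MN)^2}\Bigr)^{1/(4r)}\leqslant p^{-\eta/(4r)}
$$
is a genuine power saving by the choice of $K_0$, while the hypothesis~\eqref{condforMandN} forces $p^{1+1/r+\eta}M^{-4}N^{-2}\leqslant 1$ and keeps the companion $(1+K_0 M^{-2})^{1/(4r)}$ of size $p^{o(1)}$. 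Provided the mixed factor $(1+(K_0+MN)MN\,p^{-1-1/r})^{1/(2r)}$ and the residual $M^{-1/2}$ are also under control in the main regime of parameters, one obtains $|T|\ll MNK_0\,p^{-\kappa}$ for some $\kappa=\kappa(\eta,r)>0$, and therefore $S>0$.

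The main obstacle is matching the parameter ranges allowed by~\eqref{condforMandN} to the technical hypotheses of Theorem~\ref{thm:TypeII}, in particular the condition $N>4p^{1/r}$ and the implicit constraint on $MN$ needed to tame the mixed factor. The remedy is to interchange the roles of the $\cM$- and $\cN$-indicators (using $mn=nm$) whenever $N$ is too small, so that the larger of the two sizes plays the role of ``$N$'' in Theorem~\ref{thm:TypeII}; one may also apply the theorem with an auxiliary exponent $r'\geqslant r$ different from the one appearing in the statement, provided $K_0\geqslant p^{1+1/r'+\eta}(MN)^{-2}$ still holds. The residual corner regime, where neither $M$ nor $N$ is large, is constrained by~\eqref{condforMandN} to an essentially bounded value of $r$, and can be handled separately, either by the Burgess bound applied to the innermost $k$-sum for fixed $(m,n)$, or by an additive-energy computation exploiting the near-uniform distribution of the products $mn\bmod p$.
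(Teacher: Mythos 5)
Your main line of attack is the right one — write $S$ as a main term plus the trilinear sum $T=\sum_{m,n,k}\chi(mn+k)$, then apply Theorem~\ref{thm:TypeII} with $\alpha_m=\mathbf{1}_{\cM}(m)$ and $\beta_{k,n}=\mathbf{1}_{[1,K_0]}(k)\mathbf{1}_{\cN}(n)$; this is exactly what the paper does, and the observation that $M^4N^2\geqslant p^{1+1/r+\eta}$ yields $K_0\leqslant M^2+O(p^\eta)$ and a genuine power saving $(p^{1+1/r}/(K_0(MN)^2))^{1/(4r)}\leqslant p^{-\eta/(4r)}$ is correct. However, you flag the edge cases as the obstacle and then propose remedies that do not work, which is a real gap.

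The first problematic regime is $(MN)^2>p^{1+1/r}$, where $K_0\asymp p^\eta$. Your suggestion to apply Burgess to the inner $k$-sum fails outright: Burgess needs $K_0\gg p^{1/4}$, but here $K_0$ is of size $p^\eta$ for arbitrarily small $\eta$. Your other suggestion, to interchange $\cM$ and $\cN$ when $N$ is too small, does not match the asymmetric hypothesis: after the swap one needs $N^2\geqslant K_0$, i.e.\ $M^2N^4\geqslant p^{1+1/r+\eta}$, which is \emph{not} implied by $M^4N^2\geqslant p^{1+1/r+\eta}$ (take $M$ large and $N$ small). The paper's resolution is different: when $(MN)^2>p^{1+1/r}$ one has $MN>p^{1/2+1/(2r)}$, so the products $mn$ land on more than $p^{1/2+\delta}$ residue classes with controlled multiplicity (the multiplicative energy $\sum_\ell\#\{(m,n):mn=\ell\}^2\ll (MN)^{1+o(1)}$ by the divisor bound), and Karatsuba's bilinear bound — Theorem~\ref{thm:Karatsuba} in the appendix, which allows one factor to be estimated via its $\ell_1$ and $\ell_2$ norms — then gives $T=o(KMN)$ because $K_0\geqslant p^\eta$ is above the secondary threshold in~\eqref{eq:PV-threshold}. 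Your phrase ``additive-energy computation'' might be gesturing at this, but it is the \emph{multiplicative} energy of $\cM\cdot\cN$ that matters, and you neither name Karatsuba's bound nor explain how to close the estimate. The second edge case, $M<p^{\eta/4}$, forces $N^2>p^{1+1/r}$ from~\eqref{condforMandN}, and hence falls back into the first regime; this also goes unaddressed in your sketch. Without the Karatsuba (or equivalent bilinear) step, the argument does not close.
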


Actually, the proof which is given in \S~\ref{sec:proofofIS-analogue}  produces a lower bound for the cardinality of the set of triples  we are interested in 
\begin{align*}
\#\{(m,n,k):~m \in \cM,\, n &\in \cN, \, k \leq K    \text{ and } mn+k \\ 
& \text{ is a quadratic residue}\bmod p\}  \gg KMN,
\end{align*}
where $K\sim p^{1+\frac{1}{r}+\eta}(MN)^{-2}$.

To illustrate Theorem~\ref{thm:IS-analogue}, choose  $M=N\sim p^{\frac{1}{5}+\delta}$ (with $\delta >0$ very small)  and  $\cM$ and $\cN $ subsets of $[N, 2N]$
such that $\#\cM,\#\cN \geqslant \delta N$. Then, for sufficiently large $p$,  there exists a positive integer $k\leq p^{\frac{1}{5}}$ 
and $(m,n)\in\cM\times\cN$ such that $mn + k$ is a quadratic residue modulo $p$.
Recall that there must exist a square of an integer between $t$ and $t+O (\sqrt t)$ for all $t>1$ (namely, $\lceil \sqrt t\rceil^2$ is such a square) and that essentially, nothing better than the upper bound  $p^{\frac{1}{4}+o(1)}$ is proved for the distance between two consecutive quadratic residues modulo $p$. The above illustration shows that one can do much better if squares are replaced by quadratic residues modulo $p$.

The proof of  Theorem~\ref{thm:IS-analogue} can be easily generalized in many aspects.
For instance one can relax the hypothesis~\eqref{lowerdensity} about the cardinalities of $\cM$ and $\cN$ by choosing 
\begin{equation}\label{defalphabeta}
 \begin{cases}
 \alpha_m = {\bf 1}_{[M,2M]\cap \cP}(m),\\
 \beta_{k,n} = {\bf 1}_{[K,2K]\cap \cP}(k)\cdot {\bf 1}_{[N,2N]\cap \cP}(n),
 \end{cases}
 \end{equation}
where $\cP$ denotes the set of all primes, and ${\bf 1}_{S}$ is the indicator function of a set $S$.
One then deduces from  Theorem~\ref{thm:TypeII} the following result by counting primes using Chebyshev's inequalities.
 
\begin{corollary} \label{CHEN} 
Fix $\eta>0$. For every sufficiently large $p$, there exist three primes $p_1,p_2$ and $p_3$ such that 
\begin{itemize}
\item    $p_1, p_2, p_3 \sim p^{{\frac 15}+\eta}$,
\item  $p_1p_2+p_3 $ is a quadratic non-residue modulo $p$.
\end{itemize}
\end{corollary}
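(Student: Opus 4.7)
The plan is to detect quadratic non-residues modulo $p$ with the Legendre symbol $\chi=(\,\cdot\,/p)$ and to control the arising weighted trilinear character sum via Theorem~\ref{thm:TypeII}. Fix a small $\eta>0$ and set $K=M=N$ with $M\sim p^{1/5+\eta}$. I would take the weights $\balpha$ and $\bbeta$ as in~\eqref{defalphabeta}, so that $\alpha_m$ and $\beta_{k,n}$ are prime indicators on $[M,2M]$ and $[K,2K]\times[N,2N]$ respectively. Chebyshev's inequalities then give $\|\balpha\|_\infty=\|\bbeta\|_\infty=1$, $\|\bbeta\|_2\asymp (KN)^{1/2}/\log p$, and the total weight
\begin{equation*}
T:=\sum_{k,m,n}\alpha_m\beta_{k,n}\asymp \frac{KMN}{(\log p)^3}.
\end{equation*}

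Write $\mathcal{N}$ for the number of triples of primes $(p_1,p_2,p_3)\in [M,2M]\times[N,2N]\times[K,2K]$ with $p_1p_2+p_3$ a quadratic non-residue modulo $p$. Using that $\tfrac12(1-\chi(x))$ equals $1$ on non-residues, $0$ on non-zero residues, and $\tfrac12$ on multiples of $p$, I would decompose
\begin{equation*}
\mathcal{N}=\tfrac12\bigl(T-\fT(\balpha,\bbeta)\bigr)-\tfrac12 Z,
\end{equation*}
where $\fT(\balpha,\bbeta)$ is the sum~\eqref{eq:trilinearcharactersum} with $a=b=1$ evaluated at these weights, and $Z$ counts the triples with $p_1p_2+p_3\equiv 0\pmod p$. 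Since $p_1p_2+p_3<5p^{2/5+2\eta}<p$ for small $\eta$ and large $p$, the quantity $Z$ vanishes.

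The decisive step is to verify $|\fT(\balpha,\bbeta)|=o(T)$, for which I would invoke Theorem~\ref{thm:TypeII} with an integer $r>1/(5\eta)$, so that in particular $N>4p^{1/r}$. With the chosen sizes one has $\mathscr{L}_1=K+MN\asymp p^{2/5+2\eta}$, and both brackets $(1+KM^{-2})^{1/(4r)}$ and $(1+\mathscr{L}_1 MN\,p^{-1-1/r})^{1/(2r)}$ are $\asymp 1$. The remaining factor satisfies
\begin{equation*}
\left(\frac{p^{1+1/r}}{K(MN)^2}\right)^{1/(4r)}\asymp p^{(1/r-5\eta)/(4r)},
\end{equation*}
a genuine power saving for $r>1/(5\eta)$, while $M^{-1/2}\asymp p^{-1/10-\eta/2}$ is even smaller. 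Combining these with $\|\bbeta\|_2\ll (KN)^{1/2}/\log p$ in Theorem~\ref{thm:TypeII} yields $|\fT(\balpha,\bbeta)|\ll KMN\,p^{-\kappa+o(1)}$ for some $\kappa=\kappa(\eta,r)>0$, which is dominated by $T$ for large $p$, so $\mathcal{N}>0$.

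The principal difficulty is that the exponent $1/5$ in the statement is no coincidence but is forced by the balance between the factors $K(MN)^2$ and $p^{1+1/r}$ inside $\Delta_2$: the margin is narrow, so $r$ must grow as $\eta\to 0$, and every ingredient of $\Delta_2$ has to be bookkept carefully. Once this numerical verification is carried out, the remaining steps (Chebyshev's bounds on $\pi$, orthogonality of $\chi$, and absorbing $\log p$ factors into $p^{o(1)}$) are standard.
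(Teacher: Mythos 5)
Your proposal is correct and matches the paper's intended argument: the paper explicitly says Corollary~\ref{CHEN} follows from Theorem~\ref{thm:TypeII} applied with the prime-indicator weights~\eqref{defalphabeta}, with Chebyshev's inequalities giving the lower bound on the main term, which is exactly your computation. The numerical verification of $\Delta_2=p^{-\kappa+o(1)}$ for $K=M=N\sim p^{1/5+\eta}$ and $r>1/(5\eta)$ is done correctly, and the observation that $Z=0$ (since $p_1p_2+p_3<p$) cleanly handles the zero term.
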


As we have mentioned,  Corollary~\ref{CHEN} follows directly from  Theorem~\ref{thm:TypeII} 
and it can be modified in various ways, for example one can impose different arithmetic restrictions 
on the primes $p_1, p_2, p_3$. For example, one further imposes
that the three shifted primes
$p_1+2, p_2+2, p_3 +2$ have at most two prime factors. This statement is deduced from the famous work of Chen~\cite{Ch73} about the twin prime conjecture that we write under the form of the following inequality
$$
\#\{ p\leqslant x :~p+2 \text{ has at most two prime factors}\} \gg x(\log 2x)^{-2},
$$
for $x\geqslant 2$.
One can also appeal to~\cite[Corollaire~2]{Fo82} to introduce a more involved definition of the coefficient $\beta_{k,n}$ (see~\eqref{defalphabeta}) leading to a new version of  Corollary~\ref{CHEN}, where we impose on $p_1p_3+2$ to have at most two prime factors.

Similarly, Theorem~\ref{thm:quadrilinearform}  leads to the following result (which can also be 
modified along the lines mentioned in the above).

\begin{corollary} \label{cor: p1234} 
Fix $\eta>0$. For every sufficiently large $p$, there exist four primes $p_1,p_2,p_3$ and $p_4$ such that 
\begin{itemize}
\item    $p_1, p_2, p_3, p_4 \sim p^{{\frac 18}+\eta}$,
\item  $p_1p_2+p_3 p_4$ is a quadratic non-residue modulo $p$.
\end{itemize}
\end{corollary}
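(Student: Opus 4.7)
The plan is to mimic the derivation of Corollary~\ref{CHEN} from Theorem~\ref{thm:TypeII}, but now starting from the quadrilinear bound of Theorem~\ref{thm:quadrilinearform} in the cleaner packaging of Corollary~\ref{cor:nontriv}. We may assume $\eta$ is small (say $\eta<1/8$); for larger $\eta$ the primes are so much bigger than $p^{1/4}$ that the conclusion is immediate by trivial equidistribution arguments.

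Fix $\eta>0$ and set $P\sim p^{\frac{1}{8}+\eta}$. Let $\chi$ denote the Legendre symbol modulo $p$, take $a=b=1$ and the four lengths $K=L=M=N=P$, and introduce the weights
$$
\alpha_{\ell,m}=\mathbf{1}_{\cP\cap[P,2P]}(\ell)\cdot\mathbf{1}_{\cP\cap[P,2P]}(m),\qquad \beta_{k,n}=\mathbf{1}_{\cP\cap[P,2P]}(k)\cdot\mathbf{1}_{\cP\cap[P,2P]}(n),
$$
where $\cP$ is the set of primes. Then $\fQ(\boldsymbol\alpha,\boldsymbol\beta)$ is exactly the difference between the number of tuples $(p_1,p_2,p_3,p_4)\in(\cP\cap[P,2P])^4$ for which $p_1p_2+p_3p_4$ is a quadratic residue modulo $p$ and the number for which it is a quadratic non-residue. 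The constraint $\eta<1/8$ ensures $p_1p_2+p_3p_4\leqslant 8P^2<p$, so no tuple is lost to $\chi(0)=0$.

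Next I would verify the hypotheses~\eqref{manycases} of Corollary~\ref{cor:nontriv}, after replacing $\eta$ by a suitably smaller $\eta'>0$ if needed. With $K=L=M=N=P\geqslant p^{\frac{1}{8}+\eta}$ every inequality collapses either to $P^8\geqslant p^{1+\eta'}$ (which holds for any $\eta'\leqslant 8\eta$) or to a trivially true statement such as $(M/K)^5(N/L)^3=1\leqslant p^{1-\eta'}$. Corollary~\ref{cor:nontriv} then supplies some $\kappa=\kappa(\eta)>0$ with
$$
|\fQ(\boldsymbol\alpha,\boldsymbol\beta)|\leqslant \|\boldsymbol\alpha\|_\infty\|\boldsymbol\beta\|_2\cdot LM(KN)^{\frac{1}{2}}\cdot p^{-\kappa}\ll \frac{P^{4}}{\log P}\,p^{-\kappa},
$$
where I used $\|\boldsymbol\beta\|_2\ll P/\log P$ coming from the Chebyshev upper bound $\#(\cP\cap[P,2P])\ll P/\log P$.

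To finish, I would compare this with the total count $(\#(\cP\cap[P,2P]))^4\gg (P/\log P)^4$ coming from the matching Chebyshev lower bound. Since $(\log P)^3=p^{o(1)}$ is absorbed by $p^{\kappa}$, the above bound on $|\fQ|$ is $o\bigl((P/\log P)^4\bigr)$, so both the QR-count and the QNR-count are positive for $p$ sufficiently large; in particular a QNR tuple exists, yielding four primes $p_1,p_2,p_3,p_4\sim p^{\frac{1}{8}+\eta}$ with $p_1p_2+p_3p_4$ a quadratic non-residue modulo $p$. The only step requiring any care is the routine verification of the seven inequalities in~\eqref{manycases}, which becomes trivial in the symmetric regime $K=L=M=N$, so I do not anticipate a genuine obstacle.
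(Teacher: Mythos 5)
Your proposal is correct and matches the paper's intended derivation: the paper does not spell out a proof of this corollary, stating only that it follows from Theorem~\ref{thm:quadrilinearform} ``similarly'' to Corollary~\ref{CHEN}, i.e.\ with prime-indicator weights as in~\eqref{defalphabeta}, and this is exactly what you do (routing through Corollary~\ref{cor:nontriv} is an equivalent repackaging of applying the theorem directly). Two cosmetic points worth tidying: the summation ranges should be taken as $2P$ rather than $P$ so that the support $[P,2P]$ of the weights sits inside the ranges $k\le K$, etc.\ (the paper has the same implicit convention in~\eqref{defalphabeta}), and the parenthetical dismissal of the case of larger $\eta$ via ``trivial equidistribution'' is not really a proof, though in fact your main argument via~\eqref{manycases} already covers all $\eta$ up to roughly $3/8$ without change and the cases of genuinely large $P$ are not the content of the corollary.
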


Another application of  Theorem~\ref{thm:quadrilinearform} concerns sums with the divisor 
function (see~\eqref{eq:tau} above)
$$
S(U,V) = \sum_{u\leqslant U} \sum_{v\leqslant V} \tau(u) \tau(v) \chi(u-v).
$$
These sums are two-dimensional analogues of the sum
$$
S_a(U) =  \sum_{u\leqslant U} \tau(u)  \chi(u+a),
$$
which has been studied in a number of works~\cite{BGKS13, Ch10, Ka71,Ka00}.  
In particular, it is shown in~\cite[Theorem~27]{BGKS13} that for any $\eta> 0$ there is some $\kappa > 0$  
such that for $U > p^{\frac{1}{3}+\eta}$, uniformly over integers $a$ with $\gcd(a,p)=1$,  one has
$$
S_a(U)  \ll Up^{-\kappa}.
$$
It is easy to see that Theorem~\ref{thm:quadrilinearform} combined with the 
standard completing technique (see~\cite[Section~12.2]{IK04}) implies the following result.

\begin{corollary} \label{cor:div sum} 
For any fixed $\eta> 0$, there is some $\kappa > 0$  
such that for $U, V > p^{\frac{1}{4}+\eta}$
we have
$$
S(U,V)  \ll UVp^{-\kappa}.
$$
\end{corollary}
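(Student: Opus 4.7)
The plan is to expand each divisor function as a divisor sum and then bound $S(U,V)$ by applying Theorem~\ref{thm:quadrilinearform} to the resulting quadrilinear character sums. Writing $\tau(u)=\sum_{k\ell=u}1$ and $\tau(v)=\sum_{mn=v}1$ gives
\[
S(U,V)=\sum_{\substack{k,\ell,m,n\ge 1\\ k\ell\leq U,\,mn\leq V}}\chi(k\ell-mn),
\]
and a dyadic decomposition of the four variables into ranges $k\sim K$, $\ell\sim L$, $m\sim M$, $n\sim N$ writes this as $O((\log p)^4)$ pieces $S_{K,L,M,N}$.

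For bulk dyadic tuples, meaning those with $4KL\leq U$ and $4MN\leq V$, the two hyperbolic cutoffs are automatic, so $S_{K,L,M,N}$ is literally of the form $\fQ(\balpha,\bbeta)$ in~\eqref{eq:quadrilinearcharactersum} with $a=1$, $b=-1$ and constant weights $\alpha_{\ell,m}=\beta_{k,n}=1$. Using the symmetry of $\chi(k\ell-mn)$ under $k\leftrightarrow\ell$ and $m\leftrightarrow n$, I will relabel so that the larger member of each pair $\{K,L\}$ and $\{M,N\}$ plays the role of $k$ and $n$ in the theorem; then the hypothesis $U,V>p^{1/4+\eta}$ ensures $K,N\ge p^{1/8+\eta/2}>4p^{1/r}$ for $r$ sufficiently large, and the inequalities in~\eqref{manycases} are verified throughout the effective range $p^\eta\le K,L,M,N\le p^{1/2-\eta}$. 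Corollary~\ref{cor:nontriv} then gives $|S_{K,L,M,N}|\ll KLMN\,p^{-\kappa}$ for some $\kappa=\kappa(\eta)>0$ uniformly in this range, while the extremal dyadic tuples outside it contribute at most $UV\,p^{-\eta/2}$ by a trivial count. Summing over the bulk tuples yields a total contribution $\ll UV\,p^{-\kappa/2}$.

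The remaining $O((\log p)^2)$ boundary dyadic tuples, where $4KL>U$ or $4MN>V$, form the technical core of the argument: the constraint $k\ell\leq U$ couples the variable $\ell$ inside $\alpha_{\ell,m}$ to the variable $k$ inside $\beta_{k,n}$ and so cannot be absorbed directly into either weight factor of $\fQ(\balpha,\bbeta)$. At this point the standard completing technique from~\cite[Section~12.2]{IK04} takes over: the sharp cutoff is expanded into an average of additive characters,
\[
\mathbf{1}[k\ell\leq U]=\frac{1}{p}\sum_{|h|\leq p/2}c_h(U)\,\ep(hk\ell),\qquad |c_h(U)|\leq\min\!\bigl(U,\tfrac{p}{2|h|}\bigr),
\]
and analogously for $\mathbf{1}[mn\leq V]$. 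The zero frequency recovers an unconstrained quadrilinear sum that is bounded as in the bulk. Each non-zero frequency $(h_1,h_2)$ produces a twisted quadrilinear sum which, via the identity $\ep(h_1k\ell)\chi(k\ell-mn)=\chi_{h_1}(k\ell-mn)\,\ep(h_1mn)$ with $\chi_{h_1}=\chi\cdot\ep(h_1\,\cdot\,)$, can be estimated by the same scheme as Theorem~\ref{thm:quadrilinearform}, since the Burgess-type short character-sum input that powers its proof remains valid for the twisted character $\chi_{h_1}$ and the residual bilinear weight $\ep(h_1mn)$ is absorbed by the Cauchy--Schwarz and van der Corput steps used there. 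Integrating against the Polya coefficients $c_{h_1}(U)c_{h_2}(V)/p^2$ produces, with only a $(\log p)^{O(1)}$ loss, a boundary contribution of size $O(UV\,p^{-\kappa'})$ for some $\kappa'>0$. Combining the bulk and boundary bounds gives $|S(U,V)|\ll UV\,p^{-\kappa''}$ for some $\kappa''=\kappa''(\eta)>0$, as required.

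The main obstacle is the boundary analysis, where the sharp cutoff $k\ell\leq U$ straddles the two weight factors of the quadrilinear form and thereby prevents a direct invocation of Theorem~\ref{thm:quadrilinearform}; the completing technique is precisely the device that trades this cutoff for a superposition of additive twists which can then be handled within the framework of the theorem.
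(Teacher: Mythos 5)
Your overall blueprint — expand $\tau$ into a divisor sum, decompose dyadically, handle the ``bulk'' tuples by reading off $\fQ(\balpha,\bbeta)$ with constant weights and invoking Corollary~\ref{cor:nontriv}, and handle the tuples near the hyperbola by a completing device — is the right shape, and you correctly identify the crux: the constraints $k\ell\leq U$ and $mn\leq V$ straddle the two weight slots $\alpha_{\ell,m}$ and $\beta_{k,n}$. However, both halves of your execution have real gaps.

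For the bulk tuples, the claim that $U,V>p^{1/4+\eta}$ forces $K,N\geq p^{1/8+\eta/2}$ after relabelling is false: in a bulk piece $4KL\leq U$, the product $KL$ can be anything from $1$ up to $U/4$, so even after picking the larger of $K,L$ you have no lower bound. Relatedly, the assertion that the extremal dyadic tuples (outside $p^{\eta}\leq K,L,M,N\leq p^{1/2-\eta}$) contribute only $UVp^{-\eta/2}$ ``by a trivial count'' is not true: for instance, summing the trivial bound $KLMN$ over all dyadic $K<p^{\eta}$ and $L\leq U/K$ already gives a contribution of order $\eta(\log p)^2\,UV$, not $UVp^{-\eta/2}$, because the factor $L\asymp U/K$ compensates for the smallness of $K$. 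These ranges need separate treatment — for very small $K$ one has $L>p^{1/4+\eta'}$ and can fall back on Burgess for the $\ell$-sum, and the system~\eqref{manycases} (including the constraint $MN\leq KLp^{1/3-\eta}$, which fails if $V/U$ is large) needs to be checked or circumvented case by case, e.g.\ invoking the cited bound of~\cite{BGKS13} for $S_a(V)$ when $V$ is much larger than $U$. None of this is fatal, but as written the verification is not there.

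The more serious problem is in the boundary analysis. Your identity $\ep(h_1k\ell)\,\chi(k\ell-mn)=\chi_{h_1}(k\ell-mn)\,\ep(h_1mn)$ with $\chi_{h_1}=\chi\cdot\ep(h_1\cdot)$ is correct as an algebraic rearrangement, but it does not remove the obstruction: the residual factor $\ep(h_1mn)$ depends on $m$ (an $\alpha$-variable) and $n$ (a $\beta$-variable) simultaneously, so it straddles the weight slots \emph{in exactly the same way as the original cutoff}. Your claim that this factor ``is absorbed by the Cauchy--Schwarz and van der Corput steps'' is not justified: after the Cauchy--Schwarz in $(k,n)$ and the shift $n\mapsto n+wv$ in the proof of Theorem~\ref{thm:quadrilinearform}, the extra phase becomes $\ep(h_1(n+wv)(m_1-m_2))$, which feeds a $v$-linear term $\ep(h_1w(m_1-m_2)v)$ into the amplified $v$-sum; the effective frequency $\xi$ then depends on $(w,m_1,m_2)$ and the regrouping-by-$(x_1,x_2)$ followed by H\"older no longer yields the sums $\Sigma_1,\Sigma_2,\Sigma_3$ as in the paper without a genuinely new argument. (The subsidiary issue that $\chi_{h_1}$ is not a multiplicative character is repairable — Weil's bound does extend to such mixed characters, so a variant of Lemma~\ref{lm:DavenportErdos2} holds — but the straddling residual phase is the real blocker.) The device that actually factorises the hyperbolic cutoff is \emph{multiplicative} completing — a Perron/Mellin-type representation of $\mathbf{1}[k\ell\leq U]$, using that $(k\ell)^{it}=k^{it}\ell^{it}$ splits across the two weight slots — not the additive Fourier expansion in $\ep(hk\ell)$, which does not factorise. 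As it stands, the boundary step is a genuine gap.
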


One can also use  Theorem~\ref{thm:quadrilinearform}  to estimate a rich 
variety of other quantities, for example, sums over primes
$$
W(x) = \sum_{p_1, p_2, p_3, p_4\le x} \(\frac{p_1}{p_3}\) \(\frac{p_2}{p_4}\)\chi \(p_1p_2-p_3p_4\)
$$
with weights given by Legendre symbols. Theorem~\ref{thm:quadrilinearform} implies a 
bound on $W(x)$ with a power saving, provided $x \ge p^{\frac{1}{8} + \eta}$ for  any fixed $\eta> 0$.We also note a variety of bounds  on characters over various arithmetic sequences 
can be found in a very informative survey of Karatsuba~\cite{Ka08}.

In closing this section, we would like to state a striking application of our general bounds in 
Theorem~\ref{thm:quadrilinearform} to character sums involving Farey fractions.
For $R\geqslant2$ let 
$$
\cF(R)= \{ r/s:~\gcd(r,s) = 1, \ 0\le r\leqslant s \le R\}
$$ 
be the set of Farey fractions of order $R$. This can be embedded in $\F_p$ in a canonical way
$r/s \mapsto r \overline s \pmod p$, where $ \overline s $ is the multiplicative inverse of $s$ modulo $p$ (which is well-defined for $R< p$ and in fact is injective for $R < p^{\frac{1}{2}}$).

By virtue of the multiplicativity of $\chi$, in the form
$$
\alpha_{\ell, m} \beta_{k, n}\chi (\ell \overline{m} -\overline{ k}n)=
\left( \alpha_{\ell, m} \overline{\chi} (m)\right) \left( \beta_{k, n}\overline{ \chi }(k) \right) \chi (k\ell -mn)
$$
 we may derive the following consequence directly from 
Theorem~\ref{thm:quadrilinearform}.

\begin{corollary} \label{cor:Farey} 
Let $R\geqslant2$ and $\xi_\rho, \zeta_\varrho$ be bounded weights
supported on $\cF(R)$. Then for any fixed $\eta> 0$, there is some $\kappa > 0$  
such that for $p^{\frac{1}{8}+\eta}\leqslant R<p^{\frac{1}{2}}$ we have
$$
\mathop{\sum\sum}_{\rho, \varrho \in \cF(R)} \xi_\rho \zeta_\varrho\chi(\rho - \varrho) \ll R^4 p^{-\kappa}.
$$ 
\end{corollary}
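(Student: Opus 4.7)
The plan is to rewrite the double sum over $\cF(R)\times\cF(R)$ as an instance of the quadrilinear form $\fQ(\balpha,\bbeta)$ with parameters $K=L=M=N=R$ and $a=1$, $b=-1$, and then invoke Theorem~\ref{thm:quadrilinearform} (in the form already packaged as Corollary~\ref{cor:nontriv}).

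First I parametrise every Farey fraction by its reduced representative, writing
$\rho = \ell\overline{m}$ and $\varrho = n\overline{k}$ with $(\ell,m),(n,k)\in\Z^2$ satisfying $0\le\ell\le m\le R$, $0\le n\le k\le R$, $\gcd(\ell,m)=\gcd(n,k)=1$. Since $R<p^{\frac12}<p$, the denominators $m,k$ are automatically coprime to $p$, so their inverses modulo $p$ exist and the map $r/s\mapsto r\overline{s}$ is injective on $\cF(R)$. The identity quoted in the paper, namely
$$
\chi(\ell\overline{m}-\overline{k}n)=\bar\chi(m)\,\bar\chi(k)\,\chi(k\ell-mn),
$$
is valid also when $\ell\overline{m}=\overline{k}n$ (both sides vanish), so it applies term by term. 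Define the new weights
$$
\alpha_{\ell,m}=\xi_{\ell/m}\,\bar\chi(m)\,\mathbf{1}_{\{0\le\ell\le m,\,\gcd(\ell,m)=1\}},\qquad
\beta_{k,n}=\zeta_{n/k}\,\bar\chi(k)\,\mathbf{1}_{\{0\le n\le k,\,\gcd(k,n)=1\}}.
$$
Since $\xi_\rho,\zeta_\varrho$ are bounded, so are $\balpha$ and $\bbeta$: one has $\|\balpha\|_\infty\ll 1$ and
$$
\|\bbeta\|_2^{\,2}=\sum_{k,n\le R}|\beta_{k,n}|^2\ll \#\cF(R)\ll R^2.
$$
After these manipulations the double sum of the corollary becomes exactly
$$
\sum_{\ell,m,k,n\le R}\alpha_{\ell,m}\beta_{k,n}\chi(k\ell-mn)=\fQ(\balpha,\bbeta)
$$
with $K=L=M=N=R$ and $(a,b)=(1,-1)$.

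Next I apply Corollary~\ref{cor:nontriv}. With $K=L=M=N=R$, each of the seven conditions in~\eqref{manycases} reduces to an inequality of the form $R^\alpha\ge p^{\gamma\eta}$ or $R^\alpha\le p^{1-\gamma\eta}$ for some constants $\alpha,\gamma>0$; the binding one is $(KL)^3MN=R^8\ge p^{1+\eta}$ (equivalently $K^5L^3N\ge Mp^{1+\eta}=Rp^{1+\eta}$), which holds provided $R\ge p^{\frac18+\eta/8}$, while the upper constraint $MN\le KLp^{\frac13-\eta}$ is trivially met since $R<p^{\frac12}$. Therefore, shrinking $\eta$ if necessary, for $p^{\frac18+\eta}\le R<p^{\frac12}$ one obtains
$$
|\fQ(\balpha,\bbeta)|\le \|\balpha\|_\infty\|\bbeta\|_2\,LM(KN)^{\frac12}\,p^{-\kappa}\ll R\cdot R^2\cdot R\cdot p^{-\kappa}=R^4p^{-\kappa},
$$
which is the claim. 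The only point that required thought was the initial algebraic manipulation that strips the inverses $\overline{m},\overline{k}$ off the character and converts the Farey difference into a bilinear form $k\ell-mn$ of the numerators and denominators; once this is done the estimate is a direct quotation of Theorem~\ref{thm:quadrilinearform}.
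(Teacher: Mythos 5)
Your proof is correct and follows the same route the paper intends: the one-line multiplicativity identity stated just before the corollary strips the inverses $\overline{m},\overline{k}$ into the weights, reducing the Farey sum to $\fQ(\balpha,\bbeta)$ with $K=L=M=N=R$, $\|\balpha\|_\infty\ll 1$, $\|\bbeta\|_2\ll R$, after which Theorem~\ref{thm:quadrilinearform} (via Corollary~\ref{cor:nontriv}) applies. Your remark that Corollary~\ref{cor:nontriv}'s normalisation $a=b=1$ extends to $a=1,b=-1$ is justified because $\mathscr{L}_2$ and the five-term decomposition of $\Delta_3$ depend only on $|a|$ and $|b|$, so all the verifications carry over unchanged.
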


Note that the use of bilinear bounds~\eqref{eq:PV-threshold} and~\eqref{eq:Karatsuba-implicit} would lead to a 
much more restrictive 
condition $p^{\frac{1}{4} + \eta}\leqslant R<p^{\frac{1}{2}}$ in Corollary~\ref{cor:Farey}.

\section{Preliminaries}

\subsection{Moments of some character sums} 
The following result  is a consequence of Riemann Hypothesis for curves over finite fields due to Weil~\cite{We48}.
 It is a slight extension of a classical bound due to 
Davenport and Erd\H{o}s~\cite{DE52},  which corresponds to the case when  $\cA$ is an interval and $\boldsymbol\gamma=1$. The proof of Lemma~\ref{lm:DavenportErdos} below is identical, and is 
also a part of the arguments in Appendix~\ref{appendix:Karatsuba}.

\begin{lemma}\label{lm:DavenportErdos}
Let $\cA\subseteq\F_p$ be a subset of cardinality $A$, and $\chi$ a non-trivial multiplicative character of $\F_p^\times$. For each positive integer $r$ and any complex-valued weight $\boldsymbol\gamma=(\gamma_a)$ with $\|\boldsymbol\gamma\|_\infty\leqslant1$, we have
$$
\sum_{x\in\F_p}\left| \sum_{a\in\cA}\gamma_a\chi(x+a)\right|^{2r}\leqslant (2r)^r(A^rp+A^{2r}p^{\frac{1}{2}}).
$$
\end{lemma}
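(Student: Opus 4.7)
The plan is to expand the $2r$-th power, convert the resulting inner sum over $x$ into a character sum of a rational function, split the outer sum according to whether the rational function is trivial or not, and bound the two contributions separately using a diagonal count and Weil's bound respectively.

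First, I would expand:
\begin{align*}
\sum_{x\in\F_p}\Bigl|\sum_{a\in\cA}\gamma_a\chi(x+a)\Bigr|^{2r}
&=\sum_{\vec a,\vec b\in\cA^r}\gamma_{a_1}\cdots\gamma_{a_r}\overline{\gamma_{b_1}\cdots\gamma_{b_r}}\, S(\vec a,\vec b),
\end{align*}
where
$$
S(\vec a,\vec b)=\sum_{x\in\F_p}\chi\!\left(\frac{\prod_{i=1}^r(x+a_i)}{\prod_{j=1}^r(x+b_j)}\right),
$$
with the convention that the summand vanishes when any $x+b_j\equiv 0\pmod p$. Since $\|\bgamma\|_\infty\le 1$, it is enough to bound $\sum_{\vec a,\vec b}|S(\vec a,\vec b)|$.

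Next I would partition the pairs $(\vec a,\vec b)$ into two classes. In the \emph{diagonal} class the multisets $\{a_1,\ldots,a_r\}$ and $\{b_1,\ldots,b_r\}$ coincide; the rational function inside $\chi$ is then identically $1$ where defined, so $|S(\vec a,\vec b)|\le p$, and the number of such pairs is at most $r!\,A^r$ (fix $\vec a$ freely, then $\vec b$ is one of at most $r!$ rearrangements). The total diagonal contribution is therefore at most $r!\,A^r p$. In the \emph{off-diagonal} class the two multisets differ; after cancellation, the rational function $\prod(x+a_i)/\prod(x+b_j)$ is non-constant, and a short verification shows that it is not a perfect $d$-th power for $d=\operatorname{ord}(\chi)$ (indeed, some residue class modulo $p$ appears with nonzero net multiplicity of absolute value at most $r$, and one can reduce to the case where this obstruction is genuine). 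Weil's bound then gives $|S(\vec a,\vec b)|\le(2r-1)\sqrt p$, and there are at most $A^{2r}$ such pairs, contributing at most $(2r-1)A^{2r}\sqrt p$.

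Combining the two bounds,
$$
\sum_{x\in\F_p}\Bigl|\sum_{a\in\cA}\gamma_a\chi(x+a)\Bigr|^{2r}\le r!\,A^r p+(2r-1)A^{2r}p^{1/2},
$$
and the stated inequality follows from $r!\le r^r\le(2r)^r$ and $2r-1\le(2r)^r$. The main point requiring care is the verification of the hypothesis of Weil's theorem in the off-diagonal case, namely that the rational function is not a $d$-th power; this is the only non-routine step, and it is handled by tracking residue multiplicities, using that any imbalance strictly between $0$ and $d$ prevents the function from being a perfect $d$-th power (a standard issue which is either absorbed into the $r!\,A^r$ count or dispatched by a pigeonholing on repeated indices).
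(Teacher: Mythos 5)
Your overall strategy is the one the paper uses (it gives a full argument for the companion Lemma~\ref{lm:DavenportErdos2} and says Lemma~\ref{lm:DavenportErdos} is proved identically): open the $2r$-th power, split the pairs $(\vec a,\vec b)$ into a small exceptional class bounded trivially by $p$ and a large class bounded by Weil. The structure is right, but there is a genuine gap in your treatment of the "off-diagonal" class, and your final numerical bound $r!\,A^r p+(2r-1)A^{2r}p^{1/2}$ is not actually what the argument yields.

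The problem is that "the multisets of $\vec a$ and $\vec b$ differ" does \emph{not} imply the rational function $\prod(x+a_i)/\prod(x+b_j)$ fails to be a $d$-th power, where $d=\operatorname{ord}(\chi)$. Writing $n_{\vec a}(c)=\#\{i:a_i=c\}$ and similarly $n_{\vec b}(c)$, the function is a $d$-th power precisely when $n_{\vec a}(c)\equiv n_{\vec b}(c)\pmod d$ for every $c$, and when $d\le r$ this can hold with $n_{\vec a}\ne n_{\vec b}$. A concrete example: if $\chi$ has order $3$ and $r=3$, take $\vec a=(a,a,a)$ and $\vec b=(b,b,b)$ with $a\ne b$; then $\prod(x+a_i)/\prod(x+b_j)=(x+a)^3/(x+b)^3$ is a perfect cube, $S(\vec a,\vec b)=p-2$, and the multisets are disjoint. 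Your parenthetical remark acknowledges the issue ("any imbalance strictly between $0$ and $d$..."), but neither of the two escape routes you offer works as stated: these tuples are not absorbed into the $r!\,A^r$ count (they literally fall outside it), and "pigeonholing on repeated indices" is not an argument until one actually produces a bound on the number of such tuples and incorporates it into the diagonal contribution.

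To repair this, you should enlarge the exceptional class. A convenient choice is: all $(\vec a,\vec b)$ for which $n_{\vec a}(c)\equiv n_{\vec b}(c)\pmod d$ for every $c$. Since $d\ge 2$, any value $c$ that occurs at all satisfies $n_{\vec a}(c)+n_{\vec b}(c)\ge 2$, hence at most $r$ distinct values appear in the combined $2r$-tuple; counting these tuples yields a bound of the form $C(r)A^r$ with $C(r)\le(2r)^r$ (the paper bounds a closely related count by $r\binom{2r}{r}\le(2r)^r$). For all remaining pairs Weil's bound legitimately gives $|S(\vec a,\vec b)|\le(2r-1)\sqrt p$ and there are at most $A^{2r}$ of them. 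That is where the coefficient $(2r)^r$ in the statement comes from; your $r!\,A^r$ undercounts the exceptional class and so is not justified.
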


We also need the following version of Lemma~\ref{lm:DavenportErdos}, which again essentially
repeats the argument in~\cite{DE52} but uses {\it Weil's bound} for multiplicative 
character sums with polynomials; see, for example,~\cite[Theorem~11.23]{IK04}. 

\begin{lemma}\label{lm:DavenportErdos2}
Let $\cA\subseteq\F_p$ be a subset of cardinality $A$,  and $\chi_1,\chi_2$ two non-trivial multiplicative characters of $\F_p^\times$. For each positive integer $r$ and any complex-valued weight $\boldsymbol\gamma=(\gamma_a)$ with $\|\boldsymbol\gamma\|_\infty\leqslant1$, we have
$$
\mathop{\sum\sum}_{x,y\in\F_p}\left| \sum_{a\in\cA}\gamma_a\chi_1(x+a)\chi_2(y+a)\right| ^{2r}\leqslant (2r)^r(A^rp^2+2rA^{2r}p).
$$
\end{lemma}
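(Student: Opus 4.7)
The plan is to follow the Davenport--Erd\H{o}s approach of Lemma~\ref{lm:DavenportErdos}, now deploying Weil's bound independently on each of the two characters. First I would expand the $2r$-th power via $|z|^{2r}=z^r\overline{z}^r$ and interchange summations to write
$$
\sum_{x,y\in\F_p}\Bigl|\sum_{a\in\cA}\gamma_a\chi_1(x+a)\chi_2(y+a)\Bigr|^{2r}
=\sum_{\vec a,\vec b\in\cA^r}\gamma_{\vec a}\overline{\gamma_{\vec b}}\,S_1(\vec a,\vec b)\,S_2(\vec a,\vec b),
$$
where $\gamma_{\vec a}=\prod_i\gamma_{a_i}$ and
$$
S_j(\vec a,\vec b)=\sum_{x\in\F_p}\chi_j\Bigl(\prod_i(x+a_i)\Bigr)\overline{\chi_j}\Bigl(\prod_i(x+b_i)\Bigr),\qquad j=1,2.
$$
The crucial structural observation is that the $(x,y)$-sum splits as a product $S_1\cdot S_2$ of two independent one-variable character sums, so Weil's bound can be applied separately to each.

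Next I would split the tuples $(\vec a,\vec b)$ into \emph{diagonal} ones (where $\vec a$ is a permutation of $\vec b$) and \emph{off-diagonal} ones. For diagonal tuples the rational function collapses to $1$, so the trivial estimate $|S_j|\le p$ applies to each factor; combined with the count of at most $r!A^r\le(2r)^rA^r$ such tuples, this contributes at most $(2r)^r A^r p^2$, matching the first term of the claim. For off-diagonal tuples I would invoke \cite[Theorem~11.23]{IK04} on the auxiliary polynomial $\prod_i(X+a_i)\prod_i(X+b_i)^{d_j-1}$ (equivalently, on the rational function $\prod(X+a_i)/\prod(X+b_i)$ of degree~$2r$), where $d_j=\ord(\chi_j)$, obtaining $|S_j|\le(2r-1)\sqrt{p}$ whenever this rational function is not a $d_j$-th power in $\F_p(X)^\times$. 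Using this bound on both factors yields $|S_1S_2|\le(2r-1)^2 p$ and, summed over the at most $A^{2r}$ off-diagonal tuples, a total of at most $(2r)^{r+1}A^{2r}p$, matching the second term.

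The main obstacle is the subcase where the rational function happens to be a $d_j$-th power for some $j$, which invalidates Weil's bound for $S_j$. This can occur outside the diagonal only when $d_j\le r$, since the multiplicity congruence $m_c^a\equiv m_c^b\pmod{d_j}$ together with $|m_c^a-m_c^b|\le r$ forces $\vec a$ to be a permutation of $\vec b$ whenever $d_j>r$. In the residual case $d_j\le r$ I would control such ``pseudo-diagonal'' tuples by a direct combinatorial count of those $\vec b\in\cA^r$ whose multiplicity vector is congruent to that of a given $\vec a$ modulo $d_j$ (via orthogonality of characters on $\Z/d_j\Z$), and would still apply Weil's bound to the remaining character; the resulting contribution stays within the slack afforded by the $(2r)^{r+1}$ constant. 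Combining the diagonal and off-diagonal estimates then yields the claimed inequality.
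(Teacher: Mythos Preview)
Your approach is essentially identical to the paper's: expand the $2r$-th power, exploit the factorisation of the double sum over $(x,y)$ into a product $S_1(\vec a,\vec b)\,S_2(\vec a,\vec b)$, bound the ``paired'' tuples trivially by $p^2$ and the remaining ones by Weil on each factor. You are in fact more scrupulous than the paper about the low-order subcase $d_j\le r$, which the paper's proof glosses over; your proposed fix is sound, since the pseudo-diagonal tuples still number only $O_r(A^r)$ (each value in the combined $2r$-tuple must occur with multiplicity at least $2$), so their contribution $O_r(A^r p^{3/2})$ is absorbed by the $A^r p^2$ term.
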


\begin{proof}
Denote by $S$ the quantity in question. Opening the power, we write
$$
S
\leqslant\mathop{\sum\sum}_{\ba,\bb\in\cA^r}|S(\ba,\bb;\chi_1)S(\ba,\bb;\chi_2)|,
$$
where for $\ba=(a_1,\cdots,a_r)\in\cA^r$, $\bb=(b_1,\cdots,b_r)\in\cA^r$ and each non-trivial multiplicative character $\chi$ of $\F_p^\times$,
$$
S(\ba,\bb;\chi)=\sum_{x\in\F_p}\prod_{1\leqslant j\leqslant r}\chi(x+a_j)\overline{\chi(x+b_j)}.
$$
The subsequent treatment is uniform in all non-trivial multiplicative characters $\chi$ of $\F_p^\times$.
If the coordinates of $\ba,\bb$ appear in pairs (with necessary permutations), we appeal to the trivial bound
$$
|S(\ba,\bb;\chi)|\leqslant p.
$$
Note that the number of such tuples $(\ba,\bb)$ is at most $r\binom{2r}{r}A^r\leqslant (2rA)^r$. For the remaining $\ba,\bb$, we can apply Weil's bound for complete character sums (see~\cite[Corollary~11.24]{IK04}), getting
$$
|S(\ba,\bb;\chi)|\leqslant 2rp^{\frac{1}{2}}.
$$
This completes the proof of Lemma~\ref{lm:DavenportErdos2} by taking all possibilities of $\ba,\bb$ into account.
\end{proof}

\subsection{Bounds of some  GCD sums}

We need the following estimate.

\begin{lemma}\label{lm:gcdsum}
Let $a,b$ be non-zero integers. Let $A,B,K,L,M,N,U,W\geqslant1$ with $A\ll  |a|KL  + |b|MN$ and $B\ll LU+ MW$. Then we have
\begin{multline*}
\mathop{\sum_{k\leqslant K}\mathop{\sum\sum}_{\ell_1,\ell_2\leqslant L}\mathop{\sum\sum}_{m_1,m_2\leqslant M}\sum_{n\leqslant N}\sum_{u\sim U}\sum_{w\sim W}}_{\substack{ak\ell_\nu + bm_\nu n=a_\nu  ,~a\ell_\nu u+bm_\nu w=b_\nu, \ \nu =1,2 \\ 
|a_1|\sim A,~|b_2|\sim B,~kw\neq nu}}\gcd(a_1,b_1)\gcd(a_2,b_2)
\leqslant ABLM(KW+NU)\Upsilon^{o(1)}
\end{multline*}
with
$\Upsilon=|ab|ABKLMNUW$.
\end{lemma}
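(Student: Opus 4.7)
The strategy combines an algebraic identity that ties the pair of GCDs to the quantity $D := kw-nu$, a trivial bound on the product of GCDs, and a divisor-function count for the diophantine equation $kw-nu = D$.

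Eliminating $m_\nu$ (resp.\ $\ell_\nu$) in the two defining relations yields the key identities
\[
w a_\nu - n b_\nu = a \ell_\nu D, \qquad k b_\nu - u a_\nu = b m_\nu D, \qquad \nu = 1, 2,
\]
and, combining these, the further identity
\[
a_1 b_2 - a_2 b_1 = a b D (\ell_1 m_2 - \ell_2 m_1).
\]
Under the hypothesis $kw \neq nu$, i.e.\ $D \neq 0$, the map $(\ell_\nu, m_\nu) \leftrightarrow (a_\nu, b_\nu)$ is a bijection over $\mathbb Q$ for fixed $(k, n, u, w)$, with Jacobian $|abD|$; in particular $\gcd(a_\nu, b_\nu)$ divides each of $a \ell_\nu D$ and $b m_\nu D$, and $\gcd(a_1, b_1)\gcd(a_2, b_2)$ divides $abD(\ell_1 m_2 - \ell_2 m_1)$.

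The plan is then to invoke the elementary bounds $\gcd(a_1, b_1) \leq |a_1| \ll A$ and $\gcd(a_2, b_2) \leq |b_2| \ll B$, yielding $\gcd(a_1, b_1)\gcd(a_2, b_2) \ll AB$. The sum is thus bounded by $AB\,\mathcal N$, where $\mathcal N$ counts admissible tuples; it suffices to show $\mathcal N \ll LM(KW + NU)\,\Upsilon^{o(1)}$. For this I fiber over $D \neq 0$: Dirichlet's divisor estimate gives that for each fixed $D$, the number of $(k, n, u, w)$ in the prescribed ranges with $kw - nu = D$ is $\ll (KW + NU)\,\Upsilon^{o(1)}$, since fixing $(k, w)$ turns the condition into $nu = kw - D$, with $\tau(|kw - D|) = \Upsilon^{o(1)}$ factorizations, and symmetrically after fixing $(n, u)$. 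For fixed $(k, n, u, w)$ with $D \ne 0$, the bijection above combined with the size constraints $|a_1| \sim A$ and $|b_2| \sim B$ confines $(a_\nu, b_\nu)$ to thin strips meeting a sublattice of $\mathbb Z^2$ of index $|abD|$; standard lattice-point counting controls the number of valid $(\ell_\nu, m_\nu) \in [1, L] \times [1, M]$.

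\emph{Main obstacle.} The delicate technical point is to carry out the final summation over $D$ without introducing an extra $(KW + NU)$ or $|D|$ factor, since a priori $|D|$ can be as large as $KW + NU$. The resolution likely relies on the identity $a_1 b_2 - a_2 b_1 = abD(\ell_1 m_2 - \ell_2 m_1)$ and the size constraints to restrict $|D|$ to an effective range, together with a separate treatment of the degenerate locus $\ell_1 m_2 = \ell_2 m_1$ (which imposes an additional rank-one linear constraint cutting the count). Putting these pieces together, summing over $D$ dyadically, and absorbing divisor-function factors into $\Upsilon^{o(1)}$, should deliver the stated bound.
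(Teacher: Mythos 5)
The central step of your plan fails: the reduction to $\mathcal N\ll LM(KW+NU)\,\Upsilon^{o(1)}$ after discarding the GCD weights via $\gcd(a_1,b_1)\gcd(a_2,b_2)\ll AB$ is not true. Take for concreteness $a=b=1$, $K=N=U=W$, $L=M$, and $A\asymp KL$, $B\asymp LU$. Then a positive proportion of tuples $(k,\ell_1,m_1,n)$ have $k\ell_1+m_1 n\sim KL$, and a positive proportion of $(\ell_2,m_2,u,w)$ have $\ell_2 u+m_2 w\sim LU$, so $\mathcal N\asymp K^4L^4$ (up to $\Upsilon^{o(1)}$ factors and the negligible exclusion of $kw=nu$), whereas $LM(KW+NU)\asymp K^2L^2$. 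In other words, in the regime where $A,B$ are of their natural maximal size, $\mathcal N$ is itself already of size $\asymp ABLM(KW+NU)$, i.e.\ of the same order as the target right-hand side, and $AB\,\mathcal N$ overshoots by the full factor $AB$. The underlying reason is that $\gcd(a_\nu,b_\nu)$ is typically of size $\Upsilon^{o(1)}$, not of size $A$ or $B$; bounding it by its worst case and then counting unconstrained tuples cannot work.

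Your algebraic identities are correct and in fact contain exactly the right information: from $wa_\nu-nb_\nu=a\ell_\nu D$ and $kb_\nu-ua_\nu=bm_\nu D$ one gets $\gcd(a_\nu,b_\nu)\mid a\ell_\nu D$ and $\gcd(a_\nu,b_\nu)\mid bm_\nu D$, hence $\gcd(a_\nu,b_\nu)\mid a\ell_\nu m_\nu D$. But this divisibility must be exploited \emph{inside} the sum over GCD values, not discarded. The paper does precisely this: it writes $G\le\sum_{g_1\ll A}\sum_{g_2\ll B}g_1g_2\cdot N(g_1,g_2)$, where $N(g_1,g_2)$ counts tuples with $g_\nu\mid a_\nu$ and $g_\nu\mid b_\nu$, then decomposes each $g_\nu=d_\nu e_\nu f_\nu$ with $d_\nu\mid a\ell_\nu$, $e_\nu\mid m_\nu$, $f_\nu\mid(kw-nu)$, and uses these divisibility constraints (after changes of variables $\ell_\nu\to d_\nu\ell_\nu$, $m_\nu\to e_\nu m_\nu$) to show that $N(g_1,g_2)$ decays fast enough in $g_1,g_2$ that the weighted sum converges to $ABLM(KW+NU)\Upsilon^{o(1)}$. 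The factor $AB$ in the final bound then arises from the count of the residual free variables after the GCD has been peeled off (a lattice-point count of $\ll AB/(d_1d_2e_1e_2f_1f_2)$), not from a worst-case bound on the GCDs themselves. Your fibering over $D=kw-nu$ and the identity $a_1b_2-a_2b_1=abD(\ell_1m_2-\ell_2m_1)$ are potentially useful ingredients, but the proof cannot succeed without replacing the trivial GCD bound by an actual summation over GCD values weighted by the divisibility-constrained counts.
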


\begin{proof}
Denote by $G$ the above quantity in question.
Writing $g_1 = \gcd(a_1, b_1)$ and $g_2 = \gcd(a_2,b_2)$ we have the inequality 
\begin{align*}
G
&\leqslant \mathop{\sum\sum}_{g_1\ll A,\, g_2\ll B}g_1g_2
\mathop{\sum_{k\leqslant K}\mathop{\sum\sum}_{\ell_1,\ell_2\leqslant L}\mathop{\sum\sum}_{m_1,m_2\leqslant M}\sum_{n\leqslant N}\sum_{u\sim U}\sum_{w\sim W}}_{\substack{ak\ell_\nu  +b m_\nu  n\equiv a\ell_\nu  u + bm_\nu  w \equiv0\bmod{g_\nu}, \ \nu =1,2 \\
|ak\ell_1+bm_1n|\sim A,~|a\ell_2u+bm_2w|\sim B,~kw\neq nu}}1.
\end{align*}
The  congruences in the summation  imply
$$g_\nu \mid m_\nu w (ak\ell_\nu +bm_\nu n)-m_\nu n (a\ell_\nu u + bm_\nu w) =a\ell_\nu m_\nu(kw - nu)$$
for $\nu =1,2$,  so that we can decompose $g_\nu$ (in  a not  necessarily unique way)   as $$g_\nu = d_\nu e_\nu f_\nu$$ where 
$$
d_\nu \mid a\ell_\nu, \qquad e_\nu \mid  m_\nu,  \qquad f_\nu \mid (kw-nu).
$$
In particular, we have 
$$
\lcm[f_1,f_2]\le 2(KW+NU).
$$

Therefore, we have   
$$
G\leqslant \mathop{\sum\sum}_{\substack{d_1e_1f_1\ll A\\ d_2e_2f_2\ll B\\ \lcm[f_1,f_2]\le 2(KW+NU)}}d_1d_2e_1e_2f_1f_2\mathop{\sum_{k\leqslant K} \sum_{n\leqslant N}\sum_{u\sim U}\sum_{w\sim W}}_{\lcm[f_1,f_2]\mid (kw-nu)\neq0}\mathop{\mathop{\sum\sum\sum\sum}_{\ell_1,\ell_2\leqslant L,~m_1,m_2\leqslant M}}_{\substack{ak\ell_1 + bm_1 n
\equiv 0\bmod{d_1 e_1 f_1 } \\
a\ell_2u+bm_2w\equiv0\bmod{d_2e_2f_2}\\
\ell_\nu \equiv0\bmod{d_\nu },\, m_\nu \equiv0\bmod{e_\nu },\ \nu =1,2 \\
|ak\ell_1+ bm_1n|\sim A,~|a\ell_2u+bm_2w|\sim B}}1.
$$
Making the change of variables $\ell_\nu \to d_\nu\ell_\nu$,  $m_\nu \to e_\nu m_\nu$, $\nu =1,2$, 
we obtain 
$$
G\leqslant \mathop{\sum\sum}_{\substack{d_1e_1f_1\ll A\\ d_2e_2f_2\ll B\\ \lcm[f_1,f_2]\le 2(KW+NU)}}d_1d_2e_1e_2f_1f_2
\mathop{\sum_{k\leqslant K}\sum_{k\leqslant N}\sum_{u\sim U}\sum_{w\sim W}}_{\lcm[f_1,f_2]\mid (kw-nu)\neq0}
\mathop{\mathop{\sum\sum\sum\sum}_{d_1\ell_1,\,d_2\ell_2\leqslant L,~e_1m_1,\,e_2m_2\leqslant M}}_{\substack{ad_1\ell_1k +be_1m_1 n \equiv0\bmod{d_1 e_1 f_1}  \\
ad_2 \ell_2u+be_2 m_2w\equiv0\bmod{d_2e_2f_2}\\
|ad_1\ell_1k+be_1m_1 n |\sim A,~|ad_2 \ell_2u+be_2m_2w|\sim B}}1.
$$ 
Note that  the congruences 
$$
ad_1 \ell_1k +be_1m_1 n \equiv0\bmod{d_1 e_1 f_1} \mand 
ad_2 \ell_2u+be_2 m_2w\equiv0\bmod{d_2e_2f_2}
$$ 
imply 
$$
e_1 \mid ad_1\ell_1k, \qquad d_2\mid be_2 m_2 w,
$$
and
$$
bm_1n\equiv-ad_1\ell_1 k/e_1\bmod{d_1f_1} ,\qquad
a\ell_2u\equiv -be_2 m_2w/d_2\bmod{e_2f_2},
$$
respectively. 
It then follows that
\begin{align*}
G
&\leqslant \mathop{\sum\sum}_{\substack{d_1e_1f_1\ll A\\ d_2e_2f_2\ll B\\ \lcm[f_1,f_2]\le 2(KW+NU)}}d_1d_2e_1e_2f_1f_2\mathop{\sum_{k\leqslant K} \sum_{n\leqslant N}\sum_{u\sim U}\sum_{w\sim W}}_{\lcm[f_1,f_2]\mid (kw-nu)\neq0}
\mathop{\mathop{\sum\sum\sum\sum}_{d_1\ell_1,d_2\ell_2\leqslant L,~e_1m_1,e_2m_2\leqslant M}}_{\substack{e_1\mid ad_1\ell_1k,~bm_1n\equiv-ad_1\ell_1k/e_1\bmod{d_1f_1}\\d_2\mid be_2m_2w,~a\ell_2u\equiv-be_2m_2w /d_2\bmod{e_2f_2}\\
|ad_1\ell_1k +be_1m_1 n |\sim A,~|ad_2 \ell_2 u+be_2m_2w|\sim B}}1\\
&\leqslant \mathop{\sum\sum}_{\substack{d_1f_1\ll A\\ e_2f_2\ll B\\ \lcm[f_1,f_2]\le  2(KW+NU)}} d_1e_2f_1f_2\sup_{\delta\in\Z}\mathop{\sum_{k\leqslant K} \sum_{w\sim W}}_{kw\equiv \delta\bmod{\lcm[f_1,f_2]}}\mathop{\sum\sum}_{\substack{d_2\ll B/(e_2f_2)\\ e_1\ll A/(d_1f_1)}}d_2e_1\\
&\qquad\qquad\qquad\qquad\qquad\qquad\quad\times
\sum_{n\leqslant N}\sum_{u\sim U}\mathop{\mathop{\sum\sum\sum\sum}_{d_1\ell_1,d_2\ell_2\leqslant L,~e_1m_1,e_2m_2\leqslant M}}
_{\substack{e_1\mid ad_1\ell_1k,~bm_1n\equiv-ad_1\ell_1k/e_1\bmod{d_1f_1}\\d_2\mid be_2m_2w,~a\ell_2u\equiv-be_2m_2w /d_2\bmod{e_2f_2}\\
|ad_1\ell_1k/e_1 +bm_1 n |\sim A/e_1,~|a\ell_2 u+be_2m_2w/d_2|\sim B/d_2}}1.
\end{align*}

To proceed, we group the variables $m_1,n$, and $\ell_2,u$, separately, so that we need to count the number of tuples $(r,s)$ with
$$
|ad_1\ell_1k/e_1+br|\sim A/e_1,~|as+be_2m_2w/d_2|\sim B/d_2
$$
and
$$
br\equiv-d_1\ell_1k/e_1\bmod{d_1f_1},\qquad as\equiv-e_2m_2w /d_2\bmod{e_2f_2}.
$$
 It is clear that the number $T$ of such tuples $(r,s)$ satisfies
$$
T \ll \(1+\frac{A\gcd(b,d_1f_1)}{bd_1e_1f_1}\)\(1+\frac{B\gcd(a,e_2f_2)}{ad_2e_2f_2}\)
\ll \frac{AB}{d_1d_2e_1e_2f_1f_2}.
$$
This leads us to
$$
G
\ll AB\mathop{\sum\sum}_{\substack{d_1f_1\ll A\\ e_2f_2\ll B\\ \lcm[f_1,f_2]\le 2(KW+NU)}} \sup_{\delta\in\Z}\mathop{\sum_{k\leqslant K} \sum_{w\sim W}}_{kw\equiv \delta\bmod{\lcm[f_1,f_2]}}\mathop{\sum\sum}_{\substack{d_2\ll B/(e_2f_2)\\ e_1\ll A/(d_1f_1)}}\mathop{\sum\sum}_{\substack{\ell_1\leqslant L/d_1,~m_2\leqslant M/e_2\\d_2\mid be_2m_2w,~e_1\mid ad_1\ell_1k}}1.
$$
In what follows, we would like to sum over $d_2,e_1$ firstly, and then $\ell_1,m_2$, so that
$$
G
\le  ABLM\Upsilon^{o(1)}\mathop{\sum\sum}_{\substack{d_1f_1\ll A\\ e_2f_2\ll B\\ \lcm[f_1,f_2]\le 2(KW+NU)}} \frac{1}{d_1e_2}\sup_{\delta\in\Z}\mathop{\sum_{k\leqslant K} \sum_{w\sim W}}_{kw\equiv \delta\bmod{\lcm[f_1,f_2]}}1.
$$

Again by grouping variables, we arrive at
\begin{align*}
G
&\leqslant ABLM\Upsilon^{o(1)}\mathop{\sum\sum}_{\substack{d_1f_1\ll A\\ e_2f_2\ll B\\
 \lcm[f_1,f_2]\le 2(KW+NU)}}\frac{1}{d_1e_2}\(\frac{KW}{\lcm[f_1,f_2]}+1\)\\
&\leqslant ABLM\Upsilon^{o(1)}
 \sum_{d_1 \ll A} \frac{1}{d_1} \sum_{e_1 \ll A} \frac{1}{e_2}
\sum_{f \ll KW+NU}\(\frac{KW}{f}+1\)\\
&\leqslant ABLM(KW+NU)\Upsilon^{o(1)},
\end{align*}
where we have used the bound on the divisor function to count the number of pairs $(f_1,f_2)$ with $\lcm[f_1,f_2] = f$.
This completes the proof of the lemma.
\end{proof}

\section{Proof of Theorem~\ref{thm:TypeI}}
\label{sec:Type-I proof}
\subsection{Preparations}
Throughout this section, we assume that $\balpha$ is identically 1 on its support.
Suppose without loss of generality that 
$\|\bbeta\|_\infty \le 1$ and $M$ is a positive integer. Following an approach of Friedlander and Iwaniec~\cite{FI93},
we define the function $w\in L^1(\R)$ as
$$
w(x)=
\begin{cases}
\min\{x,~1,~M+1-x\},& \text{for~}x\in[0,M+1],\\
0,&\text{otherwise}.
\end{cases}
$$
The integration by parts implies
\begin{equation}\label{eq:w-fourier}
\widehat{w}(\xi)\ll \min\{M,|\xi|^{-1},|\xi|^{-2}\}.
\end{equation}

\subsection{Amplification}
The above function $w$ allows us to run the summation over $m$ to the whole set $\Z$.
That is, 
$$
\fT(\balpha,\bbeta)=\sum_{k\leqslant K} \sum_{m\in\Z}\sum_{n\leqslant N}w(m)\beta_{k,n}\chi(ak+bmn).
$$
Furthermore, for all integers $u,v$ we have
$$
\fT(\balpha,\bbeta)=\sum_{k\leqslant K} \sum_{m\in\Z}\sum_{n\leqslant N} w(m+uv)\beta_{k,n}\chi(ak+b(m+uv)n).
$$
We choose two real positive parameters $U$ and $V$ with 
\begin{equation}\label{eq:UV=M/4}
U,V\geqslant1,\ \  UV=\frac{1}{4}M.
\end{equation}
By Fourier inversion and summing over integers $u$ and $v$ with $u\sim U$, $v\sim V$  
we have the amplified expression
$$
\fT(\balpha,\bbeta)\ll\frac{1}{UV} \sum_{k\leqslant K} \sum_{m\leqslant M}  \sum_{n\leqslant N}\sum_{u\sim U}\int_\R |\widehat{w}(\xi)|\left| \sum_{v\sim V}\ue(uv\xi)\chi(\overline{un}(ak+bmn)+bv)\right|\ud\xi.
$$
Making the change of variable $\xi\rightarrow \xi/u$ and replacing $UV$ with $M$, 
in view of~\eqref{eq:w-fourier}, it follows that
\begin{equation}
\begin{split} 
\label{eq:T1-Transformed}
\fT(\balpha,\bbeta)
& \ll\frac{1}{M}\sum_{k\leqslant K}  \sum_{m\leqslant M}  \sum_{n\leqslant N}\sum_{u\sim U}\int_\R \left| \widehat{w}\(\frac{\xi}{u}\)\right|\left| \sum_{v\sim V}\ue(v\xi)\chi(\overline{un}(ak+bmn)+bv)\right|\frac{\ud\xi}{u}\\
&\ll\frac{1}{M}\int_\R 
\min\left\{\frac{M}{U},\frac{1}{|\xi|},\frac{U}{|\xi|^2}\right\} \\
& \qquad \qquad \qquad \times \sum_{k\leqslant K}\sum_{m\leqslant M}\sum_{n\leqslant N}\sum_{u\sim U}\left| \sum_{v\sim V}\ue(v\xi)\chi(\overline{un}(ak+bmn)+bv)\right|\ud\xi.
\end{split} 
\end{equation}
This yields
$$
\fT(\balpha,\bbeta)\ll\frac{\log M}{M} \sum_{k\leqslant K} \sum_{m\leqslant M}  \sum_{n\leqslant N} \sum_{u\sim U}\left|\sum_{v\sim V}\ue(v\xi)\chi(\overline{un}(ak+bmn)+bv)\right|
$$
for some $\xi\in\R$ (for which the multiple sum in~\eqref{eq:T1-Transformed}  
achieves its largest possible value). 

We may further restrict our summation to triples $(k,m,n)$ with $p\nmid(ak+bmn)$ up to an 
error term at most
$$
 p^{o(1)}\mathop{\sum_{k\leqslant K}\sum_{\ell  \leqslant  MN}}_{ak+b\ell\equiv0\bmod p}1
\leqslant \(\frac{MN}{p}+1\)Kp^{o(1)}.
$$
Therefore,
\begin{align*}
\fT(\balpha,\bbeta)\ll\frac{\log M}{M}\mathop{\sum_{k\leqslant K} \sum_{m\leqslant M}  \sum_{n\leqslant N}}_{p\nmid(ak+bmn)}\sum_{u\sim U}\left|\sum_{v\sim V}\ue(v\xi)\chi(\overline{un}(ak+bmn)+bv)\right| &\\ 
+\(\frac{MN}{p}+1\)&Kp^{o(1)}.
\end{align*}

\subsection{Regrouping, counting and Weil's  bound}
Put
$$
\varrho(x)=\mathop{\sum_{m\leqslant M}\sum_{n\leqslant N}\sum_{k\leqslant K}\sum_{u\sim U}}_{ak+bmn\equiv unx\not\equiv0\bmod p}1
$$
for $x\bmod p$. We now have
$$
\fT(\balpha,\bbeta)\ll\frac{\log M}{M}\sum_{x\bmod p}\varrho(x)\left| \sum_{v\sim V}\ue(v\xi)\chi(x+bv)\right|+\(\frac{MN}{p}+1\)Kp^{o(1)}.
$$
Applying the H\"older inequality with $r\ge 1$, we obtain 
\begin{equation}
\label{eq:T-I1I2S}
\fT(\balpha,\bbeta)\ll\frac{\log M}{M}\cI_1^{1-\frac{1}{r}}(\cI_2\cdot \cI_3)^{\frac{1}{2r}}+\(\frac{MN}{p}+1\)Kp^{o(1)}
\end{equation}
with
$$
\cI_j=\sum_{x\bmod p}\varrho(x)^j,\qquad  j=1,2,
$$
and
$$
\cI_3=\sum_{x\bmod p}\left| \sum_{v\sim V}\ue(v\xi)\chi(x+bv)\right|^{2r}.
$$

Trivially we have 
\begin{equation}\label{eq:I1-upperbound}
\cI_1\ll KMNU.
\end{equation}
Regarding $\cI_2$, we refer to the following lemma.

\begin{lemma}\label{lm:I2-upperbound}
With the above notation, we have
\begin{equation}\label{eq:I2-upperbound}
\cI_2
\leqslant  KMU(K+N)\(1+ \frac{\(|a|K + |b|MN\)UN}{p}\)p^{o(1)}.
\end{equation}  
\end{lemma}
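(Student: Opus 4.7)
My plan is to reduce the estimation of $\cI_2 = \sum_{x \bmod p} \varrho(x)^2$ to a counting problem for $8$-tuples and then bound it by a lifting-and-counting argument in the spirit of Lemma~\ref{lm:gcdsum}. By expanding the square and summing over $x \bmod p$ first, and using that the value of $x$ is uniquely determined by either $4$-tuple (since the coefficients $u_i n_i$ are invertible modulo $p$), I obtain
\[
\cI_2 = \#\bigl\{(k_1, m_1, n_1, u_1, k_2, m_2, n_2, u_2) \,:\, u_2 n_2 (a k_1 + b m_1 n_1) \equiv u_1 n_1 (a k_2 + b m_2 n_2) \not\equiv 0 \pmod p\bigr\},
\]
with the variables in their natural ranges.

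The integer $\Delta := u_2 n_2 (a k_1 + b m_1 n_1) - u_1 n_1 (a k_2 + b m_2 n_2)$ satisfies $|\Delta| \leqslant 2 UN(|a|K + |b|MN)$, so the congruence $\Delta \equiv 0 \pmod p$ is equivalent to $\Delta = \lambda p$ for some integer $\lambda$ with
\[
|\lambda| \ll 1 + \frac{(|a|K + |b|MN) UN}{p}.
\]
This produces the factor $(1 + (|a|K+|b|MN)UN/p)$ in~\eqref{eq:I2-upperbound}, reducing the task to proving, uniformly in $\lambda$, that the number $\cN(\lambda)$ of $8$-tuples satisfying $\Delta = \lambda p$ is $\ll KMU(K+N) p^{o(1)}$.

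For each fixed $\lambda$, I rewrite the equation as
\[
a (u_2 n_2 k_1 - u_1 n_1 k_2) + b n_1 n_2 (u_2 m_1 - u_1 m_2) = \lambda p
\]
and apply a GCD decomposition parallel to the one used in the proof of Lemma~\ref{lm:gcdsum}: the gcd of the two summands on the left is factored into parts dividing the coefficients $a u_i n_i$ and $b u_i n_1 n_2$, and parts dividing the natural difference combinations, with each factor controlled via the divisor function. The resulting count is split into a diagonal regime, which recovers the trivial contribution $\ll KMNU$, and a generic regime, in which $(k_1, k_2)$ are essentially free over a $K^2$ range once the equation is resolved for one of the $m$-variables against the fixed $(n_1, n_2, u_1, u_2)$ block, contributing $\ll K^2 MU \cdot p^{o(1)}$. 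Summing the two regimes delivers the claimed bound $\cN(\lambda) \ll KMU(K+N) p^{o(1)}$.

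The main technical obstacle is extracting the asymmetric factor $K+N$ while keeping the $p^{o(1)}$ losses absorbed into divisor sums. The $K$ term emerges from the generic counting where $(k_1, k_2)$ are free after the equation is solved for one variable, yielding $K^2 MU$ after summing over $(n_1, n_2, u_1, u_2)$ constrained by the divisibility relations. The $N$ term reflects the diagonal recovery of the trivial bound $\cI_1 \ll KMNU$. Carefully tracking the divisibility constraints imposed by the coefficients $a u_i n_i$ and $b u_i n_1 n_2$ --- in direct analogy with the triple decomposition $g_\nu = d_\nu e_\nu f_\nu$ used in the proof of Lemma~\ref{lm:gcdsum} --- is the key technical step.
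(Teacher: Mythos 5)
Your reduction of $\cI_2$ to counting 8-tuples satisfying $u_2 n_2(ak_1+bm_1n_1) \equiv u_1 n_1(ak_2+bm_2n_2) \not\equiv 0 \pmod p$, and your lift of the congruence to an equation $\Delta = \lambda p$ with $|\lambda| \ll 1 + (|a|K+|b|MN)UN/p$, are both correct and match the paper's opening moves. The gap is in the per-$\lambda$ count. You invoke a ``GCD decomposition parallel to Lemma~\ref{lm:gcdsum}'' and a diagonal/generic split into contributions $\ll KMNU$ and $\ll K^2MU\,p^{o(1)}$, but neither piece is justified, and as sketched they do not hold up. The $KMNU$ ``diagonal'' count only makes sense for $\lambda=0$ (the exact diagonal $(k_1,m_1,n_1,u_1)=(k_2,m_2,n_2,u_2)$); for $\lambda\ne 0$ you have not said what the diagonal regime is or why it is bounded by $KMNU$. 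More seriously, the ``generic'' count would require the four variables $(n_1,n_2,u_1,u_2)$, together with any divisibility constraints, to contribute only $O(U)$ in total --- but $n_1$ and $u_1$, say, run freely over $\asymp NU$ values and the single linear relation you wrote down does not pin them down. So the claimed $\cN(\lambda)\ll K^2MU\,p^{o(1)}$ for the generic part is not supported.

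The paper's actual argument does not involve a regime split and is a sequential fix-and-count via divisor bounds. From $u_2 n_2(ak_1+bm_1n_1) = u_1 n_1(ak_2+bm_2 n_2) + \lambda p$ one first fixes $(k_2,n_1,u_1,\lambda)$ in $O(KNUT)$ ways; then $n_2 \mid a k_2 n_1 u_1 + \lambda p$, and the right-hand side is nonzero, so $n_2$ has $p^{o(1)}$ choices; then one fixes $m_2$ in $M$ ways; then $u_2 \mid (ak_2+bm_2n_2)n_1 u_1 + \lambda p \ne 0$, so $u_2$ has $p^{o(1)}$ choices; finally $ak_1+bm_1n_1$ equals a fixed integer, and since $k_1$ lies in a prescribed residue class modulo $n_1$, the pair $(k_1,m_1)$ has $O(1+K/N)$ choices up to $p^{o(1)}$. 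Multiplying gives $KNUT\cdot M\cdot(1+K/N)\,p^{o(1)} = KMTU(K+N)p^{o(1)}$: the factor $K+N$ arises as $N\cdot(1+K/N)$, not as a sum of two counting regimes. Without something like this chain of divisor-and-progression bounds, the central step of your proof remains a genuine gap.
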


\begin{proof} 
It is easy to see that  $\cI_2$ is  the number of 8--tuples 
$$
(k,\tk, m,\tm ,n,\tn ,u,\tu ) \in \N^8
$$ 
satisfying the conditions
\begin{align*}
 k,\tk\leqslant K, \quad m,&\, \tm \leqslant M, \quad n,\tn \leqslant N,\quad u,\tu \sim U,\\
(ak + bmn)\tn \tu &\equiv (a \tk + b \tm \tn)n u \not \equiv 0\pmod p, 
\end{align*}
which  is bounded by the number of 9--tuples 
$$(k,\tk, m,\tm ,n,\tn ,u,\tu ,t) \in \N^8 \times \Z, 
$$ 
satisfying
\begin{equation}
\label{eq:Diophantineequation}
\begin{split} 
 k,\tk\leqslant K, \quad m,\tm \leqslant M, &\quad n,\tn \leqslant N,\quad u,\tu \sim U,\quad  |t|\leqslant T,\\
(ak + bmn)\tn \tu &=(a \tk + b \tm \tn)nu+tp,\\
(ak + bmn)&(a \tk + b \tm \tn)\neq 0,
\end{split} 
\end{equation}
with 
\begin{equation}\label{eq:T-choice}
T=1+(|a|K + |b| MN )NU/p.
\end{equation}
Note that $\tn \mid a\tk un+tp$ and $a\tk un \not \equiv 0 \pmod p$. 
Hence $a\tk un+tp \ne 0$ for any $t \in \Z$ and 
thus, for given $\tk, n, t, u$, which we can fix in $O\(KNTU\)$ ways 
 the number of $\tn $ satisfying~\eqref{eq:Diophantineequation} is at most $p^{o(1)}$. After $\tk,\tm ,n,\tn , t,u$ are fixed, there are at most  $(K/N+1)p^{o(1)}$ possibilities for the tuple $(m,\tu ,k)$. This is due to the observations 
 that $\tu \mid (a \tk + b \tm \tn)un+t p\neq0$ and the number of solutions $(m,k)$ to
$$
ak + bmn=\frac{(a \tk + b \tm \tn)un+tp}{\tu \tn }
$$
with $m\leqslant M,k\leqslant K$ is at most $O\(1+K/N\)$ since $k$ falls in 
a prescribed  arithmetic progression modulo $n \sim N$, which is already fixed. 

Taking all possibilities into account, we find the contributions from $ak + bmn\neq 0$ to $\cI_2$ are at most
$$
KNTU\cdot M \cdot \(1+\frac{K}{N}\) p^{o(1)} \leqslant  KMTU(K+N)p^{o(1)}.
$$
Now Lemma~\ref{lm:I2-upperbound} follows by recalling the choice of $T$ in~\eqref{eq:T-choice}.
\end{proof}

To bound $\cI_3$, we apply Lemma~\ref{lm:DavenportErdos} with $\cA=\{bv:v\sim V\}$, getting
\begin{equation}\label{eq:S-uperbound}
\cI_3\ll V^{2r}p^{\frac{1}{2}}+V^rp
\end{equation}
as a consequence of Weil's bound for complete character sums over finite fields
(see~\cite[Theorem~11.23]{IK04}).

\subsection{Concluding the proof of Theorem~\ref{thm:TypeI}}
To balance the two terms in~\eqref{eq:S-uperbound}, we choose
$$
V=p^{\frac{1}{2r}} \mand U = \frac{1}{4} M p^{-\frac{1}{2r}}
$$
in view of~\eqref{eq:UV=M/4}, so that~\eqref{eq:I1-upperbound} and\eqref{eq:I2-upperbound} 
become
$$
\cI_1\ll KM^2Np^{-\frac{1}{2r}} \mand \cI_2
\leqslant  KM^2p^{-\frac{1}{2r}}(K+N)(1+\mathscr{L}_1MNp^{-1-\frac{1}{2r}})p^{o(1)}
$$
respectively, 
where $\mathscr{L}_1$ is defined by~\eqref{eq:L-notation}, while~\eqref{eq:S-uperbound} becomes 
$$
\cI_3\ll p^{\frac{3}{2}}.
$$

Now Theorem~\ref{thm:TypeI} follows by combining the inequality~\eqref{eq:T-I1I2S} and the 
above estimates for $\cI_1$, $\cI_2$ and $\cI_3$.

\section{Proof of Theorem~\ref{thm:TypeII}} 

\subsection{Cauchy--Schwarz inequality and amplification}
We note that the auxiliary parameters $T, U,V $ from \S~\ref{sec:Type-I proof} have different meaning henceforth.
This time it is convenient to assume that $\|\balpha\|_\infty\leqslant1$. 
By the Cauchy--Schwarz inequality, we have
\begin{equation}\label{eq:bilinearform-T}
|\fT(\balpha,\bbeta)|^2\leqslant \|\boldsymbol\beta\|_2^2T,
\end{equation}
where
$$
T=\sum_{k\leqslant K}\sum_{n\in\Z} W\(\frac{n}{N}\)
\left| \sum_{m\leqslant M}\alpha_m\chi(ak+bmn)\right|^2 
$$
with any fixed real-valued smooth function $W \in \cC_c^\infty([-2,2])$, which majorizes
the characteristic function of the unit interval $[0,1]$.

Squaring out, changing the order of summation and estimating the contribution 
from  the diagonal terms with $m_1 = m_2$,  we obtain
\begin{equation}\label{eq:T-T1}
T\ll T_1+KMN
\end{equation}
with
$$
T_1 =\sum_{k\leqslant K}\mathop{\sum\sum}_{\substack{m_1,m_2\leqslant M\\ m_1\neq m_2}}\sum_{n\in\Z}W\(\frac{n}{N}\)\alpha_{m_1}\overline{\alpha}_{m_2}\chi(ak+bm_1n)\overline{\chi}(ak+bm_2n).
$$
For any integers $u,v$ we may write
$$
T_1=\sum_{k\leqslant K} \mathop{\sum\sum}_{\substack{m_1,m_2\leqslant M\\ m_1\neq m_2}}\sum_{n\in\Z}W\(\frac{n+uv}{N}\)\alpha_{m_1} \overline{\alpha}_{m_2}\chi(ak+bm_1(n+uv))\overline{\chi}(ak+bm_2(n+uv)).
$$
By Fourier inversion and summing over $u\sim U,v\sim V$ for some positive parameters $U$ and $V$ with 
\begin{equation}\label{eq:UV=N/4}
U,V\geqslant1,\ \ \ \  UV=\frac{1}{4}N,
\end{equation}
it follows that
\begin{align*}
T_1&\ll\frac{1}{N}\sum_{k\leqslant K}\mathop{\sum\sum}_{\substack{m_1,m_2\leqslant M\\ m_1\neq m_2}}\sum_{|n|\leqslant 2N} \sum_{u\sim U}\int_\R |\widehat{W}(\xi)|\\
& \qquad  \qquad \times\biggl| \sum_{v\sim V}\ue\(\frac{uv\xi}{N}\)\chi(\overline{m_1u}(ak+bm_1n)+bv)\overline{\chi}(\overline{m_2u}(ak+bm_2n)+bv)\biggr|\ud\xi.
\end{align*}
In view of 
$$
\widehat{W}(\xi)\ll (1+|\xi|)^{-2}
$$
implied by the smoothness of $W(x)$ via partial integration, 
 and making the change of variable $\xi\rightarrow \xi/u$, after simple transformations, we obtain
\begin{align*}
T_1&\ll\frac{1}{NU}\int_\R \(1+\frac{|\xi|}{U}\)^{-2}\sum_{k\leqslant K}\mathop{\sum\sum}_{\substack{m_1,m_2\leqslant M\\ m_1\neq m_2}} \sum_{|n|\leqslant 2N}\sum_{u\sim U}\\
&\qquad  \qquad \times \biggl| \sum_{v\sim V}\ue\(\frac{v\xi}{N}\)\chi(\overline{m_1u}(ak+bm_1n)+bv)\overline{\chi}(\overline{m_2u}(ak+bm_2n)+bv)\biggr|\ud\xi.
\end{align*}
Again, as in the proof of Theorem~\ref{thm:TypeI},  this implies 
\begin{equation}\label{eq:T1-amplified}
\begin{split}
T_1&\ll\frac{1}{N}\sum_{k\leqslant K}\mathop{\sum\sum}_{\substack{m_1,m_2\leqslant M\\ m_1\neq m_2}}\sum_{|n|\leqslant 2N} \sum_{u\sim U}\\
&\qquad \quad \times\left| \sum_{v\sim V}\ue(v\xi)\chi(\overline{m_1u}(ak+bm_1n)+bv)\overline{\chi}(\overline{m_2u}(ak+bm_2n)+bv)\right|
\end{split}
\end{equation}
for some $\xi\in\R$. 

\subsection{Regrouping, counting and Weil's bound}
Put
$$
\varrho(x_1,x_2)=\mathop{\sum_{|n|\leqslant 2N}\sum_{k\leqslant K}\mathop{\sum\sum}_{m_1,m_2\leqslant M}\sum_{u\sim U}}_{\substack{ak+bm_1n\equiv um_1x_1 \not\equiv0\bmod p\\ ak+bm_1n\equiv um_2x_2\not\equiv0\bmod p\\ m_1\neq m_2}}1
$$
for $x_1,x_2 \bmod p$. We now have
$$
T_1\ll\frac{1}{N} \mathop{\sum\sum}_{x_1,x_2\bmod p}
 \varrho(x_1,x_2)\left| \sum_{v\sim V}\ue(v\xi)\chi(x_1+bv)\overline{\chi}(x_2+bv)\right|+(1+MN/p)KMp^{o(1)},
$$
where the last term comes from those $k,m_1,m_2,n$ with 
$$
p\mid(ak+bm_1n)(ak+bm_2n)
$$ 
in~\eqref{eq:T1-amplified}.

Applying the H\"older inequality with $r\ge 1$ similarly to~\eqref{eq:T-I1I2S}, we find
\begin{equation}\label{eq:T1-J1J2Simga}
T_1\ll \frac{1}{N}\cJ_1^{1-\frac{1}{r}}(\cJ_2\cdot \cJ_3)^{\frac{1}{2r}}+(1+MN/p)KMp^{o(1)}
\end{equation}
with
$$
\cJ_j=\mathop{\sum\sum}_{x_1,x_2\bmod p}
 \varrho(x_1,x_2)^j,\quad j=1,2,
$$
and
$$
\cJ_3=\mathop{\sum\sum}_{x_1,x_2\bmod p}\left| \sum_{v\sim V}\ue(v\xi)\chi(x_1+bv)\overline{\chi}(x_2+ bv)\right|^{2r}.
$$

Note that
\begin{equation}\label{eq:J1-upperbound}
\cJ_1\ll KM^2NU.
\end{equation}
We appeal to the following lemma in bounding $\cJ_2$.

\begin{lemma}\label{lm:J2-upperbound}
With the above notation, we have
\begin{equation}\label{eq:J2-upperbound}
\cJ_2
\leqslant  KNU(K+M^2)\(1+ \frac{\(|a|K + |b|MN\)MU}{p}\)^2p^{o(1)}.
\end{equation}  
\end{lemma}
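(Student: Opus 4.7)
The plan is to mirror the strategy of Lemma~\ref{lm:I2-upperbound}, adapted to the two-congruence setting. Opening $\varrho(x_1,x_2)^2$ and summing over $(x_1,x_2)\in(\F_p)^2$, using the defining congruences of $\varrho$ to eliminate $x_1$ and $x_2$, I would bound $\cJ_2$ by the number of $10$-tuples $(k,\tk,m_1,m_2,\tm_1,\tm_2,n,\tn,u,\tu)$ in the prescribed ranges with $m_1\ne m_2$, $\tm_1\ne\tm_2$ satisfying the two congruences
$$
(ak+bm_jn)\tu\tm_j\equiv(a\tk+b\tm_j\tn)um_j\pmod p\qquad(j=1,2),
$$
together with the nonvanishing conditions $ak+bm_jn\not\equiv 0 \pmod p$ inherited from the definition of $\varrho(x_1,x_2)$.

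The first step is to lift each congruence to an integer equation, introducing auxiliary integers $t_1,t_2$ of size $|t_j|\leqslant T$ with $T\ll 1+\mathscr{L}_1MU/p$; this is justified by bounding each side by $O(\mathscr{L}_1MU)$, and accounts for the factor $(1+\mathscr{L}_1MU/p)^2$ in the target. Next, I apply the divisibility-and-arithmetic-progression recipe from the proof of Lemma~\ref{lm:I2-upperbound} to the first equation (with the roles of $m$ and $n$ swapped relative to the $\cI_2$ setting). Concretely: fix $(\tk,m_1,t_1,u)$ in $O(KMTU)$ ways; exploit $\tm_1\mid um_1a\tk+t_1p$ together with the divisor bound to get $p^{o(1)}$ choices for $\tm_1$; fix $\tn$ in $O(N)$ ways; use $\tu\mid um_1(a\tk+b\tm_1\tn)+t_1p$ analogously to reduce $\tu$ to $p^{o(1)}$ choices; and finally count $(n,k)$ via $k$ running through an arithmetic progression modulo $m_1\sim M$, giving $O(1+K/M)$ pairs. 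This bounds the number of $9$-tuples (including $t_1$) satisfying the first equation by $KNTU(K+M)p^{o(1)}$.

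For each such $9$-tuple the second equation, viewed as a congruence in $(m_2,\tm_2)$, reads
$$
\tm_2(\tu ak+bm_2D)\equiv um_2a\tk\pmod p,\qquad D=\tu n-u\tn,
$$
which defines a graph-like locus in $(m_2,\tm_2)$ over $\F_p$; it admits at most $M$ pairs in $[1,M]^2$, and the corresponding $t_2$ is then uniquely determined. A case split according to whether $m_1\tm_2\equiv m_2\tm_1\pmod p$ then delivers the target: the generic case ($m_1\tm_2\not\equiv m_2\tm_1$) yields the $K$-part of the factor $(K+M^2)$ through the arithmetic-progression count, while the degenerate case---where only $O(M^2p^{o(1)})$ quadruples $(m_1,m_2,\tm_1,\tm_2)$ are available and the two equations become linearly dependent over $\F_p$---contributes the $M^2$-part. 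I expect the main obstacle to be a careful treatment of this degenerate regime, where the divisibility recipe above does not directly apply and one must instead exploit the parametric form $\tm_j=\lambda m_j$ of the degenerate tuples (together with the resulting single reduced equation in the remaining variables) to recover the claimed savings without double-counting.
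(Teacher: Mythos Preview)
Your setup is fine and matches the paper: lift both congruences to integer equations with $|t_1|,|t_2|\le T\asymp 1+\mathscr L_1 MU/p$, and use divisor bounds to pin down $\tm_1,\tu$ (and later $\tm_2$) from $\tm_1\mid a\tk m_1u+t_1p$, etc. The gap is in how you then spend the two equations. Your sequential scheme --- count $9$-tuples from the first equation, then for each of these use the second to bound the number of pairs $(m_2,\tm_2)$ by $M$ --- produces at best
\[
KNTU(K+M)\cdot M\,p^{o(1)}=KNTU(KM+M^2)\,p^{o(1)},
\]
which overshoots the target $KNUT^2(K+M^2)p^{o(1)}$ by a factor $\asymp M/T$ whenever $K\gtrsim M^2$ and $T\ll M$ (precisely the regime that matters after one sets $U=\tfrac14Np^{-1/r}$). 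The case split on $m_1\tm_2\equiv m_2\tm_1\pmod p$ does not rescue this: viewed as a linear system in $(k,n)$, the two lifted equations have coefficient matrix with determinant $ab\,\tu^2\tm_1\tm_2(m_2-m_1)$, which is nonzero for every admissible tuple since $m_1\neq m_2$; so the system is \emph{never} dependent in $(k,n)$, and your ``degenerate'' regime is not the source of the $M^2$-term.

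What the paper does instead is to use both equations \emph{simultaneously} on $(k,n)$ rather than sequentially. After fixing $\tk,\tn,u,m_1,m_2,t_1,t_2$ (cost $KNU\cdot M^2\cdot T^2$) and determining $\tm_1,\tm_2,\tu$ up to $p^{o(1)}$ via divisor bounds, each lifted equation reads $ak+bm_jn=c_j$ and hence puts $ak$ in a fixed residue class modulo $m_j$; thus $k$ lies in an arithmetic progression modulo $\mathrm{lcm}(m_1,m_2)$, giving at most $K\gcd(m_1,m_2)/(m_1m_2)+1$ choices, with $n$ then determined. Summing over $m_1,m_2\le M$,
\[
\sum_{m_1,m_2\le M}\Bigl(\frac{\gcd(m_1,m_2)}{m_1m_2}K+1\Bigr)\le (K+M^2)\,p^{o(1)},
\]
which is exactly where the factor $(K+M^2)$ comes from --- no dichotomy on $(m_1,m_2,\tm_1,\tm_2)$ is needed. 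In short, drop the sequential count and the $m_1\tm_2\equiv m_2\tm_1$ split, and instead let the \emph{pair} of equations constrain $(k,n)$ after all other variables (and both $t_j$) are fixed.
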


\begin{proof}
We observe that $\cJ_2$ is bounded by the number of 10-tuples 
$$
(k,\tk ,m_1,\tm_1,m_2,\tn_2,n,\tn,u,\tu ) \in \N^{10}
$$ 
satisfying
\begin{align*}
k,\tk\leqslant K, \quad m_1,\tm_1,m_2,\tm_2&\leqslant M,\quad n,\tn \in[-2N,2N],\quad u,\tu \sim U,\\ 
(ak+bm_1n)\tm_1\tu &\equiv (a\tk+b\tm_1\tn)m_1u\not\equiv0\bmod p,\\
(ak+bm_2n)\tm_2\tu &\equiv (a\tk+b\tm_2\tn)m_2u\not\equiv0\bmod p,\\
m_1&\neq m_2,  \quad \tm_1\neq \tm_2.
\end{align*}
We now rewrite the above congruences as equations 
\begin{subequations}\begin{align}
(ak+bm_1n)\tm_1\tu &=(a\tk+b\tm_1\tn)m_1u+t_1p\ne 0, \label{eq:secondmoment-eq1}\\
(ak+bm_2n)\tm_2\tu &=(a\tk+b\tm_2\tn)m_2u+t_2p\ne 0,\label{eq:secondmoment-eq2}
\end{align}
\end{subequations}
where $0\leqslant |t_1|,|t_2|\leqslant T$ with 
\begin{equation}\label{eq:T-choice2}
T=1+(|a|K+2|b|MN)MU/p.
\end{equation}

It follows from~\eqref{eq:secondmoment-eq1} that $\tm_1\mid a\tk m_1u+t_1p\neq0$ and $\tu\mid  (\tm_1\tn +a\tk)m_1u+t_1p\neq0$, which produces at most $p^{o(1)}$ tuples of $(\tm_1,\tu)$ for given $\tn ,\tk,m_1,u,t_1$. 
After fixing $\tm_1,\tu,\tn ,\tk,m_1,u,t_1$, we obtain the equation
\begin{equation}\label{eq:equation-n,k1}
ak+bm_1n= \frac{(a\tk+b\tm_1\tn)m_1u+t_1p}{\tm_1\tu}
\end{equation}
in $k$ and $n$, thanks to~\eqref{eq:secondmoment-eq1}. 
We further fix $m_2$ and $t_2$. We see  from~\eqref{eq:secondmoment-eq2} that $\tm_2\mid a\tk m_2u+t_2p\neq0$, by which there are at most $p^{o(1)}$ values of $\tm_2$ for given $\tk,m_2,u,t_2$. As before, using~\eqref{eq:secondmoment-eq2}, 
we obtain the equation
\begin{equation}\label{eq:equation-n,k2}
ak+bm_2n= \frac{(a\tk+b\tm_2\tn)m_2u+t_2p}{\tm_2\tu}
\end{equation}
in $k$ and $n$. It then follows from~\eqref{eq:equation-n,k1} and~\eqref{eq:equation-n,k2} that
$k$ is uniquely defined modulo $m_1$ and modulo $m_2$, so that the number of such 
positive integers $k$ is at most
$$
 \frac{K}{\lcm[m_1,m_2]}+1=\frac{\gcd(m_1,m_2)}{m_1m_2}K+1.
$$
Now $n$ is uniquely determined after $k$ is fixed. Collecting all above arguments, we find
\begin{align*}
\cJ_2
&\le KNUT^2p^{o(1)}\mathop{\sum\sum}_{m_1,m_2\leqslant M}\(\frac{\gcd(m_1,m_2)}{m_1m_2}K+1\)\nonumber\\
&\le KNUT^2(K+M^2)p^{o(1)}.
\end{align*}
Lemma~\ref{lm:J2-upperbound} now follows immediately by recalling the choice of $T$ in~\eqref{eq:T-choice2}.
\end{proof}

Using Lemma~\ref{lm:DavenportErdos2} with the special choice $\cA=\{bv:v\sim V\}$ therein, we find
\begin{equation}\label{eq:Simga-upperbound}
\cJ_3\ll V^{2r}p+V^rp^2.
\end{equation}

\subsection{Concluding the proof of  Theorem~\ref{thm:TypeII}}
We now choose
$$
V = p^{\frac{1}{r}} \mand U =  \frac{1}{4}N  p^{-\frac{1}{r}}
$$
subject to the constraint in~\eqref{eq:UV=N/4},
so that the above bounds for $\cJ_1$ and $\cJ_2$ in~\eqref{eq:J1-upperbound} and~\eqref{eq:J2-upperbound}  become
$$
\cJ_1\ll K(MN)^2p^{-\frac{1}{r}}\mand
\cJ_2
\leqslant  KN^2p^{-\frac{1}{r}}(K+M^2)(1+\mathscr{L}_1MNp^{-1-\frac{1}{r}})^2p^{o(1)},
$$ 
respectively, while the bound for $\cJ_3$ in ~\eqref{eq:Simga-upperbound} 
becomes 
$$
\cJ_3\ll p^3.
$$
Substituting these estimates to~\eqref{eq:T1-J1J2Simga}, we arrive that
\begin{align*}
T_1
 \le\frac{1}{N}(KM^2N^2)^{1-\frac{1}{2r}}(1+KM^{-2})^{\frac{1}{2r}}&(1+\mathscr{L}_1MNp^{-1-\frac{1}{r}})^{\frac{1}{r}}\\
 &\qquad \times p^{\frac{1}{2r}+\frac{1}{2r^2}+o(1)}+(1+MN/p)KMp^{o(1)},
\end{align*}  
from which and then inequalities~\eqref{eq:bilinearform-T} and~\eqref{eq:T-T1}, Theorem~\ref{thm:TypeII} follows.

\section{Proof of Theorem~\ref{thm:quadrilinearform}} 

\subsection{Preliminary transformations}

Assume $\|\boldsymbol\alpha\|_\infty\leqslant1$ and $\gcd(a,b)=1$ without loss of generality.  By periodicity, we also assume that $1\leqslant |a|,|b|<p/2$.
By the Cauchy--Schwarz inequality, 
we have
\begin{equation}\label{eq:quadrilinear-Q}
|\fQ(\boldsymbol\alpha,\boldsymbol\beta)|^2\leqslant \|\boldsymbol\beta\|_2^2Q,
\end{equation}
where  
$$
Q=\mathop{\sum\sum}_{k,n\in\Z}	\varPhi\(\frac{k}{K}\)\varPhi\(\frac{n}{N}\)\left|\sum_{\ell\leqslant L}\sum_{m\leqslant M}\alpha_{\ell, m}\chi(ak \ell + bmn)\right|^2,
$$
for  any  smooth function   $\varPhi$, which dominates the characteristic function of $[-1,1]$
and is supported only inside the interval $[-2,2]$.
Squaring out and switching  the order of summation,   we get
\begin{equation}\label{eq:Q-Q1}
Q\leqslant |Q_1|+KLMN(LM/p+1)p^{o(1)}
\end{equation}
with
\begin{align*}
Q_1=\mathop{\sum\sum\sum\sum}_{\substack{\ell_1,\ell_2\leqslant L,~m_1,m_2\leqslant M\\ \ell_1m_2\not\equiv \ell_2m_1\bmod p}}\mathop{\sum\sum}_{k,n\in \Z} \varPhi\(\frac{k}{K}\)& \varPhi\(\frac{n}{N}\) \alpha_{\ell_1,m_1}\overline{\alpha}_{\ell_2,m_2}\\
&\quad  \times  \chi(ak\ell_1+bm_1n)\overline{\chi}(ak\ell_2+bm_2n),
\end{align*}
where the second term in~\eqref{eq:Q-Q1} comes from the contribution from $\ell_1m_2\equiv \ell_2m_1\bmod p$.
The trivial bound for $Q_1$ is
\begin{equation}\label{Q1trivial}
Q_1\ll KL^2M^2N,
\end{equation}
and, by~\eqref{eq:quadrilinear-Q}, we obtain a non-trivial bound of $\fQ(\boldsymbol\alpha,\boldsymbol\beta)$ as soon as we improve~\eqref{Q1trivial}. 
For all integers $u,v, w$, using that $\Z$ is invariant under shifts by integers,  we can write 
\begin{align*}
Q_1&=\mathop{\sum\sum\sum\sum}_{\substack{\ell_1,\ell_2\leqslant L,~m_1,m_2\leqslant M\\ \ell_1m_2\not\equiv \ell_2m_1\bmod p}}\mathop{\sum\sum}_{k,n\in \Z}\varPhi\(\frac{k+uv}{K}\)\varPhi\(\frac{n+wv}{N}\) \alpha_{\ell_1,m_1}\overline{\alpha}_{\ell_2,m_2}\\
&\qquad \qquad \quad\times\chi\left(a(k+uv)\ell_1+bm_1(n+wv)\right) \overline{\chi}\left(a(k+uv)\ell_2+bm_2(n+wv)\right).
\end{align*}
By Fourier inversion and summing over $u\sim U,v\sim V,w\sim W$ with 
\begin{equation}\label{eq:defUVW}
U,V,W\geqslant1,\ \  \ \ UV=\frac{1}{4}K \mand WV=\frac{1}{4}N,
\end{equation}
it follows that 
\begin{equation}
\begin{split}
\label{721}
Q_1&\ll\frac{1}{UVW}\sum_{|k|\leqslant2K}\sum_{|n|\leqslant2N}\mathop{\sum\sum\sum\sum}_{\substack{\ell_1,\ell_2\leqslant L,~ m_1,m_2\leqslant M\\ \ell_1m_2\not\equiv \ell_2m_1\bmod p}}\sum_{u\sim U}\sum_{w\sim W}\iint_{\R^2} |\widehat{\varPhi}(\xi)\widehat{\varPhi}(\eta)|\\
& \qquad  \qquad\quad \times\biggl|\sum_{v\sim V}\ue\(\frac{uv\xi}{K}+\frac{wv\eta}{N}\)\chi\left(\overline{(a\ell_1u+bm_1w)}(ak\ell_1+bm_1n)+v\right) \\
& \qquad \qquad \qquad \qquad  \quad    \times \overline{\chi}\left(\overline{(a\ell_2u+bm_2w)}(ak\ell_2+bm_2n)+v\right)\biggr|\ud\xi\ud\eta + \mathsf {Err}_1,
\end{split}
\end{equation}
where
\begin{itemize}
\item the variables of summation satisfy the extra condition 
\begin{equation}\label{eq:pdoesnotdivide}
p \nmid (a\ell_1u +bm_1w)(a\ell_2u+bm_2w)(ak\ell_1+bm_1n)(ak\ell_2+bm_2n),
\end{equation}
\item the term $\mathsf {Err}_1$  corresponds to the error induced by the terms 
$$p\mid (a\ell_1u +bm_1w)(a\ell_2u+bm_2w)(ak\ell_1+bm_1n)(ak\ell_2+bm_2n).$$
\end{itemize} 
As a typical possibility, the number of tuples $(\ell_1,u,m_1,w)$ satisfying $p\mid (a\ell_1u +bm_1w)$ is at most $LU(1+MW/p)p^{o(1)}$. Taking all the remaining possibilities into account, we find
\begin{equation}\label{boundforE}
\mathsf {Err}_1 \leqslant  \frac{KL^2MNUV}{UVW}\(1+\frac{MW}{p}\)p^{o(1)}
\leqslant  KL^2M\(V+\frac{MN}{p}\)p^{o(1)}.
\end{equation}

Combining~\eqref{boundforE} with
   the inequality $\widehat{\varPhi}(\xi)\ll (1+|\xi|)^{-2}$ and with  the change of variables $(\xi,\eta)\rightarrow (\xi/u,\eta/w)$ in~\eqref{721}, we obtain
\begin{align*}
Q_1&\ll\frac{1}{U^2VW^2}\iint_{\R^2} \(1+\frac{|\xi|}{U}\)^{-2}\(1+\frac{|\eta|}{W}\)^{-2}\sum_{|k|\leqslant 2K}\sum_{|n|\leqslant 2N}\mathop{\sum\sum\sum\sum}_{\substack{\ell_1,\ell_2\leqslant L,~ m_1,m_2\leqslant M\\ \ell_1m_2\not\equiv \ell_2m_1\bmod p}}\\
& \qquad \qquad \qquad   \times  \sum_{u\sim U}\sum_{w\sim W} \biggl|\sum_{v\sim V}\ue\(\frac{v\xi}{K}+\frac{v\eta}{N}\)\chi\(\overline{(a\ell_1u+bm_1w)}(ak\ell_1+bm_1n)+v\)\\
& \qquad \qquad \qquad  \qquad \qquad \quad   \qquad\times \overline{\chi}
\(\overline{(a\ell_2u+bm_2w)}(ak\ell_2+bm_2n)+v\)\biggr|\ud\xi\ud\eta 
\,   + \mathsf {Err}_1.
\end{align*}
This implies
\begin{align*}
Q_1\ll\frac{1}{UVW}\sum_{|k|\leqslant2 K}&\sum_{|n|\leqslant 2N}\mathop{\sum\sum\sum\sum}_{\substack{\ell_1,\ell_2\leqslant L,~ m_1,m_2\leqslant M\\ \ell_1m_2\not\equiv \ell_2m_1\bmod p}}\\
&\quad \times\sum_{u\sim U}\sum_{w\sim W}\Bigl|\sum_{v\sim V}\ue(v\xi)\chi\(\overline{(a\ell_1u+bm_1w)}(ak\ell_1+bm_1n)+v\)\\
& \qquad \qquad \quad \quad \times\overline{\chi}\(\overline
{(a\ell_2u+bm_2w)}(ak\ell_2+bm_2n)+v\)\Bigr|  + \mathsf {Err}_1
\end{align*}
for some $\xi\in\R$, where $\mathsf {Err}_1$ satisfies~\eqref{boundforE}. We now pull out the gcd of $ak$ and $bn$ for latter purpose.
To this end, we put 
\begin{align*}
d=\gcd(ak,bn),~~d_1 = \frac{d}{\gcd(a,d)},~~d_2=\frac{d}{\gcd(b,d)},~~a^* = \frac{a}{\gcd(a,d)},~~b^*=\frac{b}{\gcd(b,d)}.
\end{align*}
We observe that $p\nmid d$,  $\gcd(a,d)=\gcd(a,n)$, $\gcd(b,d)=\gcd(b,k)$, and $d\le D$ with
$$
D =\min\{|a|K,|b|N\}~~(<p^2).
$$
Note that $d\mid ak$ implies $d_1\mid k$, and similarly, $d\mid bn$ implies $d_2\mid n$. Hence, we have the relations $d_1\mid k$, $d_2\mid n$, $ad_1=a^*d$ and $bd_2=b^*d$.
Therefore, changing the variables 
\begin{align}\label{eq:changevariables:kn}
k \mapsto d_1k,\ \ \ n \mapsto d_2n,
\end{align}
 we obtain  the inequality 
\begin{equation}\label{eq:T1-T1(d)}
Q_1\ll\frac{1}{UVW}\sum_{\substack{d\leqslant D\\ p\nmid d}}Q_1(d) \,  + \mathsf {Err}_1 
\end{equation}
with
\begin{align*}
Q_1(d)
&=\mathop{\sum_{|k|\leqslant 2K/d_1}\sum_{|n|\leqslant 2N/d_2}}_{\gcd(a^*k,b^*n)=1}\mathop{\sum\sum\sum\sum}_{\substack{\ell_1,\ell_2\leqslant L,~ m_1,m_2\leqslant M\\ \ell_1m_2\not\equiv \ell_2m_1\bmod p}}\\&
\qquad \qquad\times\sum_{u\sim U}\sum_{w\sim W}\biggl| \sum_{v\sim V}\ue(v\xi)
\chi\(\overline{(a\ell_1u+bm_1w)}(ad_1k\ell_1+bd_2m_1n)+v\)\\
&\qquad \qquad\qquad \qquad\times\overline{\chi}\(\overline{(a\ell_2u+bm_2w)}(ad_1k\ell_2+bd_2m_2n)+v\)\biggr|.
\end{align*}
Writing $ad_1k\ell_1+bd_2m_1n= d (a^*k\ell_1+b^*m_1n)$, we now continue as
\begin{align*}
Q_1(d)
&=\mathop{\sum_{|k|\leqslant 2K/d_1}\sum_{|n|\leqslant 2N/d_2}}_{\gcd(a^*k,b^*n)=1}\mathop{\sum\sum\sum\sum}_{\substack{\ell_1,\ell_2\leqslant L,~ m_1,m_2\leqslant M\\ \ell_1m_2\not\equiv \ell_2m_1\bmod p}}\\
&
\qquad \times\sum_{u\sim U}\sum_{w\sim W}\biggl| \sum_{v\sim V}\ue(v\xi)
\chi\(d\overline{(a\ell_1u+bm_1w)}(a^*k\ell_1+b^*m_1n)+v\)\\
&\qquad \qquad\times\overline{\chi}\(d\overline{(a\ell_2u+bm_2w)}(a^*k\ell_2+b^*m_2n)+v\)\biggr|,
\end{align*}
which can be also interpreted as
\begin{align*}
Q_1(d)
&=\mathop{\sum_{|k|\leqslant 2K/d_1}\sum_{|n|\leqslant 2N/d_2}}_{\gcd(a^*k,b^*n)=1}\mathop{\sum\sum\sum\sum}_{\substack{\ell_1,\ell_2\leqslant L,~ m_1,m_2\leqslant M\\ \ell_1m_2\not\equiv \ell_2m_1\bmod p}}\\
&\qquad \times\sum_{\substack{u\sim \gcd(a,d) U\\ \gcd(a,d)\mid u}}\sum_{\substack{w\sim \gcd(b,d) W\\ \gcd(b,d)\mid w}}\biggl| \sum_{v\sim V}\ue(v\xi)
\chi\(d\overline{(a^*\ell_1u+b^*m_1w)}(a^*k\ell_1+b^*m_1n)+v\)\\
&\qquad \qquad\times\overline{\chi}\(d\overline{(a^*\ell_2u+b^*m_2w)}(a^*k\ell_2+b^*m_2n)+v\)\biggr|.
\end{align*}
Here we have used the formulas $a^*=a/\gcd(a,d)$, $b^*=b/\gcd(b,d)$.
Of course the variables of summation continue to satisfy~\eqref{eq:pdoesnotdivide} with necessary changes of variables as in \eqref{eq:changevariables:kn}.
We further impose a restriction that $kw\gcd(b,d)\neq nu\gcd(a,d)$,  which introduces  an additional error $\mathsf {Err}_2 (d)$ with
$$\sum_{d\leqslant D}\mathsf {Err}_2 (d) \leqslant KL^2M^2VW p^{o(1)}.$$
Next for $x_1,x_2 \in \F_p^*$, and for 
$$1\leqslant A\ll \mathscr{L}_2/d,\quad  1\leqslant B\ll |a|LU+|b|MW\ll \mathscr{L}_2/V, \quad  1\leqslant C\leqslant K/d_1,$$ 
we put 
\begin{align}\label{eq:rho(x1,x2)}
\varrho(x_1,x_2)=\mathop{\sum_{|k|\sim C}\sum_{|n|\leqslant 2N/d_2}\mathop{\sum\sum}_{\ell_1,\ell_2\leqslant L}\mathop{\sum\sum}_{m_1,m_2\leqslant M}\sum_{u\sim U}\sum_{w\sim W}}_{\substack{a^*k\ell_1+b^*m_1n\equiv (a \ell_1u+b m_1w)x_1\bmod p\\ a^*k\ell_2+b^*m_2n\equiv (a \ell_2u+b m_2w)x_2\bmod p\\ \ell_1m_2\not\equiv \ell_2m_1\bmod p,\quad kw\gcd(b,d)\neq nu\gcd(a,d),\quad\gcd(a^*k,b^*n)=1\\
|a^*k\ell_1+b^*m_1n|\sim A,~|a \ell_2u+b m_2w|\sim B}}1, 
\end{align}
where the condition~\eqref{eq:pdoesnotdivide} continues to apply to the variables of summation with necessary changes of variables as in \eqref{eq:changevariables:kn}.
 We now have
\begin{align*}
Q_1(d)\leqslant p^{o(1)}\sup_{\substack{1\leqslant A\ll \mathscr{L}_2/d\\
1\leqslant B\ll \mathscr{L}_2/V\\1\leqslant C\leqslant K/d_1}}\, 
\sum_{x_1,x_2 \in \F_p^*}  
\varrho(x_1,x_2)\left|\sum_{v\sim V}\ue(v\xi)\chi(dx_1+v) \overline{\chi}(dx_2+v)\right|+
\mathsf {Err}_2 (d).
\end{align*} 

Applying the H\"older inequality with a positive integer $r\geqslant2$, we find 
\begin{equation}\label{eq:T0(d)-Holder}
Q_1(d)\leqslant  Q_2(d) \,  p^{o(1)}  + \mathsf {Err}_2 (d),
\end{equation}
where 
\begin{equation}\label{eq:Q2(d)}
Q_2(d) = \sup_{\substack{1\leqslant A\ll \mathscr{L}_2/d\\
1\leqslant B\ll \mathscr{L}_2/V\\1\leqslant C\leqslant K/d_1}}\Sigma_1^{1-\frac{1}{r}}\(\Sigma_2\cdot \Sigma_3\)^{\frac{1}{2r}}
\end{equation}
with
$$
\Sigma_j=\Sigma_j(A,B,C) =
\mathop{\sum\sum}_{x_1,x_2 \in \F_p^*}  \varrho(x_1,x_2;A,B,C)^j,\qquad  j=1,2,
$$
and
$$
\Sigma_3= 
\mathop{\sum\sum}_{x_1,x_2 \in \F_p} 
\left|\sum_{v\sim V}\ue(v\xi)\chi(x_1+v)\overline{\chi}(x_2+v)\right|^{2r}.
$$

Trivially we have 
\begin{equation}\label{eq:Sigma1-sup}
\Sigma_1\ll CL^2M^2NUWd_2^{-1}.
\end{equation}
Furthermore, using Lemma~\ref{lm:DavenportErdos2}, we have the inequality
\begin{equation}\label{eq:Sigma3-sup}
\Sigma_3\ll V^{2r}p+V^rp^2.
\end{equation}
Next we estimate $\Sigma_2$.

\subsection{Bounding $\Sigma_2$}

This following bound is the core of our method.

\begin{lemma}\label{lm:Sigma2-upperbound}
With the above notation, we have
$$
\Sigma_2
\le (M/C+1)^2\(1+\frac{\mathscr{L}_2^2}{dpV}\)^2(ABLM)^{1+o(1)}(CW\gcd(b,d)+NU\gcd(a,d)/d_2)
$$
for all  $1\leqslant A\ll \mathscr{L}_2/d$, $1\leqslant B\ll \mathscr{L}_2/V$ and $1\leqslant C \leqslant K/d_1$.
\end{lemma}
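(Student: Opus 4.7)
The plan is to open $\Sigma_2 = \sum_{x_1, x_2 \in \F_p^*} \varrho(x_1, x_2)^2$ as the number of pairs of $8$-tuples $(v, \tilde v)$, each satisfying the side conditions of~\eqref{eq:rho(x1,x2)}, whose induced values $x_\nu$ agree modulo $p$ for $\nu = 1, 2$. Writing $A_\nu(v) = a^{*} k \ell_\nu + b^{*} m_\nu n$ and $B_\nu(v) = a \ell_\nu u + b m_\nu w$, the matching condition becomes
$$A_\nu(v)\,B_\nu(\tilde v) \equiv A_\nu(\tilde v)\,B_\nu(v) \pmod p, \quad \nu = 1, 2.$$

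First I would lift each congruence to an integer equation. Using the size bounds $|A_\nu|, |A_\nu(\tilde v)| \ll \mathscr{L}_2/d$ and $|B_\nu|, |B_\nu(\tilde v)| \ll \mathscr{L}_2/V$ (the latter via the choice~\eqref{eq:defUVW} giving $|a|LU + |b|MW \ll \mathscr{L}_2/V$), each difference $A_\nu(v) B_\nu(\tilde v) - A_\nu(\tilde v) B_\nu(v)$ has absolute value $\ll \mathscr{L}_2^2/(dV)$, so the equation $A_\nu(v) B_\nu(\tilde v) - A_\nu(\tilde v) B_\nu(v) = s_\nu p$ forces $|s_\nu| \ll 1 + \mathscr{L}_2^2/(dpV)$. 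Summing over the admissible pairs $(s_1, s_2)$ thus contributes the factor $(1 + \mathscr{L}_2^2/(dpV))^2$ in the target bound.

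For each fixed $(s_1, s_2)$, I would count the $16$-tuples satisfying the two Diophantine equations. Rearranging, they take the form
$$\ell_\nu X_\nu + m_\nu Y_\nu = s_\nu p, \quad \nu = 1, 2,$$
with $X_\nu = a^{*} k\, B_\nu(\tilde v) - a u\, A_\nu(\tilde v)$ and $Y_\nu = b^{*} n\, B_\nu(\tilde v) - b w\, A_\nu(\tilde v)$; crucially, when expanded, $X_\nu$ and $Y_\nu$ are themselves of the bilinear form $\alpha k + \beta u$ and $\gamma n + \delta w$ (in the $k, u$ and $n, w$ variables) with coefficients from the tilded variables, so the whole system mirrors the shape of the quantities $a_\nu, b_\nu$ in Lemma~\ref{lm:gcdsum}. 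After performing the substitutions $k \mapsto d_1 k$ and $n \mapsto d_2 n$ (already effected in~\eqref{eq:changevariables:kn}) and absorbing $\gcd(a,d)$ and $\gcd(b,d)$ into the effective ranges of $u$ and $w$, I would apply Lemma~\ref{lm:gcdsum} with rescaled parameters $(K, N, U, W) \leftarrow (C, 2N/d_2, \gcd(a,d) U, \gcd(b,d) W)$; this yields the main factor $ABLM(CW \gcd(b,d) + NU \gcd(a,d)/d_2)$ in the target.

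Finally, the factor $(M/C + 1)^2$ will emerge from the enumeration of the $(\ell_\nu, m_\nu)$-solutions of the linear Diophantine equations above after the outer variables have been fixed: for each $\nu$, the number of lattice points in the box $\ell_\nu \leq L, m_\nu \leq M$ lying on the line $\ell_\nu X_\nu + m_\nu Y_\nu = s_\nu p$ is controlled by a $\gcd$-weighted count giving $O(M/C + 1)$ per $\nu$, and the two copies together produce the squared factor. The main obstacle will be the careful bookkeeping: tracing the arithmetic data $d, d_1, d_2, a^{*}, b^{*}, \gcd(a,d), \gcd(b,d)$ through the variable changes, verifying the non-degeneracy conditions $kw \gcd(b,d) \neq nu \gcd(a,d)$ and $\ell_1 m_2 \not\equiv \ell_2 m_1 \pmod p$ survive the rescalings (and handling the diagonal contributions when they fail), and ensuring the coprimality constraint $\gcd(a^{*} k, b^{*} n) = 1$ is compatible with the hypothesis $kw \ne nu$ required by Lemma~\ref{lm:gcdsum}.
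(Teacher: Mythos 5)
Your plan correctly identifies the outer shell of the argument: opening $\Sigma_2$ as a count of pairs of $8$-tuples, lifting the two congruences mod $p$ to integer equations with bounded shift parameters of size $\ll 1+\mathscr{L}_2^2/(dpV)$, and ultimately invoking Lemma~\ref{lm:gcdsum} with the rescaled ranges $(C, 2N/d_2, \gcd(a,d)U, \gcd(b,d)W)$. But the two middle steps that produce the precise factors $\gcd(a_1,b_1)\gcd(a_2,b_2)$ (which Lemma~\ref{lm:gcdsum} needs as its summand) and $(M/C+1)^2$ are not established, and the route you sketch does not actually lead there.

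The paper does \emph{not} isolate $\ell_\nu,m_\nu$ as you do in $\ell_\nu X_\nu + m_\nu Y_\nu = s_\nu p$; instead, fixing the non-tilde tuple, it treats the two tilde-products $x_\nu = a^*\widetilde\ell_\nu\widetilde u + b^*\widetilde m_\nu\widetilde w$ and $y_\nu = a^*\widetilde k\widetilde\ell_\nu + b^*\widetilde m_\nu\widetilde n$ as the unknowns of a scalar Diophantine system $a_\nu x_\nu = b_\nu y_\nu + z_\nu p$ whose coefficients $a_\nu=a^*k\ell_\nu+b^*m_\nu n$, $b_\nu=a^*\ell_\nu u+b^*m_\nu w$ are the quantities Lemma~\ref{lm:gcdsum} sums over. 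The number of admissible $(x_1,x_2,y_1,y_2)$ is then $\ll\gcd(a_1,b_1)\gcd(a_2,b_2)$, which is precisely what feeds into Lemma~\ref{lm:gcdsum}. In your arrangement the coefficients $X_\nu,Y_\nu$ mix tilde and non-tilde data, so there is no variable ordering that turns the count into a sum of $\gcd(a_1,b_1)\gcd(a_2,b_2)$ over the non-tilde variables; you have named the tool but not the reduction that makes it applicable.

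The $(M/C+1)^2$ factor also does not come from lattice-point counting on the line $\ell_\nu X_\nu + m_\nu Y_\nu = s_\nu p$, and indeed such a count would give $1+\min(L/|Y_\nu'|,M/|X_\nu'|)$ with $X_\nu'=X_\nu/\gcd(X_\nu,Y_\nu)$, which has no a priori relation to $M/C$ because $\gcd(X_\nu,Y_\nu)$ can be large. In the paper's argument the factor $M/C$ arises at a later stage, when for a fixed solution $(x_\nu,y_\nu)$ one parametrizes the tilde solutions of the $2\times2$ system $a^*\widetilde k\widetilde\ell_\nu + b^*\widetilde m_\nu\widetilde n=y_\nu$ via $\widetilde\ell_\nu=\fl_\nu - s_\nu b^*\widetilde n$, $\widetilde m_\nu=\fm_\nu+s_\nu a^*\widetilde k$, and the box constraint $\widetilde m_\nu\le M$ together with $|\widetilde k|\sim C$ forces $|s_\nu|\ll M/C$; the two indices $\nu=1,2$ then give the square. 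Your sketch conflates these two distinct counting steps into one lattice-point heuristic that does not yield the stated bound, so this must be treated as a genuine gap rather than an alternative derivation.
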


\begin{proof}
Let $U_1=\gcd(a,d) U$ and $W_1=\gcd(b,d) W$. We operate the changes of variables 
\begin{align}\label{eq:changevariables:uw}
u\mapsto \frac{u}{\gcd(a, d)},\ \ w\mapsto \frac{w}{\gcd(b, d)}
\end{align}
 in the definition \eqref{eq:rho(x1,x2)} of $\rho(x_1,x_2)$.
Note that $\Sigma_2$ is bounded by the number of tuples with length 16
$$
(k,\widetilde k,\ell_1,\widetilde \ell_1,\ell_2,\widetilde \ell_2,m_1,\widetilde m_1,
m_2,\widetilde m_2, n,\widetilde n,u,\widetilde u,w,\widetilde w)
$$ 
satisfying
\begin{align*}
|k|,|\widetilde k|\sim  C,&\quad  \ell_1,\widetilde \ell_1,\ell_2,\widetilde \ell_2\leqslant L, \quad m_1,\widetilde m_1,m_2,\widetilde m_2 \leqslant M,\quad  |n|,| \widetilde n|\leqslant 2N/d_2,\nonumber\\
  u,\widetilde u \sim U_1,&\quad w,\widetilde w\sim W_1, \quad  kw\neq nu,\quad  \widetilde k\widetilde w\neq  \widetilde n \widetilde u,\quad\gcd(a^*k,b^*n)=\gcd(a^*\widetilde k, b^*\widetilde n)=1,\nonumber\\
  |a^*k\ell_1+b^*m_1n|&\sim A,\quad |a^*\widetilde k\widetilde \ell_1+  b^*\widetilde m_1  \widetilde n|\sim A, \quad |a^*\ell_2u+b^*m_2w|\sim B,\quad |a^*\widetilde \ell_2\widetilde u+  b^*\widetilde m_2 \widetilde w|\sim B,\nonumber\\
&  \qquad \ell_1m_2\not\equiv \ell_2m_1\bmod p, \qquad  \widetilde \ell_1    \widetilde m_2 \not\equiv \widetilde \ell_2   \widetilde m_1\bmod p,
\end{align*}
with the additional non-divisbility conditions
\begin{equation}
 p \nmid (a^*\ell_1u +b^*m_1w)(a^*\ell_2u+b^*m_2w)(a^*k\ell_1+b^*m_1n)(a^*k\ell_2+b^*m_2n),\label{eq:pdoesnotdivide-afterchangevariables1}
 \end{equation}
and 
\begin{equation}
p \nmid (a^*\widetilde \ell_1\widetilde u +b^*\widetilde m_1\widetilde w)(a^*\widetilde \ell_2\widetilde u+b^*\widetilde m_2\widetilde w)(a^*\widetilde k\widetilde \ell_1+b^*\widetilde m_1\widetilde n)(a^*\widetilde k\widetilde \ell_2+b^*\widetilde m_2\widetilde n),\label{eq:pdoesnotdivide-afterchangevariables2}
\end{equation}
and also such that 
\begin{equation}
(a^*k\ell_1+b^*m_1n)(a^*\widetilde \ell_1\widetilde u+b^*\widetilde m_1\widetilde w)=(a^*\widetilde k\widetilde \ell_1+b^*\widetilde m_1\widetilde n)(a^*\ell_1u+b^*m_1w)+z_1p,\label{eq:diophantine1}
\end{equation}
and 
\begin{equation}
(a^*k\ell_2+b^*m_2n)(a^*\widetilde \ell_2\widetilde u+b^*\widetilde m_2\widetilde w)=(a^*\widetilde k\widetilde \ell_2+b^*\widetilde m_2\widetilde n)(a^*\ell_2u+b^*m_2w)+z_2p,\label{eq:diophantine2}
\end{equation}
with some  $0\leqslant |z_1|,|z_2|\leqslant  Z$
where 
$$Z=1+4(|a|KL+|b|MN)(|a|LU+|b|MW)/(dp),$$
so by hypothesis we have the inequality
\begin{equation}\label{ineqfor Z}
Z\ll 1 +  \frac{\mathscr{L}_2^2}{dpV}.
\end{equation}
Note that the conditions \eqref{eq:pdoesnotdivide-afterchangevariables1} and \eqref{eq:pdoesnotdivide-afterchangevariables2} are resulted by \eqref{eq:pdoesnotdivide} with changes of variables as in \eqref{eq:changevariables:kn} and \eqref{eq:changevariables:uw}.

We now fix $k,\ell_1,\ell_2,m_1,m_2,n,u,w$, and put
$$
a_j=a^*k\ell_j+b^*m_jn,\quad b_j=a^*\ell_ju+b^*m_jw
$$
for $j=1,2$. Note that $$|a_1|\sim A,\quad |a_2|\leqslant2\mathscr{L}_2/d, \quad |b_1|\leqslant\mathscr{L}_2/V, \quad  |b_2|\sim B,
$$ with $a_1a_2b_1b_2\neq0$. Given $z_1,z_2$ with $0\leqslant |z_1|,|z_2|\leqslant  Z$ as above, we now look at the equations
\begin{equation}\label{eq:Diophantine-a1a2b1b2}
a_1x_1=b_1y_1+z_1p,\quad a_2x_2=b_2y_2+z_2p
\end{equation}
in $x_1,x_2,y_1,y_2\in \Z$ with $|x_1|\leqslant \mathscr{L}_2/V$, $|x_2|\sim B$, $|y_1|\sim A$ and $|y_2|\leqslant 2\mathscr{L}_2/d$. It is clear that the number of such tuples $(x_1,x_2,y_1,y_2)$ satisfying~\eqref{eq:Diophantine-a1a2b1b2} is bounded, up to an absolute constant, by 
\begin{equation}
\label{eq:gcd gcd}
\(\frac{A\gcd(a_1,b_1)}{a_1}+1\)\(\frac{B\gcd(a_2,b_2)}{b_2}+1\)
\ll\gcd(a_1,b_1)\gcd(a_2,b_2).
\end{equation}

We claim that, for given $k,\ell_1,\ell_2,m_1,m_2,n,u,w,z_1,z_2$ as above, the number $\cN$ of tuples $(\widetilde k,\widetilde \ell_1,\widetilde \ell_2,\widetilde m_1,\widetilde m_2,\widetilde n,\widetilde u,\widetilde w)$ satisfying the above-mentioned conditions satisfies
\begin{align}\label{eq:numberof8variables}
\cN\ll (M/C+1)^2.
\end{align}
Hence
\begin{align*}
\Sigma_2
&\ll (M/C+1)^2 Z^2\mathop{\sum_{|k|\sim C}\mathop{\sum\sum}_{\ell_1,\ell_2\leqslant L}\mathop{\sum\sum}_{m_1,m_2\leqslant M}\sum_{|n|\leqslant 2N/d_2}\sum_{u\sim U_1}\sum_{w\sim W_1}}_{\substack{a^*k\ell_1+b^*m_1n=a_1,~a^*\ell_1u+b^*m_1w=b_1\\ a^*k\ell_2+b^*m_2n=a_2,~a^*\ell_2u+b^*m_2w=b_2\\|a_1|\sim A,~|b_2|\sim B,~kw\neq nu}}\gcd(a_1,b_1)\gcd(a_2,b_2)
\end{align*} 
in view of \eqref{eq:gcd gcd}. Then the desired bound for $\Sigma_2$ follows from~\eqref{ineqfor Z} and Lemma~\ref{lm:gcdsum}. 

It suffices to prove \eqref{eq:numberof8variables}, and keep in mind that $k,\ell_1,\ell_2,m_1,m_2,n,u,w,z_1,z_2$ are all fixed.
We now  fix one of such  tuples $(x_1,x_2,y_1,y_2)$, satisfying~\eqref{eq:Diophantine-a1a2b1b2}.
Then~\eqref{eq:diophantine1} and~\eqref{eq:diophantine2} lead us to consider the system of four equations
\begin{equation}\label{eq:Diophantine-x1x2y1y2}
\begin{cases}
a^*\widetilde \ell_1\widetilde u+b^*\widetilde m_1\widetilde w=x_1,\\
a^*\widetilde k\widetilde \ell_1+b^*\widetilde m_1\widetilde n=y_1,\\
a^*\widetilde \ell_2\widetilde u+b^*\widetilde m_2\widetilde w=x_2,\\
a^*\widetilde k\widetilde \ell_2+b^*\widetilde m_2\widetilde n=y_2, 
\end{cases}
\end{equation}
in $\widetilde k,\widetilde \ell_1,\widetilde \ell_2,\widetilde m_1,\widetilde m_2,\widetilde n,\widetilde u,\widetilde w\in \Z$. 
Recalling the restriction
$\widetilde \ell_1   \widetilde m_2 \not\equiv \widetilde \ell_2   \widetilde m_1\bmod p$,
it suffices to consider the solutions satisfying $\widetilde \ell_1\widetilde m_2\neq \widetilde \ell_2\widetilde m_1$.

We now fix integers $\fl_1,\fl_2,\fm_1$ and $\fm_2$ satisfying
\begin{equation}\label{eq:m1m2l1l2fixed}
\fl_1,\fl_2\leqslant L,\quad \fm_1,\fm_2\leqslant M,\quad 
\fl_1\fm_2\neq \fl_2\fm_1.
\end{equation}
such that the two equations
\begin{equation}\label{eq:m1m2l1l2fixed-equation(k,n)}
a^*\widetilde k\fl_1+b^*\fm_1\widetilde n=y_1,\quad a^*\widetilde k\fl_2+b^*\fm_2\widetilde n=y_2
\end{equation}
are solvable in $\widetilde k,\widetilde n$ with $\gcd(\widetilde k,\widetilde n)=1$, given the above $\fl_1,\fl_2,\fm_1,\fm_2$. Note that the system~\eqref{eq:m1m2l1l2fixed-equation(k,n)} has at most one solution $(\widetilde k,\widetilde n)$ because its determinant does not vanish. Suppose that a solution to~\eqref{eq:m1m2l1l2fixed-equation(k,n)} does exist. Then by~\eqref{eq:m1m2l1l2fixed} all solutions to the second equation in~\eqref{eq:Diophantine-x1x2y1y2} are of the shape
$$
\widetilde \ell_1=\fl_1-s_1b^*\widetilde n, \quad  \widetilde m_1=\fm_1+s_1a^*\widetilde k,\quad 0\leqslant |s_1|
\ll M/|\widetilde k|
\ll M/C,
$$
and all solutions to the fourth equation in~\eqref{eq:Diophantine-x1x2y1y2} are of the shape
$$
\widetilde \ell_2= \fl_2-s_2b^*\widetilde n,\quad  \widetilde m_2=\fm_2+s_2a^*\widetilde k,\quad   0\leqslant |s_2|
\ll M/|\widetilde k|
\ll M/C.
$$
After determining $\widetilde \ell_1,\widetilde \ell_2, \widetilde m_1,\widetilde m_2$ with $\widetilde \ell_1\widetilde m_2\neq \widetilde \ell_2\widetilde m_1$, we may find at most one tuple $(\widetilde u,\widetilde w)$ satisfying the first and third equations in~\eqref{eq:Diophantine-x1x2y1y2} simultaneously.

We have proved \eqref{eq:numberof8variables} so far, and thus completed the proof of Lemma~\ref{lm:Sigma2-upperbound}.
\end{proof}

Subsequently,  by Lemma~\ref{lm:Sigma2-upperbound}, for all $1\leqslant A\ll \mathscr{L}_2/d$, $1\leqslant B\ll |a|LU+|b|MW\ll \mathscr{L}_2/V$ and $1\leqslant C \leqslant K/d_1$, we have the bound 
\begin{equation}\label{eq:Sigma2-sup}
\begin{split}
\Sigma_2
\le (M/C+1)^2\(1+\frac{\mathscr{L}_2^2}{dpV}\)^2&\frac{(\mathscr{L}_2^2LM)^{1+o(1)}}{dV}\\
& \left(CW\gcd(b,d)+NU\gcd(a,d)/d_2\right).
\end{split}
\end{equation}

\subsection{Concluding the proof of  Theorem~\ref{thm:quadrilinearform}}
We substitute  the bounds~\eqref{eq:Sigma1-sup},  \eqref{eq:Sigma3-sup} and~\eqref{eq:Sigma2-sup} 
into~\eqref{eq:Q2(d)} and 
note that 
$$
d_1d_2 = \frac{d^2}{\gcd(a,d)\gcd(b,d)}  =  \frac{d^2}{\gcd(ab,d)} 
$$
since we have assumed $\gcd(a,b)=1$. 
Hence, we derive  
\begin{align*}
Q_2(d)
&\le p^{o(1)}\sup_{\substack{C\leqslant K/d_1}}\(CL^2M^2NUWd_2^{-1}\)^{1-\frac{1}{r}}(M/C+1)^{\frac{1}{r}}\(1+\frac{\mathscr{L}_2^2}{dpV}\)^{\frac{1}{r}}\(\frac{\mathscr{L}_2^2LM}{dV}\)^{\frac{1}{2r}}\\
& \qquad \qquad   \qquad   \qquad   \qquad  \times \(CW\gcd(b,d)+NU\gcd(a,d)/d_2\)^{\frac{1}{2r}}(V^{2r}p+V^rp^2)^{\frac{1}{2r}}.
\end{align*}
It is easy to see that in the last expression, after expanding, $C$ appears only in positive powers, and therefore the supremum
is attained at $C = K/d_1$. Hence,
\begin{align*}
Q_2(d)
&\le p^{o(1)}\(KL^2M^2NUW/d^2\)^{1-\frac{1}{r}}(d_1M/K+1)^{\frac{1}{r}}\(1+\frac{\mathscr{L}_2^2}{dpV}\)^{\frac{1}{r}}
\(\frac{\mathscr{L}_2^2LM}{dV}\)^{\frac{1}{2r}}\\
& \qquad  \times \gcd(ab,d)^{1-\frac{1}{r}} \(KW\gcd(b,d)/d_1+NU\gcd(a,d)/d_2\)^{\frac{1}{2r}}(V^{2r}p+V^rp^2)^{\frac{1}{2r}}\\
&\le p^{o(1)}\(KL^2M^2NUW/d^2\)^{1-\frac{1}{r}}(d_1M/K+1)^{\frac{1}{r}}\(1+\frac{\mathscr{L}_2^2}{dpV}\)^{\frac{1}{r}}\(\frac{\mathscr{L}_2^2KLMN}{d^2V^2}\)^{\frac{1}{2r}}\\
& \qquad \qquad \qquad \qquad \qquad \qquad  \qquad   \qquad   \qquad   \quad   \times \gcd(ab,d)^{1-\frac{1}{2r}}(V^{2r}p+V^rp^2)^{\frac{1}{2r}}.
\end{align*}
We also note that  
$$
d^{-2+\frac{2}{r}}\cdot d_1^{\frac{1}{r}}\cdot d^{-\frac{1}{r}}\cdot \gcd(ab,d)^{1-\frac{1}{2r}}\leqslant d^{-2+\frac{2}{r}}\gcd(ab,d)^{1-\frac{1}{2r}}\leqslant d^{-1}\gcd(ab,d)^{1-\frac{1}{2r}}
$$ 
fo $r \ge 2$. 
Recalling~\eqref{boundforE}, \eqref{eq:T1-T1(d)} and~\eqref{eq:T0(d)-Holder} and then summing over $d\leqslant D$, we obtain
\begin{align*}
Q_1
&\ll\frac{p^{o(1)}}{UVW}\sum_{d\leqslant D}\(Q_2(d)+ \mathsf {Err}_2 (d)\)+ \mathsf {Err}_1 \\
&\le p^{o(1)}\(KL^2M^2N\)^{1-\frac{1}{r}}(UVW)^{-\frac{1}{r}}(M/K+1)^{\frac{1}{r}}\(1+\frac{\mathscr{L}_2^2}{pV}\)^{\frac{1}{r}}\\
&  \qquad \qquad \qquad \qquad \qquad \qquad  \times \(\mathscr{L}_2^2KLMNp\)^{\frac{1}{2r}}(1+V^{-\frac{1}{2}}p^{\frac{1}{2r}})\\
&\qquad \qquad \qquad \qquad \qquad \qquad \qquad  +KL^2M^2U^{-1}p^{o(1)}+KL^2M\(V+\frac{MN}{p}\)p^{o(1)}.
\end{align*}
Taking 
$$
V=p^{\frac{1}{r}},  \qquad  U=\frac{1}{4}Kp^{-\frac{1}{r}}, \qquad W=\frac{1}{4}Np^{-\frac{1}{r}},
$$
so that the assumption~\eqref{eq:defUVW} is satisfied, we derive that
\begin{align*}
Q_1
&\le \(KL^2M^2N\)^{1-\frac{1}{r}}(KN)^{-\frac{1}{r}}(M/K+1)^{\frac{1}{r}}\(1+\frac{\mathscr{L}_2^2}{p^{1+\frac{1}{r}}}\)^{\frac{1}{r}}\(\mathscr{L}_2^2KLMN\)^{\frac{1}{2r}}p^{\frac{1}{2r}+\frac{1}{r^2}+o(1)}\\
&\qquad \qquad   \qquad   \qquad   \qquad   \qquad   \qquad  +(LM)^2p^{\frac{1}{r}+o(1)}+KL^2M\(p^{\frac{1}{r}}+\frac{MN}{p}\)p^{o(1)}\\
&\le \(KLMN\)^{2-\frac{3}{2r}}(KN)^{-1}(M/K+1)^{\frac{1}{r}}\(1+\frac{\mathscr{L}_2^2}{p^{1+\frac{1}{r}}}\)^{\frac{1}{r}}\mathscr{L}_2^{\frac{1}{r}}p^{\frac{1}{2r}+\frac{1}{r^2}+o(1)}\\
&\qquad \qquad  \qquad   \qquad  \qquad   \qquad   \qquad   +(LM)^2p^{\frac{1}{r}+o(1)}+KL^2M\(p^{\frac{1}{r}}+\frac{MN}{p}\)p^{o(1)}.
\end{align*}
It remains to recall~\eqref{eq:quadrilinear-Q} and~\eqref{eq:Q-Q1}, to  conclude the proof of Theorem~\ref{thm:quadrilinearform}.

\section{Proof of Theorem~\ref{thm:IS-analogue}}\label{sec:proofofIS-analogue}
Let $p>\max\{K,M,N\}$.
Consider the trilinear sum of Legendre symbols
$$
\fS=\sum_{k\leqslant K}\sum_{m\in\cM}\sum_{n\in\cN}\(\frac{k+mn}{p}\)
$$
with 
\begin{equation}\label{eq:K-choice}
K=\floor{p^{1+\frac {1}{r}+\eta}(MN)^{-2}+p^\eta}.
\end{equation}
Suppose that the values of $k+mn$, for all given $k,m,n$
in the above ranges, are always quadratic non-residues or zero modulo $p$. Then
we see from~\eqref{lowerdensity} that $\fS$ satisfies the trivial bound
$$
|\fS|\geqslant c_0^2 KMN+O((MN/p+1)Kp^{o(1)}),
$$
where the error term accounts for the cases with $k+mn \equiv 0 \pmod p$.

Therefore, Theorem~\ref{thm:IS-analogue} is proved as soon as we have the bound
\begin{equation}\label{eq:S=o(KMN)}
\fS= o(KMN)
\end{equation}
under the hypotheses~\eqref{condforMandN} and $p$ tending to infinity.

We first consider the case $(MN)^2> p^{1+\frac 1r}$, for which we see from~\eqref{eq:K-choice} that $K\asymp p^\eta$. Now~\eqref{eq:S=o(KMN)} is an immediate consequence of Karatsuba presented already in~\eqref{eq:Karatsuba-implicit} and~\eqref{eq:PV-threshold}. We henceforth assume that $(MN)^2\leqslant p^{1+\frac 1r}$.
Applying Theorem~\ref{thm:TypeII} with the choice
$$ 
a=b=1,\qquad \alpha_m= {\bf 1}_{\cM}(m), \qquad \beta_{k,n} ={\bf 1}_{[1,K]}(k) \cdot {\bf 1}_{\cN}(n),
$$
now~\eqref{eq:S=o(KMN)} is proved as soon as one has the following three inequalities
$$
M\geq K^{\frac 12},\qquad  
K(MN)^2 \geq p^{1+\frac 1r +\frac \eta 2},\qquad 
M\geq p^{\frac{\eta}4}.
$$

The choice~\eqref{eq:K-choice}
guarantees the above three conditions provided that
$$
M\geq p^{\frac{\eta}4} \mand  M^4N^2>p^{1+\frac{1}{r}+\eta}.
$$
The restriction $M\geq p^{\frac{\eta}4}$ can also be dropped, since otherwise we should have $N^2>p^{1+\frac{1}{r}}$,
and we may instead appeal to Karatsuba by noting that $K\geqslant p^\eta$ thanks to the choice~\eqref{eq:K-choice}.
This completes the proof of Theorem~\ref{thm:IS-analogue}.

\appendix
\section{Karatsuba's bound for double character sums}
\label{appendix:Karatsuba} 

Recall that
$$
\fB(\balpha,\bbeta) = \sum_{m\in \cM} \sum_{n \in \cN} \alpha_m\beta_n\chi(m + n)
$$
as in~\eqref{eq:bilinearform-Vinogradov}. We now give a proof of~\eqref{eq:Karatsuba-implicit} in the range of~\eqref{eq:PV-threshold}. More precisely, we have the following explicit estimate for $\fB(\balpha,\bbeta)$.

\begin{theorem}\label{thm:Karatsuba}
Let $\chi$ be a non-trivial character of $\F_p^\times$ and $\cM,\cN\subseteq\F_p$ two arbitrary subsets with cardinalities $M,N$, respectively. For each positive integer $r$, we have
$$
\fB(\balpha,\bbeta)
\ll \|\balpha\|_1^{1-\frac{1}{r}}\|\balpha\|_2^{\frac{1}{r}}\|\bbeta\|_\infty(Np^{\frac{1}{4r}}+N^{\frac{1}{2}}p^{\frac{1}{2r}}),
$$
where the implicit constant depends only on $r$.
\end{theorem}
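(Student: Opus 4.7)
\medskip

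\noindent\textbf{Proof proposal for Theorem~\ref{thm:Karatsuba}.}
The plan is to reduce the bilinear sum to a pure high moment of the inner character sum, which can then be handled by Lemma~\ref{lm:DavenportErdos}. Write $T_m = \sum_{n\in\cN}\beta_n\chi(m+n)$, so that $\fB(\balpha,\bbeta)=\sum_{m\in\cM}\alpha_m T_m$. The strategy for extracting both norms $\|\balpha\|_1^{1-1/r}$ and $\|\balpha\|_2^{1/r}$ is to apply two rounds of H\"older-type inequalities before invoking the moment estimate.

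For the first round, I would split
$$
|\alpha_m|\,|T_m|=|\alpha_m|^{1-\frac{1}{r}}\cdot |\alpha_m|^{\frac{1}{r}}|T_m|
$$
and apply H\"older with conjugate exponents $r/(r-1)$ and $r$. This yields
$$
\sum_{m\in\cM}|\alpha_m||T_m|\leq \|\balpha\|_1^{1-\frac{1}{r}}\Bigl(\sum_{m\in\cM}|\alpha_m|\,|T_m|^r\Bigr)^{\frac{1}{r}}.
$$
For the second round, I would apply the Cauchy--Schwarz inequality on the remaining sum to separate $|\alpha_m|$ from $|T_m|^{r}$, obtaining
$$
\sum_{m\in\cM}|\alpha_m|\,|T_m|^r\leq \|\balpha\|_2 \Bigl(\sum_{m\in\cM}|T_m|^{2r}\Bigr)^{\frac{1}{2}}.
$$
Combining these gives the clean bound $|\fB(\balpha,\bbeta)|\leq \|\balpha\|_1^{1-\frac{1}{r}}\|\balpha\|_2^{\frac{1}{r}}\bigl(\sum_{m\in\cM}|T_m|^{2r}\bigr)^{\frac{1}{2r}}$.

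For the moment $\sum_{m\in\cM}|T_m|^{2r}$, I would extend the range of summation from $\cM$ to all of $\F_p$, normalize by pulling out $\|\bbeta\|_\infty^{2r}$ (so the weights $\gamma_n=\beta_n/\|\bbeta\|_\infty$ have sup-norm at most~$1$), and apply Lemma~\ref{lm:DavenportErdos} with $\cA=\cN$. This gives
$$
\sum_{m\in\F_p}|T_m|^{2r}\leq \|\bbeta\|_\infty^{2r}(2r)^r\bigl(N^r p+N^{2r}p^{\frac{1}{2}}\bigr).
$$
Taking $(2r)$-th roots and using the elementary inequality $(a+b)^{\frac{1}{2r}}\leq a^{\frac{1}{2r}}+b^{\frac{1}{2r}}$ for non-negative $a,b$, the parenthesised factor becomes $\ll_r N^{\frac{1}{2}}p^{\frac{1}{2r}}+Np^{\frac{1}{4r}}$, which matches the statement exactly.

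No step is really an obstacle here: the H\"older/Cauchy--Schwarz manipulations are routine, and the genuine arithmetic input is already supplied by Lemma~\ref{lm:DavenportErdos}, whose strength ultimately rests on Weil's bound. The only subtlety is making sure the exponents in the splitting $|\alpha_m|=|\alpha_m|^{1-1/r}|\alpha_m|^{1/r}$ are arranged so that the first round produces the $\|\balpha\|_1^{1-1/r}$ factor while leaving $|\alpha_m|$ (not $|\alpha_m|^2$) inside to be dispatched by Cauchy--Schwarz in the second round.
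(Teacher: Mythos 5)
Your proposal is correct and follows essentially the same route as the paper: a H\"older step to reduce to the $2r$-th moment $W=\sum_{m\in\cM}|T_m|^{2r}$, then extending the sum over $m$ to all of $\F_p$ (discarding the structure of $\cM$) and applying Lemma~\ref{lm:DavenportErdos}. The only cosmetic difference is that the paper packages the norm extraction as a single application of H\"older's inequality, whereas you split it into a H\"older step with exponents $r/(r-1),r$ followed by Cauchy--Schwarz; these are equivalent and yield the identical bound $\|\balpha\|_1^{1-1/r}\|\balpha\|_2^{1/r}W^{1/(2r)}$.
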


\begin{remark}
Theorem~\ref{thm:Karatsuba} implies that
$$
\fB(\balpha,\bbeta)
\ll \|\balpha\|_\infty\|\bbeta\|_\infty MN(M^{-\frac{1}{2r}}p^{\frac{1}{4r}}+M^{-\frac{1}{2r}}N^{-\frac{1}{2}}p^{\frac{1}{2r}}).
$$
This readily implies~\eqref{eq:Karatsuba-implicit} for
$$
M>p^{\frac{1}{2}+\eta} \mand N>p^\eta
$$
by taking $r>1/\eta$.
\end{remark}

The proof is quite short.
From  the H{\"o}lder inequality, it follows that
$$
|\fB(\balpha,\bbeta)|   \le \sum_{m\in \cM} |\alpha_m|  \left|\sum_{n \in \cN} \beta_{n}\chi(m + n)\right|\le \|\balpha\|_1^{1-\frac{1}{r}}\|\balpha\|_2^{\frac{1}{r}}W^{\frac{1}{2r}},
$$
where 
$$
W=\sum_{m\in\cM}\left|\sum_{n \in \cN} \beta_{n}\chi(m + n)\right|^{2r}.
$$
The ingredient here is that one enlarges the sum over $m$ to $\F_p$, and ignores the structure of $\cM$, so that
$$
W\leqslant\sum_{m\in\F_p}\left|\sum_{n \in \cN} \beta_{n}\chi(m + n)\right|^{2r}.
$$
Now Theorem~\ref{thm:Karatsuba} follows immediately from Lemma~\ref{lm:DavenportErdos}.

\section*{Acknowledgements}

During the preparation of this work I.S. was supported in part by the  
Australian Research Council  Grants DP230100530 and DP230100534, and 
P.X. was supported in part by the National Natural Science Foundation of China (No.~12025106).

\bibliographystyle{plainnat}

\begin{thebibliography}{abcdefghi}


\bibitem[BS20]{BS20}
W. Banks and I. E. Shparlinski,
Congruences with intervals and arbitrary sets,
{\it Arch. Math. (Basel)}, {\bf 114} (2020),  527--539.

\bibitem[BGKS12]{BGKS12}
J.~Bourgain, M.~Z.~Garaev, S. V. Konyagin and I. E. Shparlinski, 
On the hidden shifted power problem, 
\emph{SIAM J. Comp.} \textbf{41} (2012), 1524--1557.

\bibitem[BGKS13]{BGKS13}
J.~Bourgain, M.~Z.~Garaev, S. V. Konyagin and I. E. Shparlinski,
On congruences with products of variables from short intervals and  applications,
\emph{Proc. Steklov Inst. Math.} \textbf{280} (2013), 61--90
(Transl. from \emph{Trudy Mat. Inst. Steklov}).
  

\bibitem[BKS15]{BKS15}
J.~Bourgain,  S. V. Konyagin and I. E. Shparlinski, 
Character sums and deterministic polynomial root finding in finite fields, 
\emph{Math. Comp.} \textbf{84} (2015), 2969--2977.

\bibitem[Ch08]{Ch08}
M.-C. Chang, On a question of Davenport and
Lewis and new character sum bounds in finite fields,
\emph{Duke Math. J.}  \textbf{145} (2008), 409--442.

\bibitem[Ch10]{Ch10} M.-C. Chang, Character sums in finite fields, \emph{Contemp. Math.} 
\textbf{518}, 
Amer. Math. Soc., Providence, RI, 2010, 83--98.

\bibitem[Ch73]{Ch73}
J. R. Chen,
On the representation of a larger even integer as the sum of a prime and the product of at most two primes,
\emph{Sci. Sinica.}  \textbf{16} (1973), 157--176.

\bibitem[DE52]{DE52} 
H. Davenport  and P. Erd\H{o}s,
The distribution of quadratic and higher residues,
\emph{Publ. Math. Debrecen} \textbf{2} (1952), 252--265.


\bibitem[DBSW21]{DBSW21}
D. Di Benedetto, J. Solymosi and E. P. White, 
On the directions determined by a Cartesian product in an affine Galois plane,
\emph{Combinatorica} \textbf{41} (2021), 755--763.



\bibitem[Fo82]{Fo82} 
{\'E}. Fouvry, 
R\' epartition des suites dans les progressions arithm\' etiques, 
\emph{Acta Arith.} \textbf{41} (1982), 359--382.

\bibitem[FS23]{FS23}
{\'E}.  Fouvry and I. E. Shparlinski, 
On character sums with determinants, 
\emph{Sci. China Math.} \textbf{66} (2023), 2693--2714.

\bibitem[FI93]{FI93} 
J. Friedlander and  H. Iwaniec,
Estimates for character sums, 
\emph{Proc. Amer. Math. Soc.} \textbf{119} (1993), 365--372.


\bibitem[Ha17]{Ha17}
B. Hanson,
Estimates for character sums with various convolutions,
\emph{Acta Arith.} \textbf{179} (2017), 133--146. 

 

\bibitem[HP20]{HP20}
B. Hanson and G. Petridis, 
Refined estimates concerning sumsets contained in the roots of unity, 
\emph{Proc. London Math. Soc.} \textbf{122} (2021), 353--358.



\bibitem[IK04]{IK04} 
H. Iwaniec and E. Kowalski, 
\emph{Analytic number theory},  
AMS Colloquium Publications, vol 53, AMS, Providence, RI, 2004.

\bibitem[IS87]{IS87} 
H.~Iwaniec and A.~S{\'a}rk{\"o}zy, 
On a multiplicative hybrid problem, 
\emph{J. Number Theory} \textbf{26} (1987), 89--95.

\bibitem[Ka71]{Ka71} A. A. Karatsuba, A certain arithmetic sum,
\emph{Soviet Math. Dokl.},  \textbf{12} (1971), 1172--1174.
(Transl. from \emph{Doklady Akad. Nauk. SSSR}).


\bibitem[Ka92]{Ka92}
A. A. Karatsuba,
The distribution of values of Dirichlet characters on additive sequences, 
\emph{Soviet Math. Dokl.} \textbf{44} (1992), 145--148 
 (Transl. from \emph{Doklady Akad. Nauk. SSSR}).

\bibitem[Ka93]{Ka93}
A. A. Karatsuba, 
\emph{Basic analytic number theory}, 
Translated from the second (1983) Russian edition and with a preface by M. B. Nathanson,  
Springer-Verlag, Berlin, 1993.




\bibitem[Ka00]{Ka00} A. A. Karatsuba, Weighted character sums,
\emph{Izv. Math.} \textbf{64}  (2000), 249--263 (Transl. from \emph{Izv. Ross. Akad. Nauk, Ser. Mat.}).

\bibitem[Ka08]{Ka08} A. A. Karatsuba, Arithmetic problems in the theory of Dirichlet characters, 
\emph{Russian Math. Surveys} \textbf{63} (2008) 641--690
  (Transl. from \emph{Uspekhi Mat. Nauk}). 
  
\bibitem[KMPS20]{KMPS20}
D. Koh, M. Mirzaei, T. Pham and C.-Y. Shen,  
Exponential sum estimates over prime fields,
\emph{Int. J. Number Theory} \textbf{16} (2020), 291--308. 

\bibitem[RSW19]{RSW19}
O. Roche-Newton, I. E. Shparlinski and A. Winterhof, 
Analogues of the Balog--Wooley decomposition for subsets of finite fields and character sums with convolutions,
\emph{Ann. Comb.} \textbf{23} (2019), 183--205.

\bibitem[SS22]{SS22}
T. Schoen and I. D. Shkredov,
Character sums estimates and an application to a problem of Balog, 
\emph{Indiana Univ. Math. J.} \textbf{71} (2022), 953--964.

\bibitem[SS17] {SS17}
I. D. Shkredov and I. E. Shparlinski,
On some multiple character sums, 
\emph{Mathematika} \textbf{63} (2017), 553--560.

\bibitem[SS18]{SS18}
I. D. Shkredov and I. E. Shparlinski, 
Double character sums over intervals and arbitrary sets,
\emph{Proc. Steklov Math. Inst.} \textbf{303} (2018), 239--258
(Transl. from \emph{Trudy Mat. Inst. Steklov}).
  

\bibitem[SV17]{SV17}
I. D. Shkredov and A. S. Volostnov, 
Sums of multiplicative characters with additive convolutions,
\emph{Proc. Steklov Math. Inst.} \textbf{296} (2017), 256--269
(Transl. from \emph{Trudy Mat. Inst. Steklov}).
  
\bibitem[Vi54]{Vi54}
I. M. Vinogradov,
\emph{Elements of number theory}, Translated by S. Kravetz,
Dover Publ., NY, 1954.


\bibitem[Vo18]{Vo18}
A. S. Volostnov, 
On double sums with multiplicative characters, 
\emph{Math. Notes} \textbf{104} (2018), 197--203
(Transl. from  \emph{Matem. Zametki}).


\bibitem[We48]{We48} 
A. Weil,
\emph{Sur les courbes alg\'ebriques et les vari\'et\'es qui s'en d\'eduisent},
Hermann \& Cie, Paris, 1948.



\bibitem[Xi23]{Xi23} 
P. Xi,
Bilinear forms with trace functions over arbitrary sets and applications to Sato--Tate,
\emph{Sci. China Math.}  \textbf{66} (2023), 2819--2834.

\end{thebibliography}

\end{document}